\newcommand{\comm}[1]{}
\newcommand{\comment}[1]{}
\newcommand{\ntop}[2]{\genfrac{}{}{0pt}{1}{#1}{#2}}
\def\R{\mathbb{R}}
\def\C{\mathbb{C}}
\def\Q{\mathbb{Q}}
\def\Z{\mathbb{Z}}
\def\N{\mathbb{N}}
\def\restriction#1#2{\mathchoice
              {\setbox1\hbox{${\displaystyle #1}_{\scriptstyle #2}$}
              \restrictionaux{#1}{#2}}
              {\setbox1\hbox{${\textstyle #1}_{\scriptstyle #2}$}
              \restrictionaux{#1}{#2}}
              {\setbox1\hbox{${\scriptstyle #1}_{\scriptscriptstyle #2}$}
              \restrictionaux{#1}{#2}}
              {\setbox1\hbox{${\scriptscriptstyle #1}_{\scriptscriptstyle #2}$}
              \restrictionaux{#1}{#2}}}
\def\restrictionaux#1#2{{#1\,\smash{\vrule height .8\ht1 depth .85\dp1}}_{\,#2}}
\newtheorem{theo}{\textsc{Theorem}}[section]
\newtheorem{thm}[theo]{\textsc{Theorem}}
\newtheorem{prop}[theo]{\textsc{Proposition}}
\newtheorem{lem}[theo]{\textsc{Lemma}}
\newtheorem{lemma}[theo]{\textsc{Lemma}}
\newtheorem{rem}[theo]{\textsc{Remark}}
\newtheorem{defi}[theo]{\textsc{Definition}}
\newtheorem{theoalph}{Theorem}
\newtheorem*{theorem}{\textsc{Theorem}}
\renewcommand\subsection{\@startsection{subsection}{2}%
  \z@{-.5\linespacing\@plus-.7\linespacing}{.5\linespacing}%
  {\normalfont\bfseries}}
\renewcommand\subsubsection{\@startsection{subsubsection}{3}%
  \z@{-.5\linespacing\@plus-.7\linespacing}{.5\linespacing}%
  {\normalfont\bfseries\itshape}}
\begin{document}

\selectlanguage{english}

\title[Weak mixing properties of i.e.t.'s \& translation flows]{Weak mixing properties of interval exchange transformations \& translation flows}     

\author{Artur \textsc{Avila} \& Martin \textsc{Leguil}}  

\thanks{A.A. was partially supported by the ERC Starting Grant Quasiperiodic.}

\date{}

\subjclass[2010]{37A05, 37A25, 37E35}

\address{Institut de Mathématiques de Jussieu - Paris Rive Gauche, CNRS UMR 7586,
Université Paris Diderot, Sorbonne Paris Cité
Sorbonnes Universités, UPMC Université Paris 06, F-75013, Paris, France \& IMPA,
Estrada Dona Castorina 110, 22460-320, Rio de Janeiro, Brazil.}

\email{artur@math.univ-paris-diderot.fr}

\address{Universit\'e Paris Diderot, Sorbonne Paris Cit\'e, Institut de Math\'ematiques de
Jussieu-Paris Rive Gauche, UMR $7586$, CNRS, Sorbonne Universit\'es, UPMC Université Paris $06$,
F-$75013$ Paris, France.}

\email{martin.leguil@imj-prg.fr}

\maketitle

\begin{abstract}
\noindent Let $d >1$. In this paper we show that for an irreducible permutation  $\pi$ which is not a rotation, the set of $[\lambda]\in \mathbb{P}_+^{d-1}$ such that the interval exchange transformation $f([\lambda],\pi)$ is not weakly mixing does not have full Hausdorff dimension. We also obtain an analogous statement for translation flows. In particular, it strengthens the result of almost sure weak mixing proved by G. Forni and the first author in \cite{AF1}. We adapt here the probabilistic argument developed in their paper in order to get some large deviation results. We then show how the latter can be converted into estimates on the Hausdorff dimension of the set of ``bad" parameters in the context of \textit{fast decaying} cocycles, following the strategy of \cite{AD}. 
\end{abstract}

\tableofcontents


\section*{Introduction}

An \textit{interval exchange transformation}, or i.e.t., is a piecewise order-preserving bijection $f$ of an interval $I$ on the real axis. More precisely, $I$ splits into a finite number of subintervals $(I_i)_{i=1,\dots,d}$, $d>1$, such that the restriction of $f$ to each of them is a translation. The map $f$ is completely described by a pair $(\lambda, \pi)\in \R_+^d \times \mathfrak{S}_d$: $\lambda$ is a vector whose coordinates $\lambda_i:=|I_i|$ correspond to the lengths of the subintervals, and $\pi$ a combinatorial data which  prescribes in which way the different subintervals are reordered after application of $f$. We will write $f=f(\lambda,\pi)$. In the following, we will mostly consider \textit{irreducible} permutations $\pi$, which we denote by $\pi \in \mathfrak{S}_d^0$; this somehow expresses that the dynamics is ``indecomposable''. Since dilations on $\lambda$ do not change the dynamics of the i.e.t., we will also sometimes use the notation $f([\lambda],\pi)$, with $[\lambda] \in \mathbb{P}_+^{d-1}$. For more details on interval exchange transformations we refer to Section \ref{sectionintervalexchangerenormalization}.

A \textit{translation surface} is a pair $(S,\omega)$ where $S$ is a surface and $\omega$ some nonzero Abelian differential defined on it. Denote by $\Sigma \subset S$ the set of zeros of $\omega$; its complement $S\backslash \Sigma$ admits an atlas such that transition maps between two charts are just translations (see Subsection \ref{translsur} for more details on translation surfaces). Interval exchange transformations can be seen as a discrete version of the geodesic flow on some translation surface, also called a \textit{translation flow}. The introduction of these objects was motivated by the study of the billiard flow on rational polygons, i.e., whose angles are commensurate to $\pi$; the relation between these problems is given by a construction called \textit{unfolding}, which associates a translation surface to such a polygon (see for instance \cite{Z2}) and makes the billiard flow into some translation flow.

In this paper, we are interested in the ergodic properties of interval exchange transformations on $d>1$ subintervals. It is clear that such transformations preserve the Lebesgue measure. In fact this is often the unique invariant measure: Masur in \cite{Ma1} and Veech in \cite{V2} have shown that if the permutation $\pi$ is irreducible, then for Lebesgue-almost every $[\lambda] \in \mathbb{P}^{d-1}_+$, the i.e.t.\ $f([\lambda],\pi)$ is uniquely ergodic. As a by-product of our methods, we will see here that in fact, the set of $[\lambda]$ such that $f([\lambda],\pi)$ is not uniquely ergodic does not have full Hausdorff dimension.\footnote{It was pointed out to us by J. Athreya and J. Chaika that more is true actually: the results of Masur \cite{Ma2} imply that the Hausdorff codimension of $[\lambda] \in \mathbb{P}^{d-1}_+$ such that $f([\lambda],\pi)$ is not uniquely ergodic is at least $1/2$.}

In another direction, Katok has proved that i.e.t.'s and suspension flows over i.e.t.'s with roof function of bounded variation are never mixing with respect to Lebesgue measure, see \cite{Ka}. Basically, what mixing expresses is that the position of a point at time $n$ is almost independent of its initial position when $n \geq 0$ is large. Let us recall that a measure-preserving transformation $f$ of a probability space $(X,m)$ is said to be \textit{weakly mixing} if for every pair of measurable sets $A, B \in X$, there exists a subset $J(A,B) \subset \N$ of density zero such that 
\begin{equation}\label{weakmm}
\lim\limits_{J(A,B)\not\ni n \to +\infty} m(f^{-n}(A) \cap B) = m(A) m(B).
\end{equation}
It follows from this definition that every mixing transformation is weakly mixing, and every weakly mixing transformation is ergodic.\footnote{Indeed, mixing holds when we can take $J(A,B)=\emptyset$ in (\ref{weakmm}). For ergodicity, if the set $A$ is $f$-invariant, the choice $B=A$ in (\ref{weakmm}) yields that $A$ has either full or zero measure.} 

From the previous discussion, it is therefore natural to ask whether a typical i.e.t.\ is weakly mixing or not; this point is more delicate except in the case where the permutation $\pi$ associated to the i.e.t.\ $f(\lambda,\pi)$ is a \textit{rotation} of $\{1,\dots,d\}$, i.e., $\pi(i+1)\equiv \pi(i) +1 \mod d$, for all $i \in \{1,\dots,d\}$. Indeed, in this case, the i.e.t.\ $f(\lambda,\pi)$ is conjugate to a rotation of the circle, hence it is not weakly mixing, for every $\lambda \in \R_+^d$.

It is a classical fact that any invertible measure-preserving transformation $f$ is weakly mixing if and only if it has \textit{continuous spectrum}, that is, the only eigenvalue of $f$ is $1$ and the only eigenfunctions are constants. To prove weak mixing we thus rule out the existence of non-constant measurable eigenfunctions.

Let us recall some previous advances in the problem of the prevalence of weak mixing among interval exchange transformations. Partial results in this direction had been obtained by Katok and Stepin \cite{KS1}, who proved weak mixing for almost all i.e.t.'s on 3 intervals. In \cite{V4}, Veech has shown that weak mixing holds for infinitely many irreducible permutations.

The question of almost sure weak mixing for i.e.t.'s was first fully answered by Forni and the first author in \cite{AF1}, where the following result is proved:

\begin{theorem}[Theorem A, Avila-Forni \cite{AF1}]
Let $\pi$ be an irreducible permutation of $\{1,\dots,d\}$ which is not a rotation. For Lebesgue almost every $\lambda \in \R_+^d$, the i.e.t.\ $f(\lambda,\pi)$ is weakly mixing.
\end{theorem}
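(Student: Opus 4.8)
The plan is to use the spectral characterization recalled above: $f(\lambda,\pi)$ is weakly mixing if and only if it has continuous spectrum, so it suffices to show that for Lebesgue-almost every $\lambda\in\R_+^d$ the only eigenvalue of $f(\lambda,\pi)$ is $1$, i.e.\ there is no $t\in\R\setminus\Z$ carrying a nonzero measurable eigenfunction $\varphi$ with $\varphi\circ f=e^{2\pi i t}\varphi$. The bridge between this spectral problem and the dynamics of renormalization is Veech's criterion. Running the Rauzy--Veech--Zorich algorithm produces a sequence of induced interval exchange transformations together with integer transition matrices whose products $Q^{(n)}$ form the renormalization cocycle acting on $\R^d$; applying the cocycle to the relevant height (suspension) data yields vectors $h^{(n)}$. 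Veech's criterion asserts that $t$ is an eigenvalue of $f(\lambda,\pi)$ precisely when $\dist(t\,h^{(n)},\Z^d)\to 0$ as $n\to\infty$. The problem thus reduces to showing that, for a.e.\ $\lambda$, no $t\notin\Z$ makes the transported vectors converge to the integer lattice.

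Next I would set up the ergodic theory underlying the cocycle. The Zorich acceleration of Rauzy--Veech induction carries an ergodic invariant probability measure equivalent to Lebesgue, and $Q^{(n)}$ is log-integrable, so Oseledets' theorem applies. One Lyapunov direction is tautological, has exponent zero, and accounts for the unavoidable eigenvalue $1$; it is the behaviour transverse to this direction that must be controlled. Since $\pi$ is irreducible and not a rotation, the cocycle is genuinely nontrivial on the complementary bundle. I would separate candidate eigenvalues into those for which $t\,h^{(n)}$ stays aligned with the tautological/invariant rational structure of the cocycle, which can be excluded directly using irreducibility together with unique ergodicity (Masur--Veech), and the remaining generic $t$, whose projection to the non-tautological part of the bundle is nonzero; the latter is the essential case.

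The core is the probabilistic argument of \cite{AF1}, which I would reproduce. The point is \emph{not} to treat each $t$ separately via Fubini --- eigenvalues form a countable, hence null, set, so that would be vacuous --- but to exhibit a single mechanism, working uniformly in $t$, that destroys the weak-stability condition. Concretely, one builds a prescribed finite word in the Rauzy moves whose transition matrix expands an entire cone of directions transverse to the tautological one and forces the corresponding vectors to be bounded away from $\Z^d$. By ergodicity (indeed mixing) of renormalization, this special word occurs along the orbit of a.e.\ $\lambda$ with positive frequency; a Borel--Cantelli argument then shows that for a.e.\ $\lambda$ it recurs infinitely often. Each recurrence precludes $\dist(t\,h^{(n)},\Z^d)\to0$ for \emph{every} nonzero $t$ in the relevant cone simultaneously, so no nontrivial eigenvalue can satisfy Veech's criterion, yielding weak mixing for a.e.\ $\lambda$.

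I expect the main obstacle to lie exactly in this last step: isolating the directions of the cocycle that are not uniformly expanded (the weak-stable directions) and constructing the combinatorial configuration that expands them, together with the quantitative exponential estimates needed to run the Borel--Cantelli argument. This must be done by an explicit analysis of admissible Rauzy paths and their matrices rather than by invoking positivity or simplicity of the full Lyapunov spectrum, and it is where the construction of \cite{AF1} concentrates its technical effort; the later sections of the present paper refine precisely these estimates into large-deviation bounds so as to control not merely the measure but the Hausdorff dimension of the exceptional set.
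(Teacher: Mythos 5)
Your general frame --- the spectral characterization, Veech's criterion, a renormalization cocycle, and the correct observation that the argument must be uniform in $t$ rather than a Fubini argument over eigenvalues --- agrees with the strategy of \cite{AF1} that this paper follows. But two things are wrong at the core. First, you never address whether the vector $(1,\dots,1)$ (the height datum relevant to the eigenvalue equation $\phi\circ f=e^{2\pi \mathrm{i} t}\phi$) actually lies in the subspace $H(\pi)$ carrying the nontrivial part of the cocycle. The proof splits on exactly this point: if $(1,\dots,1)\notin H(\pi)$ --- which includes every genus-one permutation that is not a rotation --- the Lyapunov-theoretic machinery says nothing, and one concludes instead by Veech's arithmetic result (Proposition \ref{propveech}): any $h$ satisfying the eigenvalue equation along a subsequence must have $h\cdot b^s\in\Z$ for every singularity $s$, while by Lemma \ref{easylemma} some $b^s$ then has $(1,\dots,1)\cdot b^s=\pm 1$, forcing $t\in\Z$. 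Your ``tautological zero-exponent direction'' does not substitute for this dichotomy. In the complementary case $g>1$, $(1,\dots,1)\in H(\pi)$, the proof crucially invokes the existence of at least two positive Lyapunov exponents of the cocycle restricted to $H(\pi)$ (Forni's theorem), an input you explicitly propose to avoid; the second exponent is what supplies the drift away from the origin.

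Second, your central mechanism cannot work. The cocycle is integral: if a vector $v$ lies within $\varepsilon$ of some $c\in\Z^d\setminus\{0\}$, then $A^{\underline{l}}\cdot v$ lies within $\|A^{\underline{l}}\|\varepsilon$ of $A^{\underline{l}}\cdot c\in\Z^d$. Hence no fixed Rauzy word, however expanding, can ``force the corresponding vectors to be bounded away from $\Z^d$'' for all $t$ simultaneously: vectors shadowing nonzero lattice points are immune to it. (Note also that if a single recurrence of a special word killed all candidate eigenvalues, ergodicity alone would finish the proof and your Borel--Cantelli step would be redundant.) This is precisely why \cite{AF1} must analyze the weak-stable lamination through the ``children'' process: pieces of the line $J_h=\mathrm{Span}(1,\dots,1)$ whose cocycle images come near a nonzero lattice point $c$ are translated back by $-c$ and continue to evolve, producing a branching family of lines of the form $M\cdot J_h-c$. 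The good words in the finite set $Z$ accomplish something much weaker than what you claim: they create no nontrivial children and push each surviving line a definite factor further from the origin. Weak mixing then follows not from recurrence of one word, but from the exponential decay of the survival probability of this branching process (Claim 3.7 of \cite{AF1}, Lemma \ref{keylemma} here) combined with a union bound over the countable family of lines $M\cdot J_h-c$; that countable union, not a single expanding configuration, is the actual source of uniformity in $t$.
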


They also obtain an analogous statement for translation flows.
\begin{theorem}[Theorem B, Avila-Forni \cite{AF1}]
For almost every translation surface $(S,\omega)$ in a given stratum of the moduli space of translation surfaces of genus $g > 1$, the translation flow on $(S,\omega)$ is weakly mixing in almost every direction.
\end{theorem}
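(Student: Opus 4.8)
The plan is to prove that, for the Masur--Veech measure on the given stratum, the vertical translation flow has no non-constant measurable eigenfunctions for almost every surface, and then to deduce the ``almost every direction'' formulation by a soft invariance argument. Since an invertible flow is weakly mixing if and only if it has continuous spectrum, it suffices to rule out nonzero eigenvalues. The flow on $(S,\omega)$ in direction $\theta$ is the vertical flow on the rotated surface $e^{-i\theta}\cdot(S,\omega)$; as the Masur--Veech measure is invariant under the circle action, Fubini reduces the statement ``for a.e.\ $(S,\omega)$ and a.e.\ direction $\theta$'' to the single statement ``for a.e.\ $(S,\omega)$ the vertical flow is weakly mixing''. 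I would then present the vertical flow as a special flow over the interval exchange $f(\lambda,\pi)$ read on a horizontal transversal, with roof function $\tau$ constant on each $I_i$ and equal to the height $h_i$, where $(\lambda,h)$ are the zippered-rectangle coordinates of $(S,\omega)$ and $h=(h_1,\dots,h_d)$. In these terms $s\in\R$ is an eigenvalue of the flow if and only if there is a measurable $u\colon I\to\C$ with $|u|\equiv 1$ and $u\circ f = e^{2\pi i s h_i}\,u$ on each $I_i$.

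Next I would renormalize. Applying Rauzy--Veech induction with Zorich acceleration produces the integral cocycle $B^{(n)}$, and since the area $\langle\lambda,h\rangle$ is preserved, the heights transform contragradiently, $h^{(n)}={}^{t}B^{(n)}h$; equivalently, the special Birkhoff sum of the roof over the $n$-th tower is exactly $h^{(n)}$. The eigenvalue equation pulls back to the analogous equation on the induced interval exchange with phase $s\,h^{(n)}$, and by a Veech-type criterion the existence of a measurable solution forces the fractional parts to vanish asymptotically, i.e.\ $\dist(s\,h^{(n)},\Z^d)\to 0$ as $n\to\infty$. Thus the whole problem reduces to showing that, for a.e.\ parameter, the only $s\in\R$ for which $\dist(s\,h^{(n)},\Z^d)\to0$ is $s=0$.

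This parameter-exclusion step is the core, and it runs exactly parallel to the exclusion of eigenvalues carried out for interval exchanges in the Avila--Forni theorem stated above, except that here one tracks the height vector under the transpose cocycle and the spectral parameter $s$ is real. The mechanism is probabilistic: because $\pi$ is not a rotation --- equivalently $g>1$ --- the Kontsevich--Zorich cocycle restricted to the relevant invariant subspace is non-degenerate, having at least two distinct Lyapunov exponents and no proper invariant sub-line, so that at each renormalization step the matrix $B^{(n)}$ displaces a given non-integral vector $s\,h^{(n)}$ away from $\Z^d$ in some coordinate with a definite probability. Combined with the renewal and mixing properties of Rauzy--Veech induction and a Borel--Cantelli argument, this rules out the shadowing condition $\dist(s\,h^{(n)},\Z^d)\to0$. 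To make the conclusion uniform over the continuum of parameters $s$, I would decompose $\R\setminus\{0\}$ into countably many dyadic scales, produce from the probabilistic estimate a full-measure set of surfaces on which no $s$ in a given scale is an eigenvalue, and then intersect over all scales.

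The hard part is precisely this probabilistic estimate: quantifying how frequently, and with what probability, the cocycle separates $s\,h^{(n)}$ from the integer lattice, with enough control that the Borel--Cantelli argument closes uniformly. This is where the non-rotation hypothesis is indispensable --- for a rotation the flow is conjugate to a linear flow on the torus and genuinely carries eigenvalues --- and where the delicate analysis of the distribution of the Rauzy--Veech matrices enters, namely the estimates that the present paper later sharpens into large-deviation and Hausdorff-dimension statements. Once the exclusion is established for almost every surface, the Fubini reduction of the first paragraph yields weak mixing in almost every direction for almost every surface, completing the proof.
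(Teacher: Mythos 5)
Your architecture is the same as the one behind the paper's treatment of this theorem (which follows \cite{AF1}): reduce to the vertical flow by rotation-invariance of the Masur--Veech measure plus Fubini, represent the vertical flow as a special flow over an i.e.t.\ with roof constant equal to the zippered-rectangle heights $h_i$, convert a putative eigenvalue $s$ via Veech's criterion (Theorem \ref{veechcriterion}) into the shadowing condition $\dist(s\,h^{(n)},\Z^d)\to 0$ along the renormalization, and then exclude this for a.e.\ parameter using the positivity of two Lyapunov exponents of the cocycle when $g>1$. Two standard ingredients are elided but are harmless: weak mixing requires ergodicity \emph{in addition to} the absence of nonzero eigenvalues (supplied a.e.\ by Masur--Veech unique ergodicity), and Veech's criterion needs the Rauzy orbit to visit the open set $U_{\mathfrak R}$ infinitely often (supplied a.e.\ by ergodicity of the renormalization).

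The genuine gap is in the step you yourself flag as the hard part, and the mechanism you propose for it would not close. A per-vector estimate of the form ``at each step the cocycle displaces $s\,h^{(n)}$ away from $\Z^d$ with definite probability'' followed by Borel--Cantelli and a union over dyadic scales of $s$ cannot work as stated: inside a single dyadic scale there are still uncountably many values of $s$, so measure-zero exceptional sets for each fixed $s$ cannot be summed, and the required uniformity is precisely what is missing. The device that makes the exclusion go through, both in \cite{AF1} and in this paper, is to run the process on the whole line $J_h=\mathrm{Span}(h)$ at once, modulo $\Z^d$: one considers the sets $\Gamma_\delta^m(J)=\{[\lambda]:\ J\cap W^s_{\delta,mN}([\lambda])\neq\emptyset\}$, tracks the \emph{children} of $J$ (translates $A^{\underline{l}}\cdot J-c$, $c\in\Z^d$, re-entering a $\delta$-neighborhood of the lattice), uses a finite set $Z$ of matrices of large measure for which only trivial children occur together with the drift coming from the second positive exponent, and proves that the survival probability decays --- in this paper exponentially, $\mu(\Gamma_\delta^m(J))\leq C e^{-\kappa m}\|J\|^{-\rho}$ (Proposition \ref{mainresultprob}). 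Working with the line handles all multiples $s\,h$ simultaneously, which is exactly the uniformity your scheme lacks; from there, measure zero for the bad set of $(h,[\lambda])$ plus your (correct) Fubini reduction yields the theorem, and the paper pushes the same estimate further to the Hausdorff-dimension statements (Theorems \ref{thmA} and \ref{wmixt}), where Fubini is no longer available and product covers must be built by hand.
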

Although translation flows can be seen as suspension flows over i.e.t.'s, the second result is not a direct consequence of the first one since the property of weak mixing is not invariant under suspensions and time changes.\\

Note that the previous results tell nothing about zero measure subsets of the moduli space of translation surfaces; in particular, the question of weak mixing was still open for translation flows on Veech surfaces, which are exceptionally symmetric translation surfaces associated to the dynamics of rational polygonal billiards. This problem was solved in \cite{AD} by Delecroix and the first author:

\begin{theorem}[Theorem 2, Avila-Delecroix \cite{AD}]
The geodesic flow in a non-arithmetic Veech surface is weakly mixing in almost every direction. Indeed, the set of exceptional directions has Hausdorff dimension less than one.
\end{theorem}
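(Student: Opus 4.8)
The plan is to reduce weak mixing to the absence of eigenvalues for the directional flow and then to bound the size of the exceptional set through the renormalization dynamics attached to the Veech surface. By the spectral criterion recalled above, the geodesic flow on $(S,\omega)$ in a direction $\theta$ is weakly mixing if and only if the associated directional translation flow $\phi^\theta$ has continuous spectrum; so it suffices to rule out, for most $\theta$, the existence of $s\in\R\setminus\{0\}$ and a non-constant measurable $\psi$ with $\psi\circ\phi^\theta_t=e^{2\pi i s t}\psi$.

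The main tool is renormalization. Since $(S,\omega)$ is a Veech surface, its $\mathrm{SL}(2,\R)$-orbit in the moduli space is closed and identified with $\mathrm{SL}(2,\R)/\Gamma$, where $\Gamma$ is the non-arithmetic Veech lattice. A direction $\theta$ then corresponds to a geodesic ray in this homogeneous space, and the excursions of the ray into the cusps of $\mathrm{SL}(2,\R)/\Gamma$ furnish a continued-fraction-like coding of $\theta$, generalizing the Gauss map attached to the modular surface. Over this coding one has a cocycle $A_n(\theta)$ acting on the relevant cohomology, together with the vector $h^{(n)}(\theta)$ of heights of the renormalized towers. Following Veech and \cite{AF1}, I would translate the existence of an eigenvalue $s$ into the asymptotic resonance condition that $\norm{s\,h^{(n)}_i(\theta)}_{\R/\Z}\to 0$ as $n\to\infty$; ruling out eigenvalues thus amounts to showing that these fractional parts stay bounded away from $0$ along a positive-density set of scales.

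To handle Lebesgue-almost every $\theta$, I would adapt the probabilistic argument of \cite{AF1}. Using the hyperbolicity of the geodesic flow on $\mathrm{SL}(2,\R)/\Gamma$ and the action of the cusp parabolics on cohomology, one shows that the renormalized quantities $s\,h^{(n)}(\theta)$, once projected off the tautological (Teichm\"uller-invariant) direction, are expanded and rotated by the cocycle $A_n$, which forces the fractional parts to oscillate and hence precludes the resonance. Non-arithmeticity is essential at this point: it guarantees that the cocycle acts non-trivially on the cohomological directions relevant to $s$, so that no arithmetic resonance can persist --- in the arithmetic, torus-cover case such resonances survive and produce genuine eigenfunctions. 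This already yields weak mixing for almost every direction.

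To upgrade the measure statement into the Hausdorff dimension bound, I would follow \cite{AD}. The coding by cusp excursions realizes the renormalization as a \emph{fast decaying} cocycle: since $\Gamma$ is a lattice, the measure of deep cusp excursions, equivalently of large return times and large cocycle norms, decays fast. The a.e. statement is then sharpened into large deviation estimates --- the proportion of coding words of length $n$ along which the non-resonance fails is exponentially small, uniformly --- and these are combined with the fast decay, which controls the diameters of the cylinders of directions against the growth of $A_n$, to run the covering argument of \cite{AD}. The exceptional set of $\theta$ is thereby covered by cylinders whose $\delta$-dimensional mass is summable for some $\delta<1$, proving that the non-weakly-mixing directions have Hausdorff dimension strictly less than one. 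The main obstacle, and the technical core, is precisely this quantitative interface: one must establish the exponential large deviation bound with enough uniformity and then balance the number and the sizes of the bad cylinders against the cocycle growth so that a subcritical exponent $\delta<1$ survives; verifying the fast decaying property from the cusp geometry of $\Gamma$ and the non-degeneracy underlying the cohomological expansion is where the hypotheses genuinely enter.
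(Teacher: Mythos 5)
This statement is not proved in the paper you were given: it is quoted there as Theorem 2 of \cite{AD}, and the paper only sketches the strategy of \cite{AD} (which it then adapts to i.e.t.'s and translation flows). Measured against that strategy, your architecture is right: reduce weak mixing to absence of eigenvalues, convert an eigenvalue into the resonance $\norm{B_n \cdot t h}_{\R^d/\Z^d} \to 0$ via the Veech criterion, code directions by a renormalization dynamics attached to the lattice $\Gamma$, establish fast decay and exponential large deviations, and run the covering argument to get Hausdorff dimension $<1$. All of that matches the actual scheme.

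The genuine gap is at the central step, the one that actually rules out the resonance. You propose to "adapt the probabilistic argument of \cite{AF1}", saying that the quantities $s\,h^{(n)}(\theta)$, \emph{projected off the tautological direction}, are "expanded and rotated" so that the fractional parts oscillate, and that non-arithmeticity enters because "the cocycle acts non-trivially on the cohomological directions relevant to $s$". But the drift mechanism of \cite{AF1} --- as this paper emphasizes in its outline and in the treatment of the case $g>1$, $(1,\dots,1)\in H(\pi)$ --- requires \emph{two} positive Lyapunov exponents in the subbundle carrying the line being tracked: the top exponent grows the segments, and the second positive exponent provides the drift that pushes the "children" away from the origin; this is precisely why the genus-one case needs a separate (Veech-type) argument. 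On a Veech surface the line $\R h$ sits inside the tautological plane, on which the cocycle acts through $\Gamma \subset \mathrm{SL}(2,\R)$ with exponents $+1$ and $-1$; there is no second positive exponent available there, so the \cite{AF1} drift cannot be invoked as you describe. This is exactly where \cite{AD} has to use non-arithmeticity in a quantitative way --- via the trace field $k \neq \Q$ and the rational/Galois structure it induces on $H^1(S,\Z)$ relative to the tautological plane --- to control which integer translates can arise in the children process and to recover a drift. Your appeal to non-arithmeticity is a placeholder rather than an argument, and the proposal offers no mechanism that would fail in the arithmetic case: every ingredient you describe concretely (cusp-excursion coding, fast decay, large deviations, the covering/dimension count) holds verbatim for arithmetic (square-tiled) Veech surfaces, for which the theorem is false. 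Hence the whole content of the proof lies in the step you left vague, and as written the argument cannot distinguish the two cases.
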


The proof of almost sure weak mixing in \cite{AF1} is based on some parameter exclusion; however this reasoning is not adapted to the particular case of translation flows on Veech surfaces. In \cite{AD}, the authors have developed another strategy to deal with the problem of prevalent weak mixing, based on considerations on Hausdorff dimension and its link to some property that we refer to as \textit{fast decay} in what follows (see Subsection \ref{fastdecaydefin} for the definition).\\

In the present paper, we improve the ``almost sure" statement obtained in \cite{AF1}; the proof we give owes much to the ideas developed in \cite{AF1} and \cite{AD}. Our main result is the following.

\begin{theoalph}
Let $d>1$ and let $\pi \in \mathfrak{S}_d^0$ be an irreducible permutation which is not a rotation; then the set of $[\lambda] \in \mathbb{P}^{d-1}_+$ such that $f([\lambda],\pi)$ is not weakly mixing has Hausdorff dimension strictly less than $d-1$.
\end{theoalph}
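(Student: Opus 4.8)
The plan is to carry out the two-step program announced in the abstract: first translate the failure of weak mixing into a non-escape condition for the Rauzy--Veech renormalization cocycle and prove \emph{large deviation} estimates for the associated exceptional events, and then feed these estimates into the \emph{fast decay} machinery of \cite{AD} to control the Hausdorff dimension. For the first step, I would use the continuous-spectrum characterization recalled above, so that it suffices to exclude non-constant measurable eigenfunctions. Rauzy--Veech induction attaches to $([\lambda],\pi)$ a renormalization dynamics together with an integer cocycle $A^{(n)} \in SL(d,\Z)$ acting on the height/homology data, and the Veech criterion, as sharpened in \cite{AF1}, shows that an eigenvalue $e^{2\pi i t}$ with $t \neq 0$ forces the orbit $t\,h^{(n)}$ of the height vectors to remain, infinitely often, within a small neighborhood of the lattice $\Z^d$ along the renormalization. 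Hence the set of non-weakly-mixing $[\lambda]$ is contained in the set for which some $t \neq 0$ produces such a ``resonant'' non-escaping orbit; since $\pi$ is not a rotation, the directions in which the cocycle fails to oscillate carry no eigenfunctions, so the resonance lives in the expanding part of the cocycle.

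The heart of the argument is the large deviation estimate, where the probabilistic reasoning of \cite{AF1} must be upgraded quantitatively. The renormalization is ergodic for a natural invariant measure and the Kontsevich--Zorich cocycle has the relevant positive Lyapunov exponents, so along a typical orbit the phases $\{t\,h^{(n)}_\alpha\}$ equidistribute away from $0$. I would adapt the oscillation and coupling argument of \cite{AF1} to produce constants $C,\kappa>0$ such that, at each renormalization scale $n$, the measure of parameters whose resonance survives up to time $n$ is at most $C e^{-\kappa n}$. The new input relative to \cite{AF1} is uniformity over the continuum of candidate eigenvalues $t$: at level $n$ the resonant $t$ are confined to $O(\|A^{(n)}\|)$ discrete cells (because $t\,h^{(n)}$ must lie near $\Z^d$ and $\|h^{(n)}\| \asymp \|A^{(n)}\|$), so one runs the exclusion cell by cell and sums.

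For the conversion to dimension I would invoke the mechanism of \cite{AD}. The simplex $\mathbb{P}^{d-1}_+$ is partitioned into renormalization cylinders indexed by induction itineraries, and by bounded distortion the diameter and the $(d-1)$-volume of a depth-$n$ cylinder are each comparable to explicit negative powers of $\|A^{(n)}\|$. The fast decay property of Rauzy--Veech induction controls the tail of the distribution of $\|A^{(n)}\|$, and hence the geometry of this decomposition. Combining fast decay with the exponential bound of the previous step and the polynomial count of resonant cells, the $s$-dimensional Hausdorff sum over the bad cylinders converges for some $s$ strictly below $d-1$; letting $n\to\infty$ shows that the exceptional set has vanishing $s$-dimensional Hausdorff content, whence $\dim_H < d-1$.

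The main obstacle is the large deviation step, specifically reconciling competing rates: the exponential smallness of the per-cell exclusion must beat both the polynomial growth $O(\|A^{(n)}\|)$ in the number of resonant $t$-cells and the exponential growth in the number of renormalization cylinders, and it must do so in a form compatible with the fast-decay tail bounds used in the final step. Extracting a clean exponential rate from the qualitative ``infinitely often'' oscillation statement of \cite{AF1}, while tracking its dependence on $t$ and on the cocycle norm, is the technical core of the work.
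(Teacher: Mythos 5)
Your global skeleton (Veech criterion, then a large-deviation estimate for the renormalization cocycle, then the fast-decay machinery of \cite{AD} to convert measure decay into a dimension bound) is indeed the paper's skeleton, but the step you yourself flag as the technical core --- uniformity over the eigenvalue parameter $t$ --- is where your proposal breaks, and the paper resolves it by a structurally different device that your argument lacks. You propose to confine the resonant $t$ at depth $n$ to $O(\|A^{(n)}\|)$ discrete cells and to run the measure exclusion cell by cell, summing. But $\|A^{(n)}\|$ grows \emph{exponentially} in $n$ (at the Lyapunov rate; the cocycle has only fast-decaying, not bounded, tails), so the number of cells is of order $e^{Ln}$, while the per-cell exclusion rate $e^{-\kappa n}$ produced by the Avila--Forni argument comes with a small $\kappa$ that has no reason to exceed $L$: the union bound diverges, and restricting first to the set where $\|A_n\|\le e^{(L+\varepsilon)n}$ does not help. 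The paper never discretizes in $t$. Following \cite{AF1}, it works with \emph{lines}: for $h=(1,\dots,1)$ one sets $J_h=\mathrm{Span}(h)$, which contains $th$ for every $t$ at once, and studies $\Gamma_\delta^m(J)=\{[\lambda]:\ J\cap W^s_{\delta,mN}([\lambda])\neq\emptyset\}$ for lines $J$ not through the origin. The key estimate (Proposition \ref{mainresultprob}) is $\mu(\Gamma_\delta^m(J))\le Ce^{-\kappa m}\|J\|^{-\rho}$, attached to a line and hence valid for all $t$ simultaneously; a parameter bad for \emph{some} $t$ with $th\notin\Z^d$ is shown to lie in $\bigcup_{n\ge 0}\bigcup_{J\in\mathcal{J}_h}\bigcap_{m\ge 0}T^{-n}\bigl(\Gamma_\delta^m(J)\bigr)$, where $\mathcal{J}_h=\{M\cdot J_h-c:\ M\in\mathrm{SL}(d,\Z),\ c\in\Z^d\}\cap\mathcal{J}$ is \emph{countable} and independent of $t$. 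Each set in this union has Hausdorff dimension at most $d-1-\min(\kappa,\alpha_1)$ by Theorem \ref{thm:HD}, and countable stability of Hausdorff dimension finishes the proof; no competition between $\kappa$ and the cocycle growth ever arises.

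A second gap: your reduction ``it suffices to exclude non-constant measurable eigenfunctions'' hides two cases that the resonance argument cannot reach. Eigenvalue $1$ with a non-constant eigenfunction is exactly non-ergodicity (i.e.\ $t\in\Z$, $th\in\Z^d$), which the Veech criterion with $th\notin\Z^d$ does not address; the paper disposes of it separately by proving that non-uniquely-ergodic parameters have dimension $<d-1$ (Theorem \ref{theouniq}), combining Veech's positivity criterion with the bound of Proposition \ref{escapingpoints} on the dimension of non-returning points. Moreover, the probabilistic drift argument needs at least two positive Lyapunov exponents ($g>1$) and applies only when $(1,\dots,1)\in H(\pi)$, since the relevant cocycle is the restriction of the Rauzy cocycle to $H(\pi)$. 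When $(1,\dots,1)\notin H(\pi)$ --- which covers in particular every non-rotation permutation of genus one, where your ``drift'' has nothing to push with --- the paper uses an entirely different argument (Theorem \ref{thm111bis}, adapting Veech: one finds a singularity class $s$ with $(1,\dots,1)\cdot b^s=\pm 1$, so any resonance forces $t\in\Z$). Your sentence that ``the directions in which the cocycle fails to oscillate carry no eigenfunctions'' is not a substitute for either of these steps.
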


We also get a similar statement for translation flows. Let $d>1$, and $\pi \in \mathfrak{S}_d^0$ which is not a \text{rotation;} we consider the translation flows which are parametrized by a pair $(h,[\lambda]) \in H(\pi) \times \mathbb{P}_+^{d-1}$, where $\mathrm{dim}(H(\pi)) =2g$ (we refer to Subsection \ref{invariantsubspace} for a definition). By \cite{AF1} we know that the set of $(h,[\lambda]) \in H(\pi) \times \mathbb{P}_+^{d-1}$ such that the associate flow is  weakly mixing has full measure. Here we obtain
\begin{theoalph}
The set of $(h,[\lambda]) \in H(\pi) \times \mathbb{P}_+^{d-1}$ such that the associate translation flow is not weakly mixing has Hausdorff dimension strictly less than $2g+d-1$.
\end{theoalph}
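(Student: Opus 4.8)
The plan is to run, for the suspension flow, the same renormalization-and-exclusion scheme that yields Theorem~A, the new feature being that the eigenvalue equation for the flow couples the height datum $h \in H(\pi)$ to the length datum $[\lambda]$; consequently the exclusion must be performed in the full product $H(\pi)\times\mathbb{P}_+^{d-1}$ and cannot be deduced from the i.e.t.\ statement by fixing $h$. First I would reduce weak mixing of the translation flow to the absence of nonzero real eigenvalues: since weak mixing is equivalent to continuous spectrum, it suffices to rule out, for every $s\neq 0$, an eigenfunction $u$ with $u\circ\phi_t=e^{2\pi i s t}u$. Realising the flow as the suspension of $f([\lambda],\pi)$ with the piecewise constant return time $r$ encoded by the heights $h$, such an eigenfunction restricts on the cross-section to a measurable $\psi$ with $|\psi|=1$ solving the twisted cohomological equation
\begin{equation}\label{eq:twisted}
\psi\circ f = e^{2\pi i s\, r}\,\psi,
\end{equation}
so that the flow is weakly mixing if and only if \eqref{eq:twisted} has no nonzero measurable solution for any $s\neq 0$.

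Next I would transport \eqref{eq:twisted} through Rauzy--Veech/Zorich renormalization. Writing $h^{(n)}$ for the renormalized height vectors, obtained by iterating the Kontsevich--Zorich cocycle on the $2g$-dimensional symplectic space $H(\pi)$, solvability of \eqref{eq:twisted} forces $\dist(s\,h^{(n)},\Z^{d})\to 0$ with the appropriate summability, exactly as in the i.e.t.\ case but with the transported heights replacing the transported lengths. This identifies the ``bad'' set of non-weakly-mixing parameters as the set of $(h,[\lambda])$ for which there is some $s\neq 0$ realising this convergence. As in \cite{AF1}, a resonant part, where $s\,h^{(n)}$ is asymptotically trapped by the lattice for combinatorial reasons, is disposed of using that $\pi$ is not a rotation, so that the cocycle acts non-trivially on $H(\pi)$; the main case is the genuine approach $\dist(s\,h^{(n)},\Z^{d})\to 0$.

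The heart of the argument is then quantitative. I would invoke the large deviation estimates for the Kontsevich--Zorich cocycle established in the body of the paper (the quantitative sharpening of the probabilistic argument of \cite{AF1}): for a fixed vector in $H(\pi)$, the measure of renormalization data along which $\dist(s\,h^{(n)},\Z^{d})$ stays anomalously small through scale $n$ --- rather than equidistributing modulo $\Z^{d}$ --- decays exponentially in $n$, uniformly in $s$ by the coupling device of \cite{AF1} that detects proximity to the lattice independently of the precise value of $s$. These estimates, combined with the fast decay property of the renormalization (Subsection~\ref{fastdecaydefin}), feed into the Hausdorff-dimension machinery of \cite{AD}: one covers the bad set by renormalization cylinders at depth $n$, whose number and diameters are controlled by fast decay while the large deviation bound damps their contribution, producing a Hausdorff estimate strictly below the ambient dimension $\dim\bigl(H(\pi)\times\mathbb{P}_+^{d-1}\bigr)=2g+d-1$.

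The main obstacle --- and the precise reason Theorem~B does not follow from Theorem~A --- is the coupling between the two factors: weak mixing of a suspension is not inherited from that of the base, so the exclusion must treat the cocycle on the $2g$-dimensional subspace $H(\pi)$ simultaneously with the projective length parameter. One must check that both the large deviation and the fast decay estimates survive in this enlarged space, that the extra $2g$ height directions contribute fully to the ambient dimension while the bad condition still costs a definite dimension deficit, and that the behaviour along the central Oseledets directions of the cocycle (where there is neither uniform expansion nor contraction) does not destroy that deficit. Verifying that the dimension gap persists through the product structure is where the argument requires the most care.
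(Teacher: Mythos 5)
Your reduction (flow eigenfunction $\Rightarrow$ twisted cohomological equation over the i.e.t.\ $\Rightarrow$ $\|B_n^R([\lambda],\pi)\cdot th\|_{\R^d/\Z^d}\to 0$ via Veech's criterion) and your choice of tools (the large deviation estimate of Proposition~\ref{mainresultprob}, fast decay, the dimension machinery of \cite{AD}) coincide with the paper's. But your proposal stops exactly where the proof of Theorem~B actually begins: you flag ``verifying that the dimension gap persists through the product structure'' as the delicate point and then leave it unresolved. That is a genuine gap, and it is the entire content of Subsection~\ref{proofweakmixtransl}. Fiberwise bounds are worthless here: a subset of $H(\pi)\times\mathbb{P}_+^{d-1}$ all of whose fibers $\mathcal{F}_h$ have dimension at most $d-1-\delta'$ can still have full dimension $2g+d-1$ (the paper's footnote recalls that $\mathrm{HD}(X\times Y)$ can exceed $\mathrm{HD}(X)+\mathrm{HD}(Y)$; graphs of functions already show that fiberwise bounds control nothing). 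So ``running the same exclusion in the enlarged space'' is not yet an argument: one must exhibit actual covers of the bad set $\mathcal{F}$ in the \emph{product}, and renormalization cylinders only live in the $[\lambda]$ factor.

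The paper supplies two concrete ingredients that are missing from your outline. First, a new large deviation estimate (Proposition~\ref{prop anom lyap behh}): the measure of $\{x:\|A_n(x)\|\geq e^{\widetilde L n}\}$ decays exponentially for any $\widetilde L$ above the top Lyapunov exponent; this is proved by an exponential-moment argument using bounded distortion and fast decay, and is \emph{not} the same statement as Proposition~\ref{mainresultprob}. Second, and crucially, local constancy of the covers in $h$: if $d(h,h')\leq e^{-C''m}$ and $[\lambda']$ is bad for $h'$ at step $m$ with tolerance $\delta/2$, then either the orbit lies in the anomalous-growth set $X_m$ (exponentially small by the first ingredient), or projecting the witness $w'\in J_{M,c}(h')$ orthogonally onto $J_{M,c}(h)$ perturbs each iterate $A_k\cdot w'$ by at most $\|A_k\|\varepsilon\leq\delta/2$, so $[\lambda']$ is bad for $h$ with tolerance $\delta$. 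Hence one cover, built for the center $h$ from Proposition~\ref{mainresultprob} and Theorem~\ref{thm:HD}, serves every $h'$ in a ball of radius $e^{-C''m}$. Finally one covers $H(\pi)\simeq\R^{2g}$ at scale $e^{-C''m}$ with $\sum_p(\delta^j_{m,p})^{2g}\leq K'$, subdivides each such $h$-ball into $O\bigl((\delta^j_{m,p}/r_k)^{2g}\bigr)$ balls whose diameter matches the diameter $r_k$ of each piece of the $[\lambda]$-cover, and computes
\begin{equation*}
\sum_{p,k,l}\mathrm{diam}\bigl(\mathcal{B}^{j,m,n,M,c,p,k}_l\times B^{m,n,M,c,h^j_{m,p}}_k\bigr)^{2g+d-1-\delta'}\ \lesssim\ \sum_p(\delta^j_{m,p})^{2g}\,\sum_k r_k^{\,d-1-\delta'}\ \leq\ KK',
\end{equation*}
which is what yields $\mathrm{HD}(\mathcal{F})\leq 2g+d-1-\delta'$. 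Without the anomalous-growth estimate and this exponentially fine, matched-diameter discretization of $H(\pi)$, your plan cannot produce any bound below the ambient dimension.
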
 

\textbf{Acknowledgements: }we would like to thank S\'ebastien Gouëzel for his very careful reading of a first version of this paper and many useful comments. The second author is also grateful to Julie D\'eserti and Harold Rosenberg for their constant support, and thanks \textit{\'Ecole polytechnique} and \textit{Réseau Franco-Brésilien de Mathématiques} for their financial support during several visits at Instituto de Matem\'atica Pura e Aplicada in Rio de Janeiro.

\section{Outline}

The property of weak mixing we are interested in concerns the dynamics of some i.e.t.\ $f$ on the interval $I$, or in other terms, \textit{phase space}. But $f$ is parametrized by $([\lambda],\pi) \in \mathbb{P}_+^{d-1} \times \mathfrak{S}_d^0$ and we will see that it is possible to define a dynamics on the \textit{space of parameters} as well. The so-called \textit{Veech criterion} gives a link between the property of weak mixing for phase space and the dynamics of some cocycle in parameter space.

Indeed, ``bad" parameters, i.e.\ corresponding to i.e.t.'s which are not weakly mixing, can be detected through a cocycle $(T,A)$ derived from the Rauzy \text{cocycle.} For a parameter $[\lambda]$, the weak-stable lamination $W^s([\lambda])$ is defined to be the set of vectors $h$ whose iterates under the cocycle get closer and closer to the lattice $\Z^d$. Veech criterion tells us that the weak-stable lamination $W^s([\lambda])$ associated to some ``bad" parameter $[\lambda]$ contains the element $(t,\dots,t)$ for some $t \in \R \backslash \Z$. Prevalent weak mixing can thus be obtained by ruling out intersections between $\mathrm{Span}(1,\dots,1)\backslash \Z^d$ and $W^s([\lambda])$ for typical $[\lambda]$. In \cite{AF1}, typical meant ``almost everywhere''; following the strategy developed by Delecroix and the first author in \cite{AD}, we show here that this holds actually for every $[\lambda]$ but for a set whose Hausdorff dimension is not maximal, which is stronger.

The study of the weak-stable lamination of $(T,A)$ was done in \cite{AF1}. To achieve this, and given $\delta >0$, $m \in \N$, the authors introduce the set $W_{\delta,m}([\lambda])$ of vectors $h$ with norm less than $\delta$ and such that the iterates $A_k([\lambda]) \cdot h$ remain small for the pseudo-norm $\|\cdot\|_{\R^d/ \Z^d}$ up to time $m$. 

The analysis is based on the following process: given a little segment $J$ near the origin, its image by the cocycle may again contain a point near some element $c \in \Z^d$. When $c=0$, the corresponding line is called a \textit{trivial child} of $J$. Else, we translate the image of $J$ by $-c$ to bring it back to the origin, and we call this new segment a \textit{non-trivial child} of $J$. Note that there may be several non-trivial children. The goal of the study is to show that for most $[\lambda]$, this process has finite life expectancy, that is, the family generated by a line is finite. 

A key ingredient in the ``local" analysis near the origin is the existence of two positive Lyapunov exponents for surfaces of genus at least $2$. For a fixed segment not passing through the origin, the biggest Lyapunov exponent is responsible for the growth of the length of its iterates by the cocycle, while the second biggest generates a drift that tends to kick them further away from the origin. 

The second part of the argument corresponds to a ``global" analysis which handles the fact that points near the origin may become close to another integer element under cocycle iteration. To address this point, Forni and the first author have developed a probabilistic argument. Choose a finite set $S$ of matrices such that for a typical parameter $[\lambda]$, the proportion of integers $k \geq 0$ such that $A(T^k([\lambda])) \in S$ is big. Since the set $S$ is finite, it is possible to ensure that when the matrix we apply belongs to $S$, the only potential child is trivial, and moreover, such an element kicks each line further from the origin by at least a given factor. It follows that for a typical $[\lambda]$ and for every line $J$, the process has finite life expectancy: for every $\delta >0$ and every line $J$, there exists some integer $m\geq 0$ such that $J \cap W_{\delta,m}^{s}([\lambda]) = \emptyset$.

In the present paper, we adapt the estimates obtained in \cite{AF1} to show that for every line $J$, the measure of the set of $[\lambda]$ such that the process survives up to time $m$ goes to zero exponentially fast with respect to $m$: if $\Gamma_{\delta}^m(J)$ denotes the set of $[\lambda]$ such that $J \cap W_{\delta,m}^s ([\lambda]) \neq \emptyset$, we get that $\mu(\Gamma_\delta^m(J)) \leq C e^{-\kappa m} \|J\|^{-\rho}$ for some constants $C, \kappa,\rho > 0$. We give here a proof of this fact using a general large deviations result obtained in \cite{AD}.

But by \cite{AD}, we know that the decay of the volumes $\mu(\Gamma_\delta^m(J))$ can be converted into a bound on the Hausdorff dimension of $\cap_m \Gamma_\delta^m(J)$. Since these sets are intimately related to the weak-stable lamination, we thus get a control of the Hausdorff dimension of ``bad'' parameters $[\lambda]$, which concludes.

For the case of translation flows, we use the parametrization $(h,[\lambda])$ that comes from Veech's zippered rectangle construction. For a fixed parameter $h$ and any integer $m\geq 0$, as for interval exchange transformations, we can estimate the measure of the set of ``bad'' parameters $[\lambda]$ at time $m$; this allows us to construct a cover of this set of parameters. Then, if we consider some $h'$ exponentially close to $h$, then for any ``bad'' parameter $[\lambda']$, we have two cases: either the iterates of the cocycle grow very fast (by some large deviations result, this happens very rarely), or $[\lambda']$ belongs to some piece of the cover constructed for $h$; in other terms, the previous cover can be taken locally constant with respect to $h$. Thus for each integer $m\geq 0$, we obtain a cover of ``bad'' parameters $(h,[\lambda])$ at time $m$, and we can then estimate the Hausdorff dimension of non weakly mixing parameters. 

\section{Background}\label{background}

As we have explained, the property of weak mixing for i.e.t.'s is related to the dynamics of a cocycle $(T,A)$ defined in parameter space. An important fact is that the map $T$ we consider has a property called \textit{bounded distortion}; indeed it is crucial for the probabilistic argument that we outlined, in particular to get some large deviations result. 
In this section, we will see that under some assumptions, it can be checked when the map $T$ is obtained by restriction to a simplex compactly contained in the projective space. Another important point is that the cocycle $(T,A)$ is \textit{fast decaying}; we will recall this notion and see how it can be used to give a bound on the Hausdorff dimension of certain sets. 

\subsection{Strongly expanding maps}\label{backstrong}

Let $(\Delta,\mu)$ be a probability space and
$T\colon\Delta \to \Delta$ a measurable transformation preserving the measure class of $\mu$. We say $T$ is {\it weakly
expanding }if there exists a partition (modulo $0$) $\{\Delta^{(l)}, \,l \in \Z\}$ of $\Delta$
into sets of positive measure, such that for all $l \in \Z$, $T$ maps $\Delta^{(l)}$
onto $\Delta$, $T^{(l)}:=\restriction{T}{\Delta^{(l)}}$ is invertible and the pull-back $(T^{(l)})^* \mu$ is equivalent
to $\restriction{\mu}{\Delta^{(l)}}$.

Let $\Omega$ be the set of all finite words with integer entries. Given $\underline{l}=l_0 \dots l_{n-1}\in \Omega$, we denote by $|\underline{l}|:=n$ its length; we also define
$\Delta^{\underline{l}}:=\bigcap\limits_{k=0}^{n-1} T^{-k} \Delta^{(l_k)}$ and $T^{\underline{l}}:=\restriction{T^n}{\Delta^{\underline{l}}}$.  In particular $\mu(\Delta^{\underline{l}})>0$ by weak expansiveness.

If $\underline{l} \in \Omega$, we denote $\mu^{\underline{l}}:=\frac {1} {\mu(\Delta^{\underline{l}})} T^{\underline{l}}_* (\restriction{\mu}{\Delta^{\underline{l}}})$. We say that $T$ is {\it strongly expanding} if for some $K>0$, 
\begin{equation}\label{2.2}
K^{-1} \leq \frac {d\mu^{\underline{l}}} {d\mu} \leq K,\quad \underline{l} \in \Omega.
\end{equation}
\begin{lem}\label{bdddistt}
Let $T$ be strongly expanding and $Y \subset \Delta$ with
$\mu(Y)>0$; the following bounded distortion property holds:
\begin{equation}\label{2.3}
K^{-2} \mu(Y) \leq \frac {T^{\underline{l}}_* (\restriction{\mu^{\underline{l}'}}{\Delta^{\underline{l}}})(Y)} {\mu(\Delta^{\underline{l}})} \leq K^2 \mu(Y),
\quad \underline{l},\underline{l}' \in \Omega.
\end{equation}
\end{lem}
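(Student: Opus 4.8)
The plan is to derive (\ref{2.3}) from the strong expansion hypothesis (\ref{2.2}) by invoking it twice: once for the word $\underline{l}'$ and once for the word $\underline{l}$, each invocation contributing one factor of $K$, so that the two combine to give the $K^{\pm 2}$ appearing in the statement. The guiding principle is that both $\mu^{\underline{l}'}$ and $\mu^{\underline{l}}$ differ from $\mu$ only through a Radon--Nikodym density confined to $[K^{-1},K]$, and that the pushforward by $T^{\underline{l}}$ respects inequalities between measures.

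First I would record that, by (\ref{2.2}) applied to $\underline{l}'$, for every measurable set $E$ one has $K^{-1}\mu(E\cap\Delta^{\underline{l}}) \leq \mu^{\underline{l}'}(E\cap\Delta^{\underline{l}}) \leq K\,\mu(E\cap\Delta^{\underline{l}})$; equivalently, as measures, $K^{-1}\restriction{\mu}{\Delta^{\underline{l}}} \leq \restriction{\mu^{\underline{l}'}}{\Delta^{\underline{l}}} \leq K\,\restriction{\mu}{\Delta^{\underline{l}}}$. I would then push this chain forward under $T^{\underline{l}}$; this preserves it, since whenever $\nu_1 \leq \nu_2$ as measures and $\phi$ is measurable one has $\phi_*\nu_1 \leq \phi_*\nu_2$, and pushforward commutes with multiplication by a positive scalar. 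The crucial identification is that the pushforward of the middle reference measure is, by the very definition of $\mu^{\underline{l}}$, equal to $\mu(\Delta^{\underline{l}})\,\mu^{\underline{l}}$, that is $T^{\underline{l}}_*(\restriction{\mu}{\Delta^{\underline{l}}}) = \mu(\Delta^{\underline{l}})\,\mu^{\underline{l}}$. Evaluating the resulting inequalities at $Y$ and dividing by $\mu(\Delta^{\underline{l}})$ yields $K^{-1}\mu^{\underline{l}}(Y) \leq \mu(\Delta^{\underline{l}})^{-1}\, T^{\underline{l}}_*(\restriction{\mu^{\underline{l}'}}{\Delta^{\underline{l}}})(Y) \leq K\,\mu^{\underline{l}}(Y)$. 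A final application of (\ref{2.2}), this time to the word $\underline{l}$, namely $K^{-1}\mu(Y) \leq \mu^{\underline{l}}(Y) \leq K\,\mu(Y)$, replaces $\mu^{\underline{l}}(Y)$ by $\mu(Y)$ at the cost of another factor $K$; combining the two bounds gives exactly (\ref{2.3}).

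I do not expect a substantive obstacle here, as the argument is essentially bookkeeping with measures. The only point deserving care is the interaction between restriction to $\Delta^{\underline{l}}$ and pushforward by $T^{\underline{l}}=\restriction{T^n}{\Delta^{\underline{l}}}$: one must check that $\restriction{\mu^{\underline{l}'}}{\Delta^{\underline{l}}}$ is concentrated on $\Delta^{\underline{l}}$, so that pushing it forward by the restricted map is unambiguous and lands on $\Delta$ (using that $T^{\underline{l}}$ maps $\Delta^{\underline{l}}$ onto $\Delta$ by weak expansiveness). Once this is clarified, the monotonicity of pushforward and its compatibility with scalars make the two-step comparison completely routine, and no further hypotheses beyond (\ref{2.2}) are needed.
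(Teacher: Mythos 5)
Your proof is correct. The paper itself states Lemma \ref{bdddistt} without proof (it is background material imported from \cite{AF1}), and your argument is precisely the natural one that fills this gap: apply (\ref{2.2}) to $\underline{l}'$ to trap $\restriction{\mu^{\underline{l}'}}{\Delta^{\underline{l}}}$ between $K^{\mp 1}\restriction{\mu}{\Delta^{\underline{l}}}$, push forward by $T^{\underline{l}}$ using monotonicity and the identity $T^{\underline{l}}_*(\restriction{\mu}{\Delta^{\underline{l}}}) = \mu(\Delta^{\underline{l}})\,\mu^{\underline{l}}$, then apply (\ref{2.2}) once more to $\underline{l}$; the two factors of $K$ compound to give (\ref{2.3}), and your observation that $\restriction{\mu^{\underline{l}'}}{\Delta^{\underline{l}}}$ is concentrated on $\Delta^{\underline{l}}$ (so the pushforward is well defined) is the right point of care.
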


\subsection{Projective transformations}\label{projjj}

We let $\mathbb{P}^{d-1}_+ \subset \mathbb{P}^{d-1}$ be the projectivization of $\R^d_+$. A {\it projective contraction} is the projectivization of some
matrix $B \in \mathrm{GL}(d,\R)$ with non-negative entries; in particular, the associate transformation takes $\mathbb{P}^{d-1}_+ $ into
itself.
The image of $\mathbb{P}^{d-1}_+$ by a projective contraction is
called a {\it simplex}. 

\begin{lem}[Lemma 2.1, Avila-Forni \cite{AF1}]\label{projectivestronglyexpanding}

Let $\Delta$ be a simplex compactly contained in $\mathbb{P}^{d-1}_+$
and $\{\Delta^{(l)}\}_{l \in \Z}$ a partition of $\Delta$ into sets of positive Lebesgue measure. Let
$T\colon\Delta \to \Delta$ be a measurable transformation  such that, for all $l \in
\Z$, $T$ maps $\Delta^{(l)}$ onto $\Delta$, $T^{(l)}:=\restriction{T}{\Delta^{(l)}}$ is invertible
and its inverse is the restriction of a projective contraction.  Then $T$ preserves
a probability measure $\mu$ which is absolutely continuous with respect to Lebesgue
measure and has a density which is continuous and positive in $\overline \Delta$.
Moreover, $T$ is strongly expanding with respect to $\mu$.

\end{lem}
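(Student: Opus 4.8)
The plan is to construct the invariant measure through the Ruelle--Perron--Frobenius (transfer) operator associated with the inverse branches of $T$, the whole argument resting on a single uniform \emph{bounded distortion} estimate for these branches. Write $h_l := (T^{(l)})^{-1} \colon \Delta \to \Delta^{(l)}$; by hypothesis each $h_l$ is the restriction of a projective contraction, i.e.\ of the projectivization of a matrix $B_l \in \mathrm{GL}(d,\R)$ with non-negative entries. Working in the affine chart $\{x \in \R^d_+ : \sum_i x_i = 1\}$ for $\mathbb{P}^{d-1}_+$, where the projective action of a matrix $B$ is $x \mapsto Bx/|Bx|$ with $|v| := \sum_i v_i$, the Jacobian of this action with respect to Lebesgue measure on the simplex is
\[
J_B(x) = \frac{|\det B|}{|Bx|^d}.
\]
Composing along a word $\underline{l} = l_0\cdots l_{n-1}$ gives $h_{\underline l}$, whose Jacobian is $J_{B_{\underline l}}$ with $B_{\underline l} = B_{l_0}\cdots B_{l_{n-1}}$; I would then define the transfer operator $(\mathcal{L}\phi)(x) = \sum_{l} J_{B_l}(x)\,\phi(h_l(x))$ acting on $C(\overline\Delta)$ and look for a fixed point $\mathcal{L}\rho = \rho$, which is precisely an invariant density $d\mu = \rho\,d\mathrm{Leb}$.

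The key step is to prove that the inverse branches have uniformly bounded distortion: there is $K_0>0$ with
\[
K_0^{-1} \le \frac{J_{B_{\underline l}}(x)}{J_{B_{\underline l}}(y)} \le K_0, \qquad x,y \in \Delta,\ \underline l \in \Omega.
\]
By the Jacobian formula this ratio equals $(|B_{\underline l}\,y|/|B_{\underline l}\,x|)^d$, so everything reduces to bounding $|B_{\underline l}\,y|/|B_{\underline l}\,x|$. Here the compact containment of $\Delta$ in $\mathbb{P}^{d-1}_+$ is decisive: it gives $\epsilon>0$ with $x_j \ge \epsilon$ for every ($L^1$-normalized) $x \in \overline\Delta$ and all $j$. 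Writing $|Bx| = \sum_j c_j x_j$ with $c_j := \sum_i B_{ij} \ge 0$ the column sums of $B = B_{\underline l}$, one gets $|By| \le \max_j c_j$ and $|Bx| \ge \epsilon\sum_j c_j \ge \epsilon \max_j c_j$, whence $|By|/|Bx| \le \epsilon^{-1}$ --- uniformly over all words, since the argument never refers to the length or to the specific matrices. The same computation bounds the gradient $\|\nabla \log|B_{\underline l}\,x|\|_\infty \le \epsilon^{-1}$, so each $\log J_{B_{\underline l}}$ is Lipschitz on $\overline\Delta$ with a constant $\le d/\epsilon$ independent of $\underline l$.

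With this in hand I would run the standard argument. Since $\{\Delta^{\underline l}\}_{|\underline l|=n}$ partitions $\Delta$ modulo $0$, one has $\sum_{|\underline l|=n}\mathrm{Leb}(\Delta^{\underline l}) = \mathrm{Leb}(\Delta)$, and bounded distortion upgrades this to the \emph{uniform} two-sided bound $\epsilon^d \le (\mathcal{L}^n\mathbf 1)(x) \le \epsilon^{-d}$ for all $x$ and $n$ (the infinitude of branches indexed by $\Z$ causes no trouble: the series $\mathcal{L}\mathbf 1 = \sum_l J_{B_l}$ converges uniformly because $J_{B_l} \le \epsilon^{-d}\,\mathrm{Leb}(\Delta^{(l)})/\mathrm{Leb}(\Delta)$ and the $\mathrm{Leb}(\Delta^{(l)})$ sum to $\mathrm{Leb}(\Delta)$, so $\mathcal{L}$ preserves $C(\overline\Delta)$). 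The uniform Lipschitz control of $\log J_{B_{\underline l}}$ then makes the family $\{\mathcal{L}^n\mathbf 1\}$ equi-Lipschitz, so by Arzelà--Ascoli the Cesàro averages $\rho_N := N^{-1}\sum_{n<N}\mathcal{L}^n\mathbf 1$ are precompact in $C(\overline\Delta)$; any uniform limit $\rho$ satisfies $\mathcal{L}\rho = \rho$ (because $\mathcal{L}\rho_N - \rho_N = N^{-1}(\mathcal{L}^N\mathbf 1 - \mathbf 1) \to 0$ uniformly) and inherits the bounds $\epsilon^d \le \rho \le \epsilon^{-d}$. After normalization, $d\mu = \rho\,d\mathrm{Leb}$ is the desired $T$-invariant probability measure with continuous, strictly positive density on $\overline\Delta$.

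Finally, strong expansion is a direct computation from the same ingredients. By the change of variables $h_{\underline l}$ the density of $T^{\underline l}_*(\restriction{\mu}{\Delta^{\underline l}})$ at $x$ is $\rho(h_{\underline l}(x))\,J_{B_{\underline l}}(x)$, so
\[
\frac{d\mu^{\underline l}}{d\mu}(x) = \frac{\rho(h_{\underline l}(x))\,J_{B_{\underline l}}(x)}{\mu(\Delta^{\underline l})\,\rho(x)};
\]
bounding $\rho$ above and below by positive constants, $J_{B_{\underline l}}(x)$ by $\epsilon^{\pm d}\,\mathrm{Leb}(\Delta^{\underline l})/\mathrm{Leb}(\Delta)$ via distortion, and $\mu(\Delta^{\underline l})$ by $\mathrm{Leb}(\Delta^{\underline l})$ times the density bounds, all the $\mathrm{Leb}(\Delta^{\underline l})$ factors cancel and one reads off a uniform constant $K$ with $K^{-1}\le d\mu^{\underline l}/d\mu \le K$. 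I expect the only genuinely delicate point to be the one already isolated: securing the distortion bound \emph{uniformly over the infinite family of inverse branches} and promoting the invariant density from merely $L^\infty$ to continuous on the closure --- both of which are resolved by the compact containment of $\Delta$, through the coordinatewise lower bound $x_j \ge \epsilon$ and the ensuing uniform Lipschitz estimate on $\log J_{B_{\underline l}}$.
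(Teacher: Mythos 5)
Your proposal is correct, and there is nothing in this paper to compare it against line by line: the lemma is imported verbatim as Lemma 2.1 of Avila--Forni \cite{AF1} and not reproved here, and your argument is essentially the original one, since your iterates $\mathcal{L}^n\mathbf{1}$ are exactly the densities of $T^n_*\mathrm{Leb}$, so your Ces\`aro/Arzel\`a--Ascoli construction is the standard limit-of-pushforwards argument phrased through the transfer operator. The individual steps all check out --- the Jacobian formula $J_B(x)=|\det B|/|Bx|^d$ on the normalized simplex, the uniform distortion bound $|B\,y|/|B\,x|\le \epsilon^{-1}$ valid for every product of non-negative invertible matrices (which is precisely where compact containment of $\Delta$ in $\mathbb{P}^{d-1}_+$ enters), the resulting two-sided bounds $\epsilon^{d}\le \mathcal{L}^n\mathbf{1}\le \epsilon^{-d}$ and equi-Lipschitz control, and the cancellation of the $\mathrm{Leb}(\Delta^{\underline{l}})$ factors in the strong-expansion ratio.
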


\subsection{Cocycles} \label {cocycle}

Let $(\Delta,\mu)$ be a probability space. A {\it cocycle} is a pair $(T,A)$, where
$T\colon\Delta \to \Delta$ and $A\colon\Delta \to \mathrm{GL}(d,\R)$ are measurable maps; it can be viewed as a linear skew-product $(x,w) \mapsto (T(x),A(x) \cdot w)$ on
$\Delta \times \R^d$.  If $n\geq 0$ we have $(T,A)^n=(T^n,A_n)$,  where
\begin{equation*}
A_n(x):=A(T^{n-1}(x)) \cdots A(x).
\end{equation*}
We say that $(T,A)$ is \textit{integral} if $A(x) \in \mathrm{GL}(d,\Z)$ for $\mu$-almost every $x \in \Delta$.

Assume that $T\colon\Delta \to \Delta$
is strongly expanding with respect to a partition $\{\Delta^{(l)}\}_{l \in \Z}$ of $\Delta$, and that the $\sigma$-algebra of $\mu$-measurable sets is generated ($\mathrm{mod}\ 0$) by the $\Delta^{\underline{l}}$'s. For $n \geq 0$, we define $\mu_n:=\frac{1}{n} \sum_{k=0}^{n-1} T_*^{k} \mu$ and take $\nu$ a weak-star limit of $(\mu_n)$. Then $\nu$ is an ergodic probability measure which is invariant by $T$. 

Given $B \in \mathrm{GL}(d,\R)$, we define $\|B\|_0:=\max \{\|B\|,\|B^{-1}\|\}$. The cocycle $(T,A)$ is \textit{log-integrable} if 
\begin{equation}\label{uniform}
\int_\Delta \ln \|A(x)\|_0 d\nu(x)<\infty.
\end{equation}

We say that $(T,A)$ is {\it locally constant} if for all $l\in \Z$, $\restriction{A}{\Delta^{(l)}}$ is a constant $A^{(l)}$. In this
case,  for all ${\underline l}\in \Omega$, $\l=l_1\dots l_n$, we set
\begin{equation*}
A^{\underline{l}}:=A^{(l_n)} \cdots A^{(l_1)}.
\end{equation*}

\subsection{Fast decay}\label{fastdecaydefin}

Let $(\Delta,\mu)$ be a probability space. Assume that $T$ is weakly expanding with respect to a partition $\{\Delta^{(l)}\}_{l \in \Z}$ of $\Delta$ and that the cocycle $(T,A)$ is locally constant. As in \cite{AD}, $T$ is \textit{fast decaying} if there exist $C_1 >0$, $\alpha_1 >0$ such that
\begin{equation}\label{fastdecay1}
\sum\limits_{\mu(\Delta^{(l)}) \leq \varepsilon} \mu(\Delta^{(l)}) \leq C_1 \varepsilon^{\alpha_1},\quad 0 < \varepsilon < 1,
\end{equation}
and we say that $A$ is \textit{fast decaying} if there exist $C_2 >0$, $\alpha_2 >0$ such that
\begin{equation}\label{fastdecay2}
\sum\limits_{\|A^{(l)}\|_0 \geq n} \mu(\Delta^{(l)}) \leq C_2 n^{-\alpha_2}.
\end{equation}
In particular, fast decay of $A$ implies that the cocycle $(T,A)$ is log-integrable. If both $T$ and $A$ are fast decaying we say that the \textit{cocycle} $(T,A)$ is fast decaying.

\subsection{Hausdorff dimension}

Let $X$ be a subset of a metric space $M$. Given $d \in \R_+$, its $d-$dimensional Hausdorff measure is defined as follows:
\begin{equation}
\mu_d(X)=\lim\limits_{\varepsilon \to 0}\inf\limits_{\{U_i^\varepsilon\}} \sum\limits_{i} \mathrm{diam}(U_i)^d,
\end{equation}
where the infimum is taken over all countable covers $\{U_i^\varepsilon\}$ of $X$ such that $\mathrm{diam}(U_i^\varepsilon) < \varepsilon$ for all $i$.

\begin{defi}
The \textit{Hausdorff dimension} of $X$ is the unique value $d=:\mathrm{HD}(X) \in \R_+ \cup \{\infty\}$ such that $\mu_{d'}(X)=0$ if $d' > d$ and $\mu_{d'}(X)=\infty$ if $d' < d$.
\end{defi}

\subsection{Fast decay \& Hausdorff dimension}\label{fastdecayhausdorffdim}

Let $\Delta \Subset \mathbb{P}_+^{d-1}$ be a simplex, and assume that $T\colon\Delta \to
\Delta$ satisfies the hypotheses of Lemma \ref{projectivestronglyexpanding}. 

\begin{thm}[Theorem 27, Avila-Delecroix \cite{AD}] \label{thm:HD}
Assume that $T$ is fast decaying and take $\alpha_1 >0$ as in (\ref{fastdecay1}). For $n \geq 1$, let $X_n \subset \Delta$ be
a union of $\Delta^{\underline{l}}$ with $|{\underline{l}}|=n$,
and define $X:=\liminf\limits_{n \to \infty} X_n$.  If
\begin{equation*}
\delta:= \limsup_{n \to \infty} - \frac {1} {n} \ln \mu(X_n) > 0,
\end{equation*}
then
\begin{equation*}
\mathrm{HD}(X) \leq d-1-\min(\delta,\alpha_1) < d-1.
\end{equation*}
\end{thm}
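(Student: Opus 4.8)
The plan is to bound $\mathrm{HD}(X)$ by producing, for every $s>d-1-\min(\delta,\alpha_1)$, economical covers of $X$ by the cylinders constituting the $X_n$, and letting $n\to\infty$. First I would reduce to a fixed generation. Writing $X=\liminf_n X_n=\bigcup_{N\geq 1}Y_N$ with $Y_N:=\bigcap_{n\geq N}X_n$, one has $Y_N\subset X_n$ for every $n\geq N$ and $\mathrm{HD}(X)=\sup_N\mathrm{HD}(Y_N)$ by countable stability; so it suffices to cover each $Y_N$ by the level-$n$ cylinders $\Delta^{\underline l}$, $\underline l\in L_n$, whose union is $X_n$, and to estimate $\sum_{\underline l\in L_n}\mathrm{diam}(\Delta^{\underline l})^s$ as $n\to\infty$. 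Two preliminary facts make this admissible: since by Lemma \ref{projectivestronglyexpanding} the density of $\mu$ is continuous and positive on the compact $\overline\Delta$, one has $\mu\asymp\mathrm{Leb}$; and since each inverse branch is a projective contraction sending $\Delta$ into $\Delta\Subset\mathbb{P}^{d-1}_+$, Birkhoff's theorem yields a uniform contraction of the Hilbert metric, hence (the Hilbert and Euclidean metrics being comparable on $\overline\Delta$) a uniform exponential bound $\sup_{|\underline l|=n}\mathrm{diam}(\Delta^{\underline l})\to 0$. Thus the cylinder covers are fine and $\mathcal{H}^s(Y_N)\leq\liminf_{n\to\infty}\sum_{\underline l\in L_n}(\text{covering cost of }\Delta^{\underline l})$.

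The core of the argument is the covering cost of a single cylinder, where the projective (non-conformal) nature of $T$ enters. Up to bounded factors $\Delta^{\underline l}$ is comparable to a box with transverse widths $\sigma_1(\underline l)\geq\cdots\geq\sigma_{d-1}(\underline l)$, so $\mathrm{diam}(\Delta^{\underline l})\asymp\sigma_1$ and, by $\mu\asymp\mathrm{Leb}$, $\mu(\Delta^{\underline l})\asymp\prod_j\sigma_j$. For $s$ slightly below $d-1$ the cheapest cover uses balls of radius the smallest width $w(\underline l):=\sigma_{d-1}(\underline l)$; there are $\asymp\mu(\Delta^{\underline l})\,w(\underline l)^{-(d-1)}$ of them, so the cost is
\[
\mu(\Delta^{\underline l})\,w(\underline l)^{-(d-1-s)}
\]
(for smaller $s$ one covers at a coarser transverse scale, with an analogous expression). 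Everything thus reduces to controlling
\[
\Sigma_n(s):=\sum_{\underline l\in L_n}\mu(\Delta^{\underline l})\,w(\underline l)^{-(d-1-s)} .
\]

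I would split this sum according to whether a cylinder is ``round'' or ``thin''. When $\Delta^{\underline l}$ is comparable to a ball one has $w(\underline l)\asymp\mu(\Delta^{\underline l})^{1/(d-1)}$, so its cost is $\asymp\mu(\Delta^{\underline l})^{s/(d-1)}$; summing these over $L_n$ and exploiting the Markov product structure with the bounded distortion of Lemma \ref{bdddistt} (so that $\mu$ behaves like a Gibbs measure of dimension $d-1$), the decay $\mu(X_n)\leq C e^{-\delta' n}$, valid for every $\delta'<\delta$, propagates to a contribution that vanishes once $s>d-1-\delta$. The thin cylinders are exactly those whose construction involves branches $\Delta^{(l)}$ of small measure, and here the fast decay hypothesis (\ref{fastdecay1}) is used: it bounds the total $\mu$-mass carried by small branches by $C_1\varepsilon^{\alpha_1}$, which limits how much the factors $w(\underline l)^{-(d-1-s)}$ can inflate $\Sigma_n(s)$ and forces the surviving thin contribution to vanish once $s>d-1-\alpha_1$. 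Combining the two regimes gives $\Sigma_n(s)\to 0$ for every $s>d-1-\min(\delta,\alpha_1)$, whence $\mathcal{H}^s(Y_N)=0$ for all $N$, hence $\mathrm{HD}(X)\leq s$; letting $s\downarrow d-1-\min(\delta,\alpha_1)$ finishes the proof.

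The hard part is the geometric step converting the measure estimate into the diameter estimate, i.e. controlling the eccentricity of the deep cylinders $\Delta^{\underline l}$. In a genuinely conformal system one would have $w(\underline l)\asymp\mu(\Delta^{\underline l})^{1/(d-1)}$ for every $\underline l$ and only the exponent $\delta$ would appear; the projective maps are merely conformal on average, and the delicate point is to show that the eccentricity accumulated along a word $\underline l$ is governed by the small-measure branches it meets, so that it is precisely the fast-decay exponent $\alpha_1$ of (\ref{fastdecay1}) that caps the gain. Making this quantitative — relating the transverse width $w(\underline l)$ of a composition of projective contractions to the single-step data controlled by (\ref{fastdecay1}), uniformly in $n$ through Lemma \ref{bdddistt} — is the technical heart, and is what produces the minimum $\min(\delta,\alpha_1)$ in the final bound.
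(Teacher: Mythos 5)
You should note at the outset that this paper contains no proof of Theorem \ref{thm:HD}: it is imported as a black box from Avila--Delecroix \cite{AD} (their Theorem~27), so your attempt must be judged on its own terms. Much of your scaffolding is sound: the reduction to covering $Y_N:=\bigcap_{n\geq N}X_n\subset X_n$ by generation-$n$ cylinders, the uniform bound $\sup_{|\underline{l}|=n}\mathrm{diam}(\Delta^{\underline{l}})\leq C\theta^{n}$ with $\theta<1$ from Hilbert-metric contraction, the comparison $\mu\asymp\mathrm{Leb}$ from Lemma \ref{projectivestronglyexpanding}, and the covering cost $\asymp\mu(\Delta^{\underline{l}})\,w(\underline{l})^{-(d-1-s)}$ at the scale of the smallest width $w(\underline{l})$ of a convex cylinder. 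But what remains is not a proof: the quantitative step you yourself call ``the technical heart'' is left open, and the two structural claims on which your round/thin splitting rests are both incorrect.

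First, thin cylinders are \emph{not} ``exactly those whose construction involves branches $\Delta^{(l)}$ of small measure''. Anisotropy accumulates under composition of finitely many fixed branches of comparable measure as soon as the derivative cocycle has a gap between its extreme Lyapunov exponents (the typical situation, and the relevant one for Rauzy renormalization), so deep cylinders are generically exponentially eccentric while meeting no small branch at all; ``fast decay caps eccentricity'' cannot be the mechanism. In fact eccentricity has a soft resolution you almost wrote down: by convexity $w(\underline{l})\geq c\,\mu(\Delta^{\underline{l}})/\mathrm{diam}(\Delta^{\underline{l}})^{d-2}$, so with $\epsilon:=d-1-s$ the cost of \emph{any} cylinder is at most $C\,\mu(\Delta^{\underline{l}})^{1-\epsilon}\theta^{n(d-2)\epsilon}$; the true enemy is small measure, not eccentric shape. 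Second, the round part is \emph{not} controlled by $\delta$ alone: the hypothesis $\sum_{\underline{l}\in L_n}\mu(\Delta^{\underline{l}})\leq e^{-\delta' n}$ gives no bound on $\sum_{\underline{l}\in L_n}\mu(\Delta^{\underline{l}})^{1-\epsilon/(d-1)}$, because the function $x\mapsto x^{1-\epsilon/(d-1)}$ is concave and $L_n$ may contain very many tiny terms. Nothing in your argument excludes $e^{Mn}$ disjoint round cylinders of measure $e^{-(M+\delta')n}$ each; these satisfy the measure constraint for every $M$, yet $\sum\mu^{1-\epsilon/(d-1)}=\exp\bigl(n\bigl(\tfrac{\epsilon}{d-1}(M+\delta')-\delta'\bigr)\bigr)$ blows up once $M$ is large, and such configurations do occur for legitimate fast-decaying partitions (e.g.\ $\mu(\Delta^{(l)})\asymp l^{-2}$, where the correct threshold is $\epsilon/(d-1)<\alpha_1=1/2$, not $\epsilon<\delta$). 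Ruling this out is exactly where (\ref{fastdecay1}) must be used, via a Chernoff-type bound on the distribution of generation-$n$ cylinder measures, based on $\sum_l\mu(\Delta^{(l)})^{1-\alpha}<\infty$ for $\alpha<\alpha_1$ together with the submultiplicativity $\mu(\Delta^{\underline{l}\,\underline{l}'})\leq K\mu(\Delta^{\underline{l}})\mu(\Delta^{\underline{l}'})$ coming from Lemma \ref{bdddistt}; so $\alpha_1$ is needed precisely in the regime you assign to $\delta$. Finally, even after these repairs, each comparison loses a uniform constant per iterate ($K$, the exponential moment above, the constant in $w\geq c\mu/\mathrm{diam}^{d-2}$), and these accumulate to factors $e^{Cn\epsilon}$; the resulting estimate is $\mathrm{HD}(X)\leq d-1-\epsilon_0$ for some small $\epsilon_0>0$ --- enough for the applications in this paper, but not the stated exponent $d-1-\min(\delta,\alpha_1)$, whose derivation requires a more careful argument, as in \cite{AD}.
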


\section{Interval exchange transformations and renormalization algorithms}\label{sectionintervalexchangerenormalization}

Following the notations of \cite{MMY} and \cite{AGY}, we recall some classical notions of the theory of interval exchange transformations (see also \cite{Via}, \cite{V4}). In particular, we give the definition of Rauzy induction and renormalization procedures. The rough idea is the following: given an i.e.t.\ $f$, we look at the first-return map induced by $f$ on some subintervals that are chosen smaller and smaller. This allows us to accelerate the dynamics in phase space in order to capture asymptotic behaviors such as weak mixing. But each return map is itself an i.e.t., and the corresponding changes of parameters define a dynamics in parameter space. We also recall some classical notions on translation surfaces and translation flows, which are a continuous counterpart to i.e.t's. By Veech's ``zippered rectangles" construction, it is possible to suspend any i.e.t.\ to a flow on a translation surface, which is obtained by gluing rectangles on each subinterval and performing certain identifications between them. In the space of ``zippered rectangles'', the extension of Rauzy induction can be seen as a cocycle, called the Rauzy cocycle. Similarly it is possible to define a cocycle over Rauzy renormalization map, and we will see that by considering first-return maps to a simplex compactly contained in $\mathbb{P}_+^{d-1}$, it induces a cocycle $(T,A)$ with better properties: indeed, $T$ has bounded distortion and $(T,A)$ is fast decaying. 

\subsection{Interval exchange transformations}\label{ietts}

Let $\mathcal{A}$ be an alphabet on $d > 1$ letters, and let $I \subset \R$ be an interval having $0$ as left endpoint. We choose a partition $\{I_\alpha\}_{\alpha\in \mathcal{A}}$ of $I$ into subintervals which we assume to be closed on the left and open on the right. In the following, we denote $\R_+^{\mathcal{A}}\sim \R_+^{d}$ and $\mathbb{P}_+^{\mathcal{A}}:=\mathbb{P}(\mathbb{R}_+^{\mathcal{A}})\sim \mathbb{P}_+^{d-1}$. An \textit{interval exchange transformation}, or \textit{i.e.t.}, is a bijection of $I$ defined by two data:
\begin{enumerate}
\item A vector $\lambda=(\lambda_\alpha)_{\alpha\in \mathcal{A}} \in \R_+^{\mathcal{A}}$ whose coordinates correspond to the lengths of the subintervals: for every $\alpha \in \mathcal{A}$, $\lambda_\alpha:=|I_\alpha|$. We also define $|\lambda|:=\sum\limits_{\alpha \in \mathcal{A}} \lambda_\alpha$, so that $I=I^\lambda:=[0,|\lambda|)$.
\item A pair $\pi=\left(
             \begin{array}{c}
               \pi_t \\
               \pi_b \\
             \end{array}
           \right)$ of bijections $\pi_*\colon \mathcal{A} \to \{1,\dots,d\}$, $*=t,b$, prescribing in which way the subintervals $I_\alpha$ are ordered before and after the application of the map. The bijections $\pi_*$ can be viewed as one top and one bottom rows, where the elements of $\mathcal{A}$ are displayed in the order $(\pi_*^{-1}(1),\dots,\pi_*^{-1}(d))$:
    \begin{equation*}
    \pi = \left(
            \begin{array}{cccc}
              \alpha_1^t & \alpha_2^t & \dots & \alpha_d^t \\
              \alpha_1^b & \alpha_2^b & \dots & \alpha_d^b \\
            \end{array}
          \right).
    \end{equation*}
We sometimes identify $\pi$ with its \textit{monodromy invariant} $\tilde{\pi}:=\pi_b \circ \pi_t^{-1}$ and call it a \textit{permutation}. We denote by $\mathfrak{S}(\mathcal{A})$ the set of all such permutations, and by $\mathfrak{S}^0(\mathcal{A})$ the subset of {\it irreducible} ones, that is $\pi \in \mathfrak{S}^0(\mathcal{A})$ if and only if for every $1 \leq k < d$, the set of the first $k$ elements in the top and in the bottom rows do not coincide. 
\end{enumerate}
Given a permutation $\pi \in \mathfrak{S}^0(\mathcal{A})$, and for $*=t,b$, we define linear maps $\Omega_\pi^*\colon \R^{\mathcal{A}} \to \R^{\mathcal{A}}$ by 
\begin{equation}\label{definitionomega}
(\Omega^*_\pi(\lambda))_\alpha:=\sum\limits_{\pi_*(\beta) < \pi_*(\alpha)} \lambda_\beta,\quad \lambda \in \R^{\mathcal{A}},\ \alpha \in {\mathcal A}.
\end{equation}
Set $\Omega_\pi:=\Omega_\pi^b - \Omega_\pi^t$. For any $\lambda \in \R_+^{\mathcal{A}}$, the interval exchange transformation $f=f(\lambda,\pi)$ is the map associated with the translation vector
$w:=\Omega_\pi(\lambda)$; in other terms,
\begin{equation*}
f(x):=x+w_\alpha,\quad x \in I_\alpha.
\end{equation*}
Two i.e.t.'s obtained one from another by a dilation on the length parameter $\lambda$ have the same dynamical behavior; therefore, one can projectivize $\lambda$ to $[\lambda]\in \mathbb{P}_+^{\mathcal{A}}$ and consider $f([\lambda],\pi)$. 

\subsection{Translation surfaces}\label{translsur}

A \textit{translation surface} is a compact Riemann surface $S$ endowed with some nonzero Abelian differential $\omega$. Let $\Sigma \subset S$ be the set of zeros, or \textit{singularities} of $\omega$. For each $s \in \Sigma$, denote by $\kappa_s$ the order of $s$ as a zero. 
For any $p \in S \backslash \Sigma$, there exists a chart defined in the neighborhood of $p$ such that in these coordinates, $\omega$ simply writes down as $dz$. The family of such charts on $S \backslash \Sigma$ forms an atlas for which transition maps correspond to translations in $\R^2$. Moreover, every singularity $s$ has a punctured neighborhood isomorphic via a holomorphic map to a finite cover of a punctured disk in $\R^2$, and such that in this chart $\omega$ becomes $z^{\kappa_s} dz$.

The form $|\omega|$ defines a flat metric on $S$ with conical singularities at $\Sigma$. The total angle around a singularity $s$ is $2 \pi (\kappa_s+1)$. The total area of the surface is given by $\int |\omega|^2 < \infty$. \textit{Normalized} translation surfaces are those for which $\int |\omega|^2 = 1$.

For each
$\theta \in \R / 2\pi\Z$, the \textit{directional flow} in the
direction $\theta$ is the flow $\phi^{S,\theta}_{t}\colon S \rightarrow S$
obtained by integration of the unique vector field $X_\theta$ such
that $\omega(X_\theta) = e^{\mathrm{i} \theta}$.  In local charts, $\omega = dz$ and we have $\phi^{S,\theta}_t(z) = z + t e^{\mathrm{i}\theta}$ for small
$t$, so directional flows are also called {\it translation flows}. The \textit{(vertical) flow} of $(S,\omega)$ is the flow
$\phi^{S,\pi/2}$. Translation flows are not defined at the zeros of
$\omega$ and hence not defined for all positive
times on backward
orbits of the singularities. The flows $\phi^{S,\theta}_t$
preserve the volume
form $\frac{\mathrm{i}}{2} \omega \wedge \overline{\omega}$ and the ergodic
properties of translation
flows we will discuss are with respect to this measure. 

Let us recall some results which hold for an arbitrary translation surface:
the directional flow is minimal except for a countable set of directions
\cite{Ke}, the translation flow is uniquely ergodic except for a
set of directions of Hausdorff dimension at most $1/2$
\cite{KMS}, \cite{Ma2}, and the translation flow
is not mixing in any direction \cite {Ka}.

It is known that for a genus one translation surface, translation flows are never weakly mixing. The same property holds for the branched coverings of genus one translation surfaces, which
form a \textit{dense} subset of translation surfaces.  However, Forni and the first author \cite{AF1} have proved that for
\textit{almost every}
translation surface of genus at least two, the translation flow
is weakly mixing in almost every direction.

Considering translation surfaces of genus $g$ modulo isomorphism, one gets the \textit{moduli space} of Abelian differentials, denoted by $\mathcal{M}_g$. It is possible to define a flow $(g_t)_{t \in \R}$ on this space, called the \textit{Teichmüller flow}: its action on an Abelian differential $\omega=\Re(\omega) +\mathrm{i} \Im(\omega)$ is given by $g_t \cdot \omega:=e^t \Re(\omega) +\mathrm{i} e^{-t}\Im(\omega)$. By fixing the order of zeros as an unordered list $\kappa$ of positive integers, one defines \textit{strata} $\mathcal{M}_{g,\kappa} \subset \mathcal{M}_g$. We also denote by $\mathcal{M}_{g,\kappa}^1 \subset \mathcal{M}_{g,\kappa}$ the hypersurface corresponding to normalized surfaces. Each stratum is an orbifold of finite dimension. For each connected component $\mathcal{C}$ of some $\mathcal{M}_{g,\kappa}^1$, there is a well-defined probability measure $\mu_{\mathcal{C}}$ in the Lebesgue measure class which is invariant by the Teichmüller flow; it is called the \textit{Masur-Veech measure}. The implicit measure-theoretical notions above refer to this measure.

Given a translation surface $(S,\omega)$, a \textit{separatrix} is a geodesic line for the metric $|\omega|$ starting from a singularity in $\Sigma$; it is called a \textit{saddle connection} when the separatrix connects two singularities and has its interior disjoint from $\Sigma$. The first-return map to some separatrix for the vertical flow is an interval exchange transformation. Conversely, it is possible to suspend any interval exchange transformation to a translation flow by a construction we now briefly recall.

\subsection{Veech's ``zippered rectangles'' construction}\label{invariantsubspace}

Let $\mathcal{A}$ be an alphabet on $d>1$ letters. Given $(\lambda,\pi) \in \R_+^{\mathcal{A}}\times \mathfrak{S}^0(\mathcal{A})$, Veech's construction allows to suspend the i.e.t.\ $f(\lambda,\pi)$ to a suspension flow on a translation surface $S$. We consider the convex cone 
$T^+(\pi):=\left\{\tau\in \R^\mathcal{A}\ \left|\ \sum\limits_{\pi_t(\beta)\leq k} \tau_\beta>0\ \text{and}\ \sum\limits_{\pi_b(\beta)\leq k} \tau_\beta<0,\ 1 \leq k \leq d-1\right\} \right.$.
Let $\Omega_\pi$ be the map defined in Subsection \ref{ietts}, and set $H^+(\pi):=-\Omega_\pi(T^+(\pi))\subset \R_+^{\mathcal{A}}$. 
For any $h \in H^+(\pi)$ and $\alpha\in \mathcal{A}$, we define rectangles above (resp. below) top (resp. bottom) subintervals by:
$$
R_\alpha^t:=(w_\alpha^t,w_\alpha^t+\lambda_\alpha) \times [0,h_\alpha]\quad \text{and}\quad R_\alpha^b:=(w_\alpha^b,w_\alpha^b+\lambda_\alpha) \times [-h_\alpha,0],
$$ where $w^*:=\Omega_\pi^*(\lambda)$, $*=t,b$. 
The surface $S$ is obtained by performing appropriate gluing operations on the union of those rectangles. The initial i.e.t.\ $f$ corresponds to the first-return map to the transversal $I$ of the vertical flow in $S$. 

Denote by $\tilde \pi$ the monodromy invariant, and let $\sigma_\pi$ be the permutation on $\{0,\dots,d\}$
defined by
\begin{equation*}
\sigma_\pi(i):=\left \{ \begin{array}{ll}
\tilde \pi^{-1}(1)-1, & i=0,\\[5pt]
d, & i=\tilde \pi^{-1}(d),\\[5pt]
\tilde \pi^{-1}(\tilde \pi(i)+1)-1, & i \neq 0, \tilde \pi^{-1}(d).
\end{array}
\right.
\end{equation*}
For $i,j\in \{0,\dots,d\}$, we denote $i\sim j$ if $i$ and $j$ belong to the same orbit under $\sigma_\pi$. Then the quotient space $\Sigma(\pi):=\{0,\dots,d\}/\sim$ is in one-to-one correspondence with the set of singularities of $S$. 
Moreover, for every $s\in \Sigma(\pi)$,  let $b^s \in \R^d$ be the vector defined by
\begin{equation*}
b^s_i:=\chi_s(i-1)-\chi_s(i), \quad 1 \leq i \leq d,
\end{equation*}
where $\chi_s$ denotes the characteristic function of $s$. We define $\Upsilon(\pi):=\{b^s,\ s \in \Sigma(\pi)\}$. 
 Let us recall the following result.

\begin{lem}[Veech, \cite{V4}, \textsection 5]\label{easylemma}
Let $\pi \in \mathfrak{S}^0(\mathcal{A})$. For each $s \in \Sigma(\pi)$, one has
\begin{equation*}
(1,\dots,1) \cdot b^s=\left \{ \begin{array}{ll}
1, & 0 \in s,\ d \not\in s,\\[5pt]
-1, & 0 \not\in s,\ d \in s,\\[5pt]
0, & \mathrm{otherwise}.
\end{array}
\right.
\end{equation*}
\end{lem}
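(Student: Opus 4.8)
The plan is to observe that the left-hand side is simply the total sum of the coordinates of $b^s$, and that by the very definition $b^s_i = \chi_s(i-1) - \chi_s(i)$ this sum telescopes. Concretely, I would write
\begin{equation*}
(1,\dots,1)\cdot b^s = \sum_{i=1}^d b^s_i = \sum_{i=1}^d \bigl(\chi_s(i-1) - \chi_s(i)\bigr),
\end{equation*}
and then reindex the first contribution by $j=i-1$ to get $\sum_{j=0}^{d-1}\chi_s(j)$, leaving the second as $\sum_{i=1}^d \chi_s(i)$. All the intermediate terms $\chi_s(1),\dots,\chi_s(d-1)$ occur in both sums and cancel, so that
\begin{equation*}
(1,\dots,1)\cdot b^s = \chi_s(0) - \chi_s(d).
\end{equation*}

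It then remains to carry out a one-line case analysis on the membership of the endpoints $0$ and $d$ in the orbit $s$, recalling that $\chi_s$ is the characteristic function of $s \subset \{0,\dots,d\}$, so each of $\chi_s(0), \chi_s(d)$ equals $0$ or $1$. If $0 \in s$ and $d \notin s$, the difference is $1-0=1$; if $0 \notin s$ and $d \in s$, it is $0-1=-1$; and in the remaining configuration, where $0$ and $d$ lie on the same side (either $\{0,d\}\subset s$ or $\{0,d\}\cap s = \emptyset$), we have $\chi_s(0)=\chi_s(d)$ and the difference vanishes. This reproduces exactly the three cases of the statement, the \emph{otherwise} alternative being the one that absorbs both same-side configurations.

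There is essentially no genuine obstacle here: the computation is purely formal. The only points requiring a little care are the index shift in the telescoping sum and the observation that the \emph{otherwise} case really does merge the two symmetric possibilities. In particular, the elaborate combinatorial data of $\pi$ — the permutation $\sigma_\pi$ and the equivalence $\sim$ defining the orbits — enters only through the fact that $s$ is a well-defined subset of $\{0,\dots,d\}$, so none of the finer structure of $\pi$ needs to be unwound to obtain the result.
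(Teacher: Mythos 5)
Your proof is correct: the sum $\sum_{i=1}^d b^s_i$ telescopes to $\chi_s(0)-\chi_s(d)$, and the three-way case analysis on membership of $0$ and $d$ in $s$ gives exactly the stated values. The paper itself states this lemma as a citation to Veech without reproducing an argument, and your telescoping computation is precisely the elementary verification that is being left implicit, so there is nothing to add or compare.
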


We define $H(\pi):=\Omega_\pi(\R^{\mathcal{A}})=\ker(\Omega_\pi)^{\perp}$.

\begin{prop}
$H(\pi)$ coincides with the annulator of the subspace of $\R^{\mathcal{A}}$ spanned by $\Upsilon(\pi)$:
\begin{equation*}
h \in H(\pi) \Longleftrightarrow h \cdot b^s=0,\ s \in \Sigma(\pi).
\end{equation*}
Moreover, $\dim(H(\pi))= d+1-\#\Sigma(\pi)=2 g(\pi)$, where $g(\pi)$ is the genus of the suspension surface $S$, and $H(\pi)$ can be identified with the absolute homology $H_1(S,\mathbb{R})$ of $S$.
\end{prop}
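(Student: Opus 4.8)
The plan is to reduce the asserted equivalence to a single identity of subspaces, and then to establish that identity by combining a direct computation with the topology of the suspension surface $S$. The definition $H(\pi)=\Omega_\pi(\R^{\mathcal{A}})=\ker(\Omega_\pi)^{\perp}$ rests on $\Omega_\pi$ being skew-symmetric, which I would first check from (\ref{definitionomega}): the entry $(\Omega_\pi)_{\alpha\beta}$ equals $1$ or $0$ according to whether $\pi_b(\beta)<\pi_b(\alpha)$, minus the same quantity for $\pi_t$, and since $\pi_t,\pi_b$ are bijections this is antisymmetric in $(\alpha,\beta)$ and vanishes on the diagonal. Skew-symmetry gives $\Omega_\pi(\R^{\mathcal{A}})=(\ker\Omega_\pi)^{\perp}$. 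As $\{h:\ h\cdot b^s=0,\ s\in\Sigma(\pi)\}=\mathrm{Span}(\Upsilon(\pi))^{\perp}$, taking orthogonal complements turns the equivalence $h\in H(\pi)\Leftrightarrow h\cdot b^s=0$ into the identity $\ker\Omega_\pi=\mathrm{Span}(\Upsilon(\pi))$, which is what I would prove.

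First I would show the inclusion $\mathrm{Span}(\Upsilon(\pi))\subseteq\ker\Omega_\pi$, that is $\Omega_\pi b^s=0$ for each $s\in\Sigma(\pi)$. Writing $b^{\{j\}}$ for the vector with $i$-th entry $\chi_{\{j\}}(i-1)-\chi_{\{j\}}(i)$, so that $b^s=\sum_{j\in s}b^{\{j\}}$ with $b^{\{j\}}=e_{j+1}-e_j$ for an interior index while $b^{\{0\}}=e_1$ and $b^{\{d\}}=-e_d$, I would compute each $\Omega_\pi b^{\{j\}}$ from the entry formula and sum over the $\sigma_\pi$-orbit $s$. The three cases in the definition of $\sigma_\pi$ are arranged exactly so that the term produced by $b^{\{j\}}$ is matched and cancelled by the one produced by $b^{\{\sigma_\pi(j)\}}$, with the special values $\sigma_\pi(0)$ and $\sigma_\pi(\tilde\pi^{-1}(d))=d$ handling the boundary indices; running once around the orbit the contributions telescope to zero. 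I expect this bookkeeping, matching the top/bottom orderings $\pi_t,\pi_b$ against the combinatorics of $\sigma_\pi$ at the ends $0$ and $d$, to be the main technical obstacle.

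To promote the inclusion to an equality I would compare dimensions. The linear map $\partial\colon f\mapsto(f(i-1)-f(i))_{1\le i\le d}$ on functions over $\{0,\dots,d\}$ has kernel equal to the constants, and $\mathrm{Span}(\Upsilon(\pi))=\partial(V)$ where $V$ is the $\#\Sigma(\pi)$-dimensional space of functions constant on each $\sigma_\pi$-orbit; since $V$ contains the constants, $\dim\mathrm{Span}(\Upsilon(\pi))=\#\Sigma(\pi)-1$, the only relation among the $b^s$ being $\sum_s b^s=\partial\mathbf{1}=0$. It therefore remains to see that $\mathrm{rank}\,\Omega_\pi=d+1-\#\Sigma(\pi)$, equivalently $\dim\ker\Omega_\pi\le\#\Sigma(\pi)-1$.

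For this last point I would use Veech's zippered rectangle model of $S$ from Subsection \ref{invariantsubspace}. The surface is closed with exactly $\#\Sigma(\pi)$ cone points, one per $\sigma_\pi$-orbit; identifying $\R^{\mathcal{A}}$ with the relative homology $H_1(S,\Sigma;\R)$ through the rectangle cores, the matrix whose rows are the $b^s$ is the connecting boundary map $\partial_*\colon H_1(S,\Sigma;\R)\to\widetilde{H}_0(\Sigma;\R)$, so the long exact sequence of the pair gives $H_1(S;\R)=\ker\partial_*=\mathrm{Span}(\Upsilon(\pi))^{\perp}$, of dimension $d-(\#\Sigma(\pi)-1)=d+1-\#\Sigma(\pi)=2g$. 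Under the same identification $\Omega_\pi$ represents the intersection form of $S$ in the rectangle basis, and a standard computation in this model (as in \cite{V4}) shows its rank equals $\dim H_1(S;\R)=2g=d+1-\#\Sigma(\pi)$, so $\dim\ker\Omega_\pi=\#\Sigma(\pi)-1$. Combined with the inclusion above this yields $\ker\Omega_\pi=\mathrm{Span}(\Upsilon(\pi))$, whence $H(\pi)=\mathrm{Span}(\Upsilon(\pi))^{\perp}=H_1(S;\R)$ and $\dim H(\pi)=d+1-\#\Sigma(\pi)=2g$, as claimed. The delicate ingredient throughout is the faithful translation of the gluing data into the boundary map $\partial_*$ and the intersection form; once that dictionary is in place the remaining linear algebra is routine.
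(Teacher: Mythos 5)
The paper never proves this proposition: it appears in the background Subsection \ref{invariantsubspace} as a recollection of Veech's zippered-rectangle theory (the neighbouring Lemma \ref{easylemma} is explicitly credited to \cite{V4}, \S 5), so there is no internal argument to compare yours against — you have supplied a proof where the authors supply a citation. Your skeleton is the standard one and it is sound. Skew-symmetry of $\Omega_\pi$ holds exactly as you say, so the statement does reduce to $\ker\Omega_\pi=\mathrm{Span}(\Upsilon(\pi))$. Your telescoping claim for the inclusion $\Omega_\pi b^s=0$ is correct and can be carried out in two lines: identifying $\alpha$ with $i=\pi_t(\alpha)$, so that $(\Omega_\pi)_{ij}=\mathbf{1}_{\tilde\pi(j)<\tilde\pi(i)}-\mathbf{1}_{j<i}$, one gets $\sum_{j<i}b^s_j=\chi_s(0)-\chi_s(i-1)$, while $\sum_{j:\,\tilde\pi(j)<\tilde\pi(i)}b^s_j=\sum_{k=1}^{\tilde\pi(i)-1}\bigl(\chi_s(\tilde\pi^{-1}(k)-1)-\chi_s(\tilde\pi^{-1}(k))\bigr)$ telescopes to the same quantity, because $\sigma_\pi$-invariance of $s$ gives $\chi_s(\tilde\pi^{-1}(k))=\chi_s(\tilde\pi^{-1}(k+1)-1)$ for $k\leq d-1$ and $\chi_s(0)=\chi_s(\tilde\pi^{-1}(1)-1)$; subtracting, $(\Omega_\pi b^s)_i=0$. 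Your dimension count $\dim\mathrm{Span}(\Upsilon(\pi))=\#\Sigma(\pi)-1$ via the difference operator $\partial$ is also correct.

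The step you should tighten is the rank bound. As phrased, ``$\Omega_\pi$ represents the intersection form of $S$ in the rectangle basis'' is not literally meaningful: the intersection form is defined on $H_1(S;\R)$, not on the relative group $H_1(S,\Sigma;\R)$ spanned by your rectangle cores. The precise statement behind it — and the actual content of the computation you outsource to \cite{V4} — is either that $(\Omega_\pi)_{\alpha\beta}$ is the algebraic intersection number of the dual \emph{vertical} cycles $\theta_\alpha,\theta_\beta\in H_1(S\setminus\Sigma;\R)$, whose pairing is the pullback of the symplectic form on $H_1(S;\R)$ under the natural surjection $H_1(S\setminus\Sigma;\R)\to H_1(S;\R)$ and hence has rank $2g$; or, equivalently, that under the identifications $\R^{\mathcal{A}}\cong H^1(S,\Sigma;\R)$ and $\R^{\mathcal{A}}\cong H_1(S,\Sigma;\R)$ the map $\Omega_\pi$ becomes the composition $H^1(S,\Sigma;\R)\to H^1(S;\R)\cong H_1(S;\R)\hookrightarrow H_1(S,\Sigma;\R)$, whose kernel is spanned by the coboundaries $b^s$ and whose image is the copy of $H_1(S;\R)$ you identified as $\ker\partial_*$. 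Verifying this dictionary is a computation of the same kind (and difficulty) as your telescoping step, and it is the only genuinely nontrivial point left open in your write-up; citing Veech for it is defensible — the paper does exactly that for the whole proposition — but in a self-contained proof it is the part that must actually be carried out.
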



\subsection{Rauzy classes} \label {rauzy c}

Let $\pi \in \mathfrak{S}^0(\mathcal{A})$. We denote by $\alpha(t)$ (resp.
$\alpha(b)$) the last element of the top (resp. bottom) row.
We define two bijections of $\mathfrak{S}^0(\mathcal{A})$ as follows; the reason why we consider these transformations will become clear in the next subsection. The {\it top} operation $\underline{t}$ maps $\pi$ to the permutation
\begin{equation*}
\underline{t}(\pi)=\left(
                \begin{array}{cccccccc}
                    \alpha_1^t & \dots & \alpha_{k-1}^t & \alpha_k^t & \alpha_{k+1}^t & \dots & \dots & \alpha(t)\\
                    \alpha_1^b & \dots & \alpha_{k-1}^b & \alpha(t) & \alpha(b) & \alpha_{k+1}^b & \dots & \alpha_{d-1}^{b}\\
                \end{array}
             \right),
\end{equation*}
while the {\it bottom} operation $\underline{b}$ maps $\pi$ to
\begin{equation*}
\underline{b}(\pi)=\left(
                \begin{array}{cccccccc}
                    \alpha_1^t & \dots & \alpha_{k-1}^t & \alpha(b) & \alpha(t) & \alpha_{k+1}^t & \dots & \alpha_{d-1}^t\\
                    \alpha_1^b & \dots & \alpha_{k-1}^b & \alpha_{k}^b & \alpha_{k+1}^b & \dots & \dots & \alpha(b)\\
                \end{array}
             \right).
\end{equation*}
We define a \textit{Rauzy class} to be a minimal non-empty subset of $\mathfrak{S}^0(\mathcal{A})$ which is
invariant under $\underline{t}$ and $\underline{b}$. Given a Rauzy class  $\mathfrak{R}$, the corresponding {\it Rauzy diagram} has its vertices in $\mathfrak{R}$ and is formed by arrows mapping $\pi \in \mathfrak{R}$ to $\underline{t}(\pi)$ or $\underline{b}(\pi)$. The set of all paths in this diagram is denoted by $\Pi(\mathfrak{R})$.

\subsection{Rauzy induction and renormalization procedures}\label{rauzyinductionsec}\label{sectionrauzyrenor}

We now recall the definition of two procedures first introduced by Rauzy in \cite{R} (see also
Veech \cite{V1}). Let $\mathfrak{R} \subset \mathfrak{S}^0(\mathcal{A})$ be some Rauzy class, and let $(\lambda,\pi)\in\R^\mathcal{A}_+ \times \mathfrak{R}$. We denote by $\alpha(t)$ (resp.
$\alpha(b)$) the last element of the top (resp. bottom) row of $\pi$. Assume that $\lambda_{\alpha(t)} \neq \lambda_{\alpha(b)}$ and set $\ell:=|\lambda| - \min(|\lambda_{\alpha(t)}|,|\lambda_{\alpha(b)}|)$. The first-return map of $f(\lambda,\pi)$ to the subinterval $[0,\ell) \subset I^{\lambda}$ is again an i.e.t.; it is the map $f(\lambda^{(1)},\pi^{(1)})$, where the parameters $(\lambda^{(1)},\pi^{(1)})\in \mathbb{R}_+^{\mathcal{A}}\times \mathfrak{R}$ are defined as follows:
\begin{enumerate}
\item If $\lambda_{\alpha(t)}>\lambda_{\alpha(b)}$ (resp. $\lambda_{\alpha(b)}>\lambda_{\alpha(t)}$), we let $\gamma(\lambda,\pi)$ be the top (resp. bottom) arrow starting at $\pi$, and we set $\alpha:=\alpha(t)$ (resp. $\alpha:=\alpha(b)$). 
\item Let $\lambda^{(1)}_\xi:=\lambda_\xi$ if $\xi\neq \alpha$; else, let $\lambda^{(1)}_\xi:=|\lambda_{\alpha(t)}-\lambda_{\alpha(b)}|$.
\item $\pi^{(1)}$ is the end of the arrow $\gamma(\lambda,\pi)$.
\end{enumerate}
This motivates \textit{a posteriori} the introduction of the top and bottom operations.
The map $\mathcal{Q}_R\colon(\lambda,\pi) \mapsto
(\lambda^{(1)},\pi^{(1)})$ defines a dynamical system in parameter space, and is called the {\it Rauzy induction map}.\\

Since $\mathcal{Q}_R$ commutes with dilations on $\lambda$,
it projectivizes to a map $\mathcal{R}_R\colon \mathbb{P}^{\mathcal{A}}_+ \times \mathfrak{R} \to \mathbb{P}^{\mathcal{A}}_+ \times \mathfrak{R}$, called the \textit{Rauzy renormalization map}.
For $n \geq 1$, the connected components of the domain of definition of $\mathcal{R}_R^n$ are naturally labeled by paths in $\Pi(\mathfrak{R})$ of length $n$. Moreover, if $\gamma$ is a path of length $n$ which ends at $\pi^{(n)}\in \mathfrak{R}$, and $D_\gamma$ denotes the associate connected component, then $\restriction{\mathcal{R}_R^n}{D_\gamma}$ follows the path $\gamma$ in the Rauzy diagram and maps $D_\gamma$ homeomorphically to $\mathbb{P}_+^{\mathcal{A}} \times \{\pi^{(n)}\}$. A sufficient condition for $([\lambda],\pi)$ to belong to the domain of $\mathcal{R}_R^n$ for every $n \geq 1$ is for the coordinates of $[\lambda]$ to be independent over $\Q$. We will always assume it implicitely in the following. Since the set of rationally dependent $[\lambda] \subset \mathbb{P}_+^{\mathcal{A}}$ has dimension $d-2$, it is not a limitation for our estimates.

\begin{thm}[Masur \cite {Ma1}, Veech \cite{V2}]\label{invmeasureRauzy}

Let $\mathfrak{R}\subset \mathfrak{S}^0(\mathcal{A})$ be a Rauzy class. Then $\restriction{\mathcal{R}_R}{\mathbb{P}^{\mathcal{A}}_+
\times \mathfrak{R}}$ admits an ergodic conservative infinite absolutely continuous invariant
measure $\mu$, unique in its measure class up to a scalar multiple. Its density is
a positive rational function.

\end{thm}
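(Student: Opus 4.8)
The plan is to follow Veech's original route: build the invariant measure from the Lebesgue-invariance of the \emph{un-normalized} induction together with the area pairing of the zippered-rectangle construction, and then extract conservativity, ergodicity and uniqueness from the first-return map to a compactly contained simplex, to which Lemmas \ref{projectivestronglyexpanding} and \ref{bdddistt} apply. I expect ergodicity to be the genuinely deep point; existence and rationality of the density are comparatively soft.

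\textbf{Existence and rational density.} First I would observe that each branch of the un-normalized Rauzy induction $\mathcal{Q}_R$ on $\R_+^{\mathcal{A}}\times\mathfrak{R}$ is the inverse of multiplication by an elementary matrix $B_\gamma\in\mathrm{SL}(\mathcal{A},\Z)$ attached to the arrow $\gamma$; since $\det B_\gamma=1$ and the branches tile the cone modulo $0$, $\mathcal{Q}_R$ preserves Lebesgue measure $d\lambda$. The induction lifts to the space of suspension data $(\lambda,h)$, $h\in H^+(\pi)$, where the heights transform contragrediently, the crucial point being that the \emph{area} $\langle\lambda,h\rangle$ is preserved. I would then set
\[
\rho([\lambda],\pi):=\mathrm{vol}\bigl\{h\in H^+(\pi)\ :\ \langle\lambda,h\rangle\le 1\bigr\},
\]
and verify that $\mu:=\rho\,d\lambda$ is $\mathcal{R}_R$-invariant by a change of variables using the contragredient action on $h$, the invariance of the pairing, and $|\det B_\gamma|=1$. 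Since $\{h\in H^+(\pi):\langle\lambda,h\rangle\le 1\}$ is a polytope cut from the fixed cone $H^+(\pi)$ by the single moving hyperplane $\langle\lambda,\cdot\rangle=1$, its combinatorial type is constant on the open orthant, so its volume is a \emph{single} positive rational function of $\lambda$ on each sheet $\{\pi\}\times\mathbb{P}_+^{\mathcal{A}}$ (for $d=2$ one recovers the classical infinite Farey-type density). Finally $\mu$ has infinite total mass because $\rho$ fails to be integrable near $\partial\mathbb{P}_+^{\mathcal{A}}$.

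\textbf{Conservativity, ergodicity, uniqueness.} For the three remaining properties I would pass to the first-return map $T_\Delta$ of $\mathcal{R}_R$ to a simplex $\Delta\Subset\mathbb{P}_+^{\mathcal{A}}$ associated with a strongly connected positive loop in the Rauzy diagram of $\mathfrak{R}$. For such a $\Delta$ the return map has full branches, each inverse branch being the restriction of a projective contraction, so Lemma \ref{projectivestronglyexpanding} furnishes a finite absolutely continuous $T_\Delta$-invariant measure with continuous positive density and shows $T_\Delta$ is strongly expanding, while Lemma \ref{bdddistt} gives bounded distortion. Ergodicity of $T_\Delta$ then follows by the density-point argument: an invariant set would, by bounded distortion along the cylinders $\Delta^{\underline{l}}$ shrinking to a density point, have conditional mass simultaneously near $0$ and near its global value, forcing it to be trivial. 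The minimality of the Rauzy class guarantees that a.e.\ $\mathcal{R}_R$-orbit returns to $\Delta$, so the return time is a.e.\ finite; this recurrence, with the finiteness of $\mu|_\Delta$ and positivity of $\rho$, rules out wandering sets of positive measure (conservativity), and any $\mathcal{R}_R$-invariant set restricts to a $T_\Delta$-invariant, hence trivial, subset of $\Delta$ and is therefore trivial globally (ergodicity). Uniqueness is then automatic: a second invariant measure in the same class has an $\mathcal{R}_R$-invariant Radon--Nikodym density, constant by ergodicity, hence a scalar multiple of $\mu$.

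\textbf{Main obstacle.} The crux is ergodicity, and within it the two inputs that are not routine: producing a simplex $\Delta\Subset\mathbb{P}_+^{\mathcal{A}}$ whose return map is genuinely full-branched with projective-contraction inverses (so that Lemma \ref{projectivestronglyexpanding} applies), and the combinatorial recurrence ensuring a.e.\ return to $\Delta$, supplied by the minimality of the Rauzy class. This is exactly the content that historically made the Masur--Veech theorem deep, whereas the existence of $\mu$ and the rationality and positivity of its density are essentially bookkeeping around the determinant-one structure and the invariance of the area pairing.
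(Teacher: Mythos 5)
This theorem is not proved in the paper at all: it is quoted as background from Masur \cite{Ma1} and Veech \cite{V2}, and everything the paper builds (e.g.\ the return map $T$ of Subsection \ref{rauzyfast}) takes its conclusions as input. So your proposal has to stand on its own as a reconstruction of the classical proof, and as written it has one central gap and one concrete error.

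The central gap is your recurrence claim: ``the minimality of the Rauzy class guarantees that a.e.\ $\mathcal{R}_R$-orbit returns to $\Delta$.'' Minimality of a Rauzy class is a purely combinatorial property (indeed every Rauzy class is minimal by the paper's definition); it says only that the cylinder $\Delta=D_{\gamma_0}$ is reachable in the Rauzy diagram from every vertex. It does not imply that Lebesgue-a.e.\ $[\lambda]$ has an itinerary containing the specific word $\gamma_0$, let alone infinitely often. That statement is essentially \emph{equivalent} to the conservativity you are trying to prove, and it cannot be obtained by a soft conditional-probability argument because the distortion of the inverse branches $\lambda\mapsto [B_\gamma^*\lambda]$ on the \emph{full} simplex $\mathbb{P}_+^{\mathcal{A}}$ is unbounded; bounded distortion (Lemmas \ref{projectivestronglyexpanding} and \ref{bdddistt}) only becomes available after the return map to $\Delta$ is known to be defined a.e., which is circular. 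Note that the paper itself, to define $T$ on a full-measure subset of $\Delta$, invokes Poincar\'e recurrence, which for the infinite measure $\mu$ is exactly the conservativity of the cited theorem. Historically this is where the depth lies: one either proves finiteness of the Masur--Veech volume on the space of zippered rectangles and descends recurrence from Poincar\'e recurrence upstairs, or one uses Kerckhoff-type uniform estimates on the Rauzy matrices valid on arbitrary cylinders. Granting a.e.\ visits to $\Delta$, the rest of your second paragraph (strong expansion, ergodicity by density points, conservativity via the induced finite-measure system, uniqueness from conservative ergodicity) is sound.

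The concrete error is the density. Invariance of $\rho\,d\lambda$ under $\mathcal{R}_R$ forces $\rho$ to be the restriction to the simplex of a function homogeneous of degree $-d$, because each inverse branch has Jacobian $|B_\gamma^*\cdot\lambda|^{-d}$ (as in the proof of Lemma \ref{fastdecaylemma}). Your candidate $\rho(\lambda)=\mathrm{vol}\{h\in H^+(\pi):\langle\lambda,h\rangle\le 1\}$ is a $2g$-dimensional volume, hence homogeneous of degree $-2g$, and $2g=d$ only when the suspension surface has a single singularity (this is why your $d=2$ Farey check works). The correct density is the $d$-dimensional volume of the polytope of \emph{suspension data},
\begin{equation*}
\rho(\lambda,\pi)=\mathrm{vol}\bigl\{\tau\in T^+(\pi)\ :\ \langle\lambda,-\Omega_\pi\tau\rangle\le 1\bigr\},
\end{equation*}
which is the pushforward of the invariant measure $d\lambda\,d\tau$ on zippered rectangles of area at most $1$; the change of variables uses $\tau^{(1)}=(B_\gamma^*)^{-1}\tau$, $\det B_\gamma=1$, and the identity $\Omega_{\pi^{(1)}}=B_\gamma\Omega_\pi B_\gamma^*$. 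With this replacement your argument for rationality and positivity (decomposition of the cone into simplicial subcones) goes through.
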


\subsection{The Rauzy cocycle}

We denote by $E_{\alpha \beta}$ the elementary matrix $(\delta_{i \alpha}\delta_{j \beta})_{1\leq i,j \leq d}$. Let $\mathfrak{R} \subset \mathfrak{S}^0(\mathcal{A})$ be a Rauzy class. We associate with any path $\gamma \in \Pi(\mathfrak{R})$ a matrix $B_\gamma \in \mathrm{SL}(\mathcal{A},\Z)$. The definition is by induction:
\begin{itemize}
\item If $\gamma$ is a vertex, we set $B_\gamma:=\mathbf{1}_d$.
\item If $\gamma$ is an arrow labeled by $\underline{t}$, we define $B_\gamma:=\mathbf{1}_d+E_{\alpha(b) \alpha(t)}$.
\item If $\gamma$ is an arrow labeled by $\underline{b}$, we define $B_\gamma:=\mathbf{1}_d+E_{\alpha(t) \alpha(b)}$.
\item If $\gamma$ is obtained by concatenation of the arrows $\gamma_1,\dots,\gamma_m$, then set $B_\gamma:=B_{\gamma_m}\dots B_{\gamma_1}$.
\end{itemize}

Let us stress the following useful fact. Assume that $(\lambda,\pi)\in \R_+^{\mathcal{A}} \times \mathfrak{R}$ belongs to the domain of $\mathcal{Q}_R^n$, $n \geq 1$, and that the application of $\mathcal{Q}_R^n$ follows the path $\gamma$. Set $\mathcal{Q}_R^n(\lambda,\pi):=(\lambda^{(n)},\pi^{(n)})$; then
\begin{equation}\label{eqqqr}
\lambda^{(n)}=(B_\gamma^*)^{-1} \cdot \lambda.
\end{equation}
In particular, if $\gamma$ starts at $\pi$, then we have $D_\gamma=(B_\gamma^* \cdot \mathbb{P}_+^{\mathcal{A}}) \times \{\pi\}$.

With the notations of Subsection \ref{sectionrauzyrenor}, we define $B^R(\lambda,\pi):=B_{\gamma(\lambda,\pi)}$. Recall that $H^+(\pi):=-\Omega_\pi(T_{\pi}^+)$. Given $(\lambda,\pi)\in \R_+^{\mathcal{A}} \times \mathfrak{R}$ and $h \in H^+(\pi)$, the map $\mathcal{Q}_R$ extends in the following way:
\begin{equation}\label{defrauzz}
\hat{\mathcal{Q}}_R(\lambda,\pi,h):=(\mathcal{Q}_R(\lambda,\pi),B^R(\lambda,\pi) \cdot h).
\end{equation}
It describes the way ``zippered rectangles'' are reordered after application of Rauzy induction; in particular, it leaves the translation structure unchanged. 
Recall that $H(\pi):=\Omega_\pi(\mathbb{R}^{\mathcal{A}})$. It is possible to show that if $\mathcal{Q}_R(\lambda,\pi)=(\lambda^{(1)},\pi^{(1)})$, then
$$
B^R(\lambda,\pi) \cdot H(\pi) = H(\pi^{(1)}).
$$
We thus obtain an integral cocycle $\restriction{B^R(\lambda,\pi)}{H(\pi)}$ over Rauzy induction, called the \textit{Rauzy cocycle}. 

It is easy to see that if $[\lambda']=[\lambda]$, then $\gamma(\lambda',\pi)=\gamma(\lambda,\pi)$, hence the application $([\lambda],\pi)\mapsto B^R([\lambda],\pi)$ is well defined. If $([\lambda],\pi) \in  \mathbb{P}^{\mathcal{A}}_+ \times \mathfrak{R}$, then analogously, the restriction $\restriction{B^R([\lambda],\pi)}{H(\pi)}$ defines a cocycle over $\restriction{\mathcal{R}_R}{\mathbb{P}^{\mathcal{A}}_+ \times \mathfrak{R}}$, which we will also call the Rauzy cocycle. 

\subsection{Recurrence for $\mathcal{R}_R$, bounded distortion and fast decay}\label{rauzyfast}

We choose here $\mathcal{A}=\{1,\dots,d\}$ for some integer $d > 1$, and denote $\mathfrak{S}_d^0:=\mathfrak{S}^0(\{1,\dots,d\})$. Let $\mathfrak{R} \subset \mathfrak{S}_d^0$ be a Rauzy class and choose $\pi \in \mathfrak{R}$. Assume that for some path $\gamma_0\in \Pi(\mathfrak{R})$ which starts and ends at $\pi$, the coefficients of the matrix $B_{\gamma_0}$ are all positive. We let $\Delta:=B_{\gamma_0}^* \cdot \mathbb{P}_+^{d-1}$; it is a simplex compactly contained in $\mathbb{P}_+^{d-1}$, that we identify here with $\{\lambda \in \mathbb{R}_{+}^d,\ | \lambda |:=\sum_i \lambda_i=1\}$.
By projection on the first coordinate, the first-return map of $\mathcal{R}_R$ to $\Delta \times \{\pi\}$ induces a map $T\colon \Delta \to \Delta$, whose domain of definition we denote by $\Delta^1$. By Poincaré recurrence theorem, we know that $\Delta^{1}$ has full measure inside $\Delta$. 

Let us denote by $\Pi(\mathfrak{R})_\pi$ the subset of paths in $\Pi(\mathfrak{R})$ that start and end at $\pi$, and which are \textit{primitive} in the sense that all intermediate vertices differ from $\pi$. Then $\Delta^1$ admits a countable partition $(\Delta_\gamma)_{\gamma \in \Pi(\mathfrak{R})_\pi}$, where the subset $\Delta_\gamma:=B_\gamma^* \cdot \Delta$ corresponds to those $\lambda \in \Delta^1$ for which $(\lambda,\pi)\in D_\gamma$ first returns to $\Delta \times \{\pi\}$ under $\mathcal{R}_R$ after having followed the path $\gamma$ in the Rauzy diagram. In particular $\restriction{T}{\Delta_\gamma}\colon \lambda \mapsto \frac{(B_\gamma^*)^{-1} \cdot \lambda}{|(B_\gamma^*)^{-1} \cdot \lambda|}$. In the following we will denote $h_\gamma:=(\restriction{T}{\Delta_\gamma})^{-1}$.

\begin{lem}\label{boundddd}
The map $T$ preserves a probability measure $\mu$ with respect to which it is strongly expanding, hence has bounded distortion.
\end{lem}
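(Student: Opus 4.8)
The plan is to deduce the statement directly from Lemma~\ref{projectivestronglyexpanding}: I would check that the first-return map $T$, together with the partition $(\Delta_\gamma)_{\gamma \in \Pi(\mathfrak{R})_\pi}$, satisfies all the hypotheses of that lemma. This immediately yields an absolutely continuous invariant probability measure $\mu$ with continuous positive density, together with the fact that $T$ is strongly expanding with respect to $\mu$. The bounded distortion property then follows at once by applying Lemma~\ref{bdddistt}.

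First I would record the structural facts already available. The simplex $\Delta = B_{\gamma_0}^* \cdot \mathbb{P}_+^{d-1}$ is compactly contained in $\mathbb{P}_+^{d-1}$, the index set $\Pi(\mathfrak{R})_\pi$ of primitive return paths is countable (so it can be re-indexed by $\Z$ as in Lemma~\ref{projectivestronglyexpanding}), and by Poincaré recurrence the domain $\Delta^1 = \bigsqcup_\gamma \Delta_\gamma$ has full Lebesgue measure in $\Delta$; hence $(\Delta_\gamma)$ is a partition of $\Delta$ modulo $0$. Each piece $\Delta_\gamma = B_\gamma^* \cdot \Delta$ is the image of $\Delta$ under the projectivization of the invertible linear map $B_\gamma^* \in \mathrm{SL}(d,\Z)$, hence has positive Lebesgue measure. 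Writing $\lambda = B_\gamma^* \mu$ with $\mu \in \Delta$, the formula $\restriction{T}{\Delta_\gamma}(\lambda) = (B_\gamma^*)^{-1}\lambda / |(B_\gamma^*)^{-1}\lambda|$ gives $\restriction{T}{\Delta_\gamma}(\lambda) = \mu$; thus $T$ maps $\Delta_\gamma$ bijectively onto $\Delta$, and $\restriction{T}{\Delta_\gamma}$ is invertible with inverse $h_\gamma\colon \mu \mapsto B_\gamma^* \mu / |B_\gamma^* \mu|$.

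It then remains to check the key hypothesis, namely that each inverse branch $h_\gamma$ is the restriction of a projective contraction, i.e.\ the projectivization of a matrix with non-negative entries. This is where the combinatorial definition of the Rauzy cocycle does the work: by construction every arrow contributes a factor $\mathbf{1}_d + E_{\alpha(b) \alpha(t)}$ or $\mathbf{1}_d + E_{\alpha(t) \alpha(b)}$, each of which has entries in $\{0,1\}$, and $B_\gamma$ is a product of such elementary matrices along $\gamma$. Consequently $B_\gamma$, and therefore its transpose $B_\gamma^*$, has non-negative integer entries, so $h_\gamma$ is indeed the restriction of a projective contraction. With all the hypotheses of Lemma~\ref{projectivestronglyexpanding} verified, $T$ preserves an absolutely continuous probability measure $\mu$ and is strongly expanding, and Lemma~\ref{bdddistt} then supplies the bounded distortion property.

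The only genuinely delicate point — and the one I would treat most carefully — is the verification that $(\Delta_\gamma)_{\gamma}$ is a bona fide partition mod~$0$ on which the return map is well defined almost everywhere. This rests on the fact, recalled in Subsection~\ref{sectionrauzyrenor}, that the connected components of the domain of $\mathcal{R}_R^n$ are labeled by paths of length $n$ and that $\restriction{\mathcal{R}_R^n}{D_\gamma}$ maps $D_\gamma$ homeomorphically onto $\mathbb{P}_+^{\mathcal{A}} \times \{\pi^{(n)}\}$, together with Poincaré recurrence guaranteeing that almost every $\lambda \in \Delta$ returns. Once this bookkeeping is in place, the rest is a direct application of the two quoted lemmas, with no further estimates required.
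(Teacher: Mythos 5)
Your proposal is correct and follows essentially the same route as the paper: the paper's proof likewise observes that $\{\Delta_\gamma\}_{\gamma \in \Pi(\mathfrak{R})_\pi}$ is a countable partition (mod $0$) of $\Delta \Subset \mathbb{P}_+^{d-1}$ into positive-measure pieces whose inverse branches $h_\gamma$ are restrictions of the projective contractions $B_\gamma^*$, and then applies Lemma~\ref{projectivestronglyexpanding} followed by Lemma~\ref{bdddistt}. The extra details you supply (non-negativity of the entries of $B_\gamma$ from the arrow-by-arrow definition, and the Poincar\'e-recurrence bookkeeping for the partition mod $0$) are exactly the facts the paper takes as already established.
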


\begin{proof}
We have seen that $\Delta \Subset \mathbb{P}_+^{d-1}$ admits a countable partition (modulo $0$) $\{\Delta_\gamma\}_{\gamma \in \Pi(\mathfrak{R})_\pi}$ into subsets $\Delta_\gamma=B_\gamma^* \cdot \Delta$ of positive Lebesgue measure. For each such $\gamma$, $\restriction{T}{\Delta_\gamma}$ maps $\Delta_\gamma$ bijectively onto $\Delta$, and its inverse $h_\gamma$ is the restriction of the projective contraction $B_{\gamma}^*$.
Since $\Delta\Subset \mathbb{P}_+^{d-1}$, Lemma \ref{projectivestronglyexpanding} tells us that the map $T$ preserves a probability measure $\mu$ which is absolutely continuous with respect to Lebesgue measure. Moreover, $T$ is strongly expanding with respect to $\mu$, and it also has bounded distortion by Lemma \ref{bdddistt}.
\end{proof}

\noindent In the following we consider the probability measure $\mu$ given by Lemma \ref{boundddd}

\begin{lem}\label{fastdecaylemma}
The map $T$ is fast decaying.
\end{lem}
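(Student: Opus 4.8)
The plan is to deduce (\ref{fastdecay1}) from a single exponential integrability property of the first-return time, namely the exponential-tails estimate of Avila--Gouëzel--Yoccoz. I proceed in three steps.

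First, I reduce the lemma to the convergence of one series: it suffices to find an exponent $\beta\in(0,1)$ with $\sum_{\gamma\in\Pi(\mathfrak{R})_\pi}\mu(\Delta_\gamma)^{1-\beta}<\infty$. Indeed, for every $\varepsilon\in(0,1)$,
\begin{equation*}
\sum_{\mu(\Delta_\gamma)\le\varepsilon}\mu(\Delta_\gamma)=\sum_{\mu(\Delta_\gamma)\le\varepsilon}\mu(\Delta_\gamma)^{\beta}\,\mu(\Delta_\gamma)^{1-\beta}\le\varepsilon^{\beta}\sum_{\gamma\in\Pi(\mathfrak{R})_\pi}\mu(\Delta_\gamma)^{1-\beta},
\end{equation*}
so that (\ref{fastdecay1}) holds with $\alpha_1:=\beta$ and $C_1:=\sum_\gamma\mu(\Delta_\gamma)^{1-\beta}$. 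Thus it is enough to produce one exponent making the series converge.

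Next, I bound $\mu(\Delta_\gamma)$ from below in terms of the number $|\gamma|$ of arrows composing the primitive path $\gamma$, which is exactly the first-return time of $\mathcal{R}_R$ to $\Delta\times\{\pi\}$ along $\gamma$. By Lemma \ref{projectivestronglyexpanding} the density of $\mu$ is continuous and positive on the compact set $\overline\Delta$, so $\mu(\Delta_\gamma)$ is bounded below by a fixed multiple of $\mathrm{Leb}(\Delta_\gamma)$. Since $\Delta_\gamma=B_\gamma^*\cdot\Delta$ with $B_\gamma\in\mathrm{SL}(\mathcal{A},\Z)$, the Jacobian of the projective action of $B_\gamma^*$ at $\lambda$ equals $|B_\gamma^*\lambda|^{-d}$, and $|B_\gamma^*\lambda|\le C'\|B_\gamma\|$ uniformly on $\Delta$; integrating yields a constant $c>0$ with $\mu(\Delta_\gamma)\ge c\,\|B_\gamma\|^{-d}$. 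Finally $B_\gamma$ is a product of $|\gamma|$ elementary matrices $\mathbf 1_d+E_{\alpha\beta}$, each of norm at most some dimensional constant $\Lambda_0$, whence $\|B_\gamma\|\le\Lambda_0^{|\gamma|}$. Setting $\Lambda:=\Lambda_0^{d}$ I obtain
\begin{equation*}
\mu(\Delta_\gamma)\ge c\,\Lambda^{-|\gamma|},\qquad\gamma\in\Pi(\mathfrak{R})_\pi.
\end{equation*}

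The last, and genuinely dynamical, step is to invoke the exponential-tails estimate for Rauzy--Veech renormalization accelerated to the precompact simplex $\Delta$, proved in \cite{AGY}: there is $\delta>0$ with $\sum_{\gamma\in\Pi(\mathfrak{R})_\pi}\mu(\Delta_\gamma)\,e^{\delta|\gamma|}<\infty$ (this is $\int_\Delta e^{\delta r}\,d\mu<\infty$ for the first-return time $r$, which is constant equal to $|\gamma|$ on each $\Delta_\gamma$). Choosing $\beta\in(0,1)$ small enough that $\beta\log\Lambda<\delta$ and combining with the previous lower bound gives
\begin{equation*}
\sum_{\gamma}\mu(\Delta_\gamma)^{1-\beta}=\sum_\gamma\mu(\Delta_\gamma)\,\mu(\Delta_\gamma)^{-\beta}\le c^{-\beta}\sum_\gamma\mu(\Delta_\gamma)\,e^{\beta|\gamma|\log\Lambda}\le c^{-\beta}\sum_\gamma\mu(\Delta_\gamma)\,e^{\delta|\gamma|}<\infty,
\end{equation*}
which by the first step proves that $T$ is fast decaying. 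I expect the main obstacle to be precisely this exponential control of long excursions: the soft Steps 1--2 only convert measures into the combinatorial length $|\gamma|$, whereas the heart of the matter is that excursions returning to $\pi$ after many steps are exponentially rare in $\mu$-measure. This relies on the precompactness of $\Delta$ and on the positivity of $B_{\gamma_0}$---each time the renormalization path traverses the loop $\gamma_0$ the relevant simplices are contracted by a definite factor---together with the bounded distortion of $T$ from Lemma \ref{boundddd}; turning this into the stated exponential tail bound is the substance of \cite{AGY}.
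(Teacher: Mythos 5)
Your Steps 1 and 2 are sound (Step 2 in fact yields the sharper bound $\mu(\Delta_\gamma)\ge c\,\|B_\gamma\|^{-d}$), but Step 3 — which you correctly identify as the heart of the matter — invokes a statement that is not the Avila--Gou\"ezel--Yoccoz theorem and is in fact false. The function with exponential tails in \cite{AGY} is the roof function of the Teichm\"uller suspension flow, which by \eqref{returntm} equals $r_\Delta\circ h_\gamma(\lambda)=\log|B_\gamma^*\cdot\lambda|$ on $\Delta_\gamma$: it is comparable to $\log\|B_\gamma\|$, \emph{not} to the number of arrows $|\gamma|$, so your identification ``$r$ is constant equal to $|\gamma|$ on each $\Delta_\gamma$'' is where the argument breaks. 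These two quantities are exponentially far apart: a Rauzy path can apply the top operation $n$ times in a row, and since all the losers differ from the (fixed) winner $\alpha$, the product $(\mathbf 1_d+E_{\beta_1\alpha})\cdots(\mathbf 1_d+E_{\beta_n\alpha})=\mathbf 1_d+\sum_k E_{\beta_k\alpha}$ has norm $O(n)$ while the combinatorial length is $n$. By your own Step 2 bound, excursions containing such a block carry $\mu$-measure $\gtrsim n^{-d}$, so the tails of the combinatorial return time decay only polynomially and $\sum_\gamma\mu(\Delta_\gamma)e^{\delta|\gamma|}=\infty$ for every $\delta>0$ (already individual terms are unbounded). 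The failure is in fact even more basic: by the paper's Theorem \ref{invmeasureRauzy}, $\mathcal{R}_R$ preserves an \emph{infinite} conservative ergodic measure $\nu$ whose density is bounded above and below on the compact set $\overline\Delta\Subset\mathbb{P}_+^{d-1}$; Kac's formula for the return time $R$ of $\mathcal{R}_R$ to $\Delta\times\{\pi\}$ (which \emph{is} equal to $|\gamma|$ on $\Delta_\gamma$) gives $\int_\Delta R\,d\nu=\nu(\mathbb{P}_+^{d-1}\times\mathfrak{R})=\infty$, and since $\mu$ dominates a positive multiple of the restriction of $\nu$ to $\Delta$, the combinatorial return time is not even $\mu$-integrable, let alone exponentially integrable. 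This is precisely why \cite{AGY} measure returns in Teichm\"uller time (the log of the matrix norm) rather than in numbers of Rauzy steps.

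The repair is to run your Steps 1 and 3 with $\log\|B_\gamma\|$ in place of $|\gamma|$, and then you land exactly on the paper's proof. The Jacobian identity \eqref{eqmessimplexe}, $\mu(\Delta_\gamma)=\int_\Delta e^{-d\,r_\Delta\circ h_\gamma}\,d\mu$, together with the distortion bound \eqref{distrdelta} (which makes $r_\Delta\circ h_\gamma$ constant on $\Delta$ up to a uniform additive constant), yields $\mu(\Delta_\gamma)^{-\beta}\le C^\beta e^{\beta d\,\inf_\Delta r_\Delta\circ h_\gamma}$; your H\"older splitting then gives $\sum_\gamma\mu(\Delta_\gamma)^{1-\beta}\le C'\sum_\gamma\mu(\Delta_\gamma)\,e^{\beta d\,\inf_\Delta r_\Delta\circ h_\gamma}\le C'\int_\Delta e^{\beta d\,r_\Delta}\,d\mu$, which is finite whenever $\beta<\sigma_0/d$ by the genuine exponential-tails theorem of \cite{AGY}. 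Combined with your Step 1, this proves \eqref{fastdecay1} with any $\alpha_1=\beta<\sigma_0/d$. The paper draws the same conclusion (with $\alpha_1=\sigma_0/d$) from the same three ingredients — \eqref{eqmessimplexe}, \eqref{distrdelta}, and the exponential tails of $r_\Delta$ — packaged through Markov's inequality and a pointwise lower bound on $r_\Delta\circ h_\gamma$ rather than a H\"older splitting; so once corrected, your argument is essentially the paper's.
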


Before giving the proof, we need to recall a fact from \cite{AGY}. In this paper, Gouëzel, Yoccoz and the first author investigate the properties of a flow defined as a suspension over $\mathcal{R}_R$. 
The set $\Delta\times \{\pi\}$ can be seen as a transverse section for the flow; moreover, the first-return time function $r_\Delta\colon \Delta \to \mathbb{R}_+\cup\{\infty\}$ of this flow to $\Delta\times \{\pi\}$ satisfies: for any $\gamma \in \Pi(\mathfrak{R})_\pi$,  
\begin{equation}\label{returntm}
r_\Delta \circ h_\gamma\colon \lambda \mapsto \log |B_\gamma^*\cdot \lambda|.
\end{equation}
Their analysis implies the following result.
\begin{theo}[Theorem 4.7, Avila-Gouëzel-Yoccoz \cite{AGY}]
The map $r_\Delta$ has exponential tails, i.e., there exists $\sigma_0 > 0$ such that
\begin{equation*}
A:=\int_\Delta e^{\sigma_0 r_\Delta(\lambda)} d\mu(\lambda) < \infty.
\end{equation*}
\end{theo}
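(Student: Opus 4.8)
\emph{Proof plan.} The plan is to reduce the integral $A$ to a weighted sum over the first-return partition $\{\Delta_\gamma\}_{\gamma\in\Pi(\mathfrak{R})_\pi}$ and then to establish the convergence of the resulting Poincaré-type series. Decomposing over the partition and using the formula (\ref{returntm}), for $\lambda\in\Delta_\gamma$ one has $r_\Delta(\lambda)=\log|B_\gamma^*\cdot T\lambda|$ with $T\lambda\in\Delta$. Since $\Delta\Subset\mathbb{P}_+^{d-1}$, every $\eta\in\Delta$ has all coordinates bounded below by some $c>0$ and satisfies $|\eta|=1$; as $B_\gamma^*$ has nonnegative entries, this gives $c\,\|B_\gamma^*\|\leq|B_\gamma^*\eta|\leq\|B_\gamma^*\|$, so that $e^{\sigma_0 r_\Delta}\asymp\|B_\gamma^*\|^{\sigma_0}$ uniformly on $\Delta_\gamma$. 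On the other hand, $\mu$ has a continuous positive density on $\overline\Delta$ by Lemma \ref{projectivestronglyexpanding}, the projective Jacobian of $\eta\mapsto B_\gamma^*\eta/|B_\gamma^*\eta|$ equals $\det(B_\gamma^*)/|B_\gamma^*\eta|^d=|B_\gamma^*\eta|^{-d}$ (recall $B_\gamma\in\mathrm{SL}(\mathcal{A},\Z)$), and the same two-sided bound on $|B_\gamma^*\eta|$ yields $\mu(\Delta_\gamma)\asymp\|B_\gamma^*\|^{-d}$. Altogether the statement reduces to
\begin{equation*}
A\asymp\sum_{\gamma\in\Pi(\mathfrak{R})_\pi}\|B_\gamma^*\|^{\sigma_0-d}<\infty
\end{equation*}
for some $\sigma_0>0$, that is, to the convergence of $\sum_\gamma\|B_\gamma^*\|^{-s}$ for some exponent $s=d-\sigma_0$ strictly below the critical value $d$.

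The critical exponent itself is free of charge: the pieces $\Delta_\gamma$ are disjoint and cover $\Delta^1$, so $\sum_\gamma\|B_\gamma^*\|^{-d}\asymp\sum_\gamma\mu(\Delta_\gamma)=\mu(\Delta^1)=1<\infty$; equivalently $\mu(\{r_\Delta>t\})\to0$, but only at a polynomial rate in general. The heart of the matter is to gain the definite amount $\sigma_0>0$, which amounts to the counting bound $N(t):=\#\{\gamma:\|B_\gamma^*\|\leq t\}\lesssim t^{d-\sigma_0}$. To produce it I would pass to the Zorich acceleration of $\mathcal{R}_R$, which groups each maximal run of arrows of the same type ($\underline t$ or $\underline b$) into a single accelerated step. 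A run of length $k$ multiplies the cocycle norm only by a factor $\asymp k$, so its contribution to $r_\Delta=\log\|B_\gamma^*\|$ is $\asymp\log k$; since run lengths obey a Gauss-type quadratic tail $\mu(\mathrm{length}=k)\asymp k^{-2}$, the per-step roof $\widehat r\asymp\log(1+k)$ has an \emph{exponential} tail and hence a finite exponential moment. Meanwhile, the positivity of $B_{\gamma_0}$ together with the bounded distortion of Lemma \ref{boundddd} provides a uniform lower bound $p_0>0$ on the conditional probability of returning to the section $\Delta\times\{\pi\}$ at each accelerated visit to $\pi$, so the number $K$ of accelerated steps before the first return to $\Delta\times\{\pi\}$ has a geometric tail.

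It then remains to combine these two facts: $r_\Delta=\sum_{i<K}\widehat r\circ\widehat T^{\,i}$ is a sum of a geometric number $K$ of per-step roofs, each with finite exponential moment, and one wants to conclude $\int e^{\sigma_0 r_\Delta}\,d\mu<\infty$ for $\sigma_0$ small. I expect this last step to be the main obstacle. The difficulty is genuine: a single unusually long run contributes heavily to $r_\Delta$, so a naive sum-of-independent-variables estimate fails, and one must use bounded distortion (Lemma \ref{bdddistt}) to control the correlations between the heavy per-step contributions and the return count $K$, establishing a quasi-multiplicative bound for the induced exponential moments and hence a spectral-gap-type estimate for the transfer operator of $\widehat T$ on a suitable weighted space. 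For $d>2$ even the exponential integrability of the accelerated roof $\widehat r$ requires care, since the digits of the higher-dimensional continued fraction interact; this is precisely the point at which one invokes the fine analysis of the suspension flow carried out in \cite{AGY}.
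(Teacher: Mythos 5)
The paper does not actually prove this statement: it is imported wholesale from \cite{AGY} (their Theorem 4.7), and the only things the present paper uses downstream are the Markov bound \eqref{markkk} and the distortion estimate \eqref{distrdelta}. So your attempt must be judged on whether it would stand alone, and it does not. Your reduction is fine and consistent with the identities the paper does use: on $\Delta_\gamma$ one has $e^{\sigma_0 r_\Delta}\asymp\|B_\gamma^*\|^{\sigma_0}$ and $\mu(\Delta_\gamma)\asymp\|B_\gamma^*\|^{-d}$ (this is \eqref{eqmessimplexe} plus compact containment of $\Delta$), so the theorem is indeed equivalent to $\sum_{\gamma\in\Pi(\mathfrak{R})_\pi}\|B_\gamma^*\|^{-(d-\sigma_0)}<\infty$, and the convergence at the critical exponent $d$ is free. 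The gap is in how you propose to gain $\sigma_0>0$. The claim that ``bounded distortion of Lemma \ref{boundddd} provides a uniform lower bound $p_0>0$ on the conditional probability of returning to the section at each accelerated visit to $\pi$'' is circular: Lemma \ref{boundddd} gives bounded distortion for the map $T$ \emph{induced} on $\Delta$, i.e.\ it is a statement about the branches $h_\gamma$ of the first-return map and presupposes that the return has occurred. To get a per-visit return probability uniform over the history you would need distortion control for the un-induced Rauzy--Zorich dynamics uniformly along excursions away from $\Delta$, and this is exactly what fails: the invariant density of $\mathcal{R}_R$ is unbounded near $\partial\mathbb{P}_+^{d-1}$, and long excursions (degenerating length vectors, where the cocycle norm grows while the orbit lingers near a face of the simplex) are precisely the enemy. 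Ruling out heavy tails for these excursions is the actual content of Section 4 of \cite{AGY}, which proceeds by a delicate induction on the combinatorics of the Rauzy diagram (decomposing degenerations into dynamics on smaller Rauzy classes), not by a trial-based geometric-return argument; so your geometric tail for $K$ assumes essentially the theorem being proved.

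Two further points. First, even granting an exponential moment for the per-step roof $\widehat r$ (your quadratic-tail input $\mu(\mathrm{length}=k)\asymp k^{-2}$ is itself a Zorich-type estimate needing proof for general Rauzy classes, with respect to the correct measure) and a geometric tail for $K$, the exponential integrability of the random sum $\sum_{i<K}\widehat r\circ\widehat T^{\,i}$ requires uniform conditional versions of both bounds, since the heavy per-step contributions and the return count are strongly correlated; you correctly flag this, but the quasi-multiplicativity you would need is again only available through the uniform (all-starting-measures $\mu^{\underline l}$) estimates of \cite{AGY}, not through Lemma \ref{bdddistt} applied to $T$. Second, your closing sentence explicitly invokes ``the fine analysis of the suspension flow carried out in \cite{AGY}'' at the crux, which concedes the point: as written, the proposal is a correct reformulation of the problem (the counting bound $N(t)\lesssim t^{d-\sigma_0}$) together with a heuristic for why it should hold, but the step from the heuristic to a proof is missing and is not a routine verification.
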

\noindent By Markov inequality, it follows that for any $r \geq 0$,
\begin{equation}\label{markkk}
\mu(\lambda \in \Delta,\ r_\Delta(\lambda) \geq r)\leq A e^{-\sigma_0 r}.
\end{equation}
They also obtain the following distortion estimate.
\begin{lem}[Lemma 4.6, Avila-Gouëzel-Yoccoz \cite{AGY}]
There exists a constant $C >0$ such that for any path $\gamma \in \Pi(\mathfrak{R})_\pi$,
\begin{equation}\label{distrdelta}
\|D(r_\Delta \circ h_\gamma)\|_{C^0} \leq C.
\end{equation}
\end{lem}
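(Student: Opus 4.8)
The plan is to turn the estimate into an explicit computation of a logarithmic derivative, and then to absorb the (possibly very large) entries of the Rauzy matrices using the compact containment $\Delta \Subset \mathbb{P}_+^{d-1}$. By \eqref{returntm}, the function to be controlled is, for $\gamma \in \Pi(\mathfrak{R})_\pi$,
\[
\phi_\gamma(\lambda) := r_\Delta \circ h_\gamma(\lambda) = \log |B_\gamma^* \cdot \lambda|, \qquad \lambda \in \Delta,
\]
where $|w| = \sum_i w_i = \langle \mathbf 1, w\rangle$ and $\mathbf 1 = (1,\dots,1)$. First I would rewrite the argument of the logarithm as a linear functional of $\lambda$ with a \emph{positive} weight vector: since $\langle \mathbf 1, B_\gamma^* \lambda\rangle = \langle B_\gamma \mathbf 1, \lambda\rangle$, setting $c_\gamma := B_\gamma \mathbf 1$ gives $\phi_\gamma(\lambda) = \log\langle c_\gamma, \lambda\rangle$. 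Because $B_\gamma \in \mathrm{SL}(d,\Z)$ has non-negative integer entries and is invertible, no row of $B_\gamma$ vanishes, so each coordinate satisfies $(c_\gamma)_i \geq 1 > 0$; thus $c_\gamma$ lies in the open positive cone, although its entries may grow without bound as $|\gamma| \to \infty$.

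Next I would differentiate. For $\lambda \in \Delta$ and a vector $v$ tangent to the simplex $\{|\lambda|=1\}$ at $\lambda$,
\[
D_v \phi_\gamma(\lambda) = \frac{\langle c_\gamma, v\rangle}{\langle c_\gamma, \lambda\rangle}.
\]
I bound the numerator by $|\langle c_\gamma, v\rangle| \leq \norm{c_\gamma}_\infty \norm{v}_1 \leq \norm{c_\gamma}_1 \norm{v}_1$, using that the entries of $c_\gamma$ are positive so that $\norm{c_\gamma}_\infty \leq \norm{c_\gamma}_1 = \langle c_\gamma, \mathbf 1\rangle$. For the denominator I invoke the compact containment $\Delta \Subset \mathbb{P}_+^{d-1}$: there is a constant $\theta > 0$ depending only on $\Delta$ (hence only on $\pi$ and $\gamma_0$, \emph{not} on $\gamma$) such that $\min_i \lambda_i \geq \theta$ for every $\lambda \in \Delta$ normalized by $|\lambda| = 1$, whence
\[
\langle c_\gamma, \lambda\rangle = \sum_i (c_\gamma)_i \lambda_i \geq \theta \sum_i (c_\gamma)_i = \theta \norm{c_\gamma}_1.
\]
Combining the two estimates, the factor $\norm{c_\gamma}_1$, which carries all the dependence on $\gamma$, cancels:
\[
|D_v \phi_\gamma(\lambda)| \leq \frac{\norm{c_\gamma}_1 \norm{v}_1}{\theta \norm{c_\gamma}_1} = \theta^{-1} \norm{v}_1 .
\]
This yields $\norm{D(r_\Delta \circ h_\gamma)}_{C^0} \leq \theta^{-1}$ uniformly in $\gamma \in \Pi(\mathfrak{R})_\pi$ (up to the fixed dimensional constant relating $\norm{\cdot}_1$ on tangent vectors to the norm used to define the $C^0$ norm), proving the claim with $C = \theta^{-1}$.

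The computation is short, and the only genuine point is conceptual rather than technical: even though the weight vectors $c_\gamma$ are unbounded along the Rauzy orbit, the quantity being estimated is a \emph{logarithmic} derivative, so $c_\gamma$ enters the numerator and the denominator to the same order and cancels. What keeps the denominator comparable to $\norm{c_\gamma}_1$ — rather than degenerating as the entries of $c_\gamma$ become wildly unbalanced — is precisely the uniform lower bound $\theta$ on the coordinates over $\Delta$, which is built into $\Delta = B_{\gamma_0}^* \cdot \mathbb{P}_+^{d-1}$ by the choice of $\gamma_0$ with a strictly positive matrix. I would therefore single out this uniform positivity $\min_i \lambda_i \geq \theta > 0$ on $\Delta$ as the main ingredient, the remainder being the elementary inequalities $\norm{c_\gamma}_\infty \leq \norm{c_\gamma}_1$ and $\langle c_\gamma, \lambda\rangle \geq (\min_i \lambda_i)\norm{c_\gamma}_1$; note that primitivity of $\gamma$ plays no role here, so the bound in fact holds for every path.
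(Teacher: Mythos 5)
Your proof is correct. Note that the paper does not prove this lemma at all --- it is imported verbatim as Lemma 4.6 of \cite{AGY} --- and your argument is essentially the standard computation behind that citation: writing $r_\Delta \circ h_\gamma(\lambda) = \log\langle B_\gamma \mathbf{1}, \lambda\rangle$ via \eqref{returntm} and observing that the $\gamma$-dependence cancels in the logarithmic derivative, since the compact containment $\Delta \Subset \mathbb{P}_+^{d-1}$ (guaranteed by the positivity of $B_{\gamma_0}$) yields the uniform lower bound $\langle B_\gamma \mathbf{1}, \lambda\rangle \geq \theta \left\| B_\gamma \mathbf{1} \right\|_1$ with $\theta = \min_{\lambda \in \overline{\Delta}} \min_i \lambda_i > 0$, matching the upper bound $|\langle B_\gamma \mathbf{1}, v\rangle| \leq \left\| B_\gamma \mathbf{1} \right\|_1 \left\| v \right\|_1$ on the numerator. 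Your closing observation that primitivity of $\gamma$ plays no role is also accurate: the estimate holds for every path starting at $\pi$, which is exactly the generality in which \cite{AGY} states it.
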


\noindent \textit{Proof of Lemma \ref{fastdecaylemma}.}
Let $\gamma \in \Pi(\mathfrak{R})_\pi$. The Jacobian of $h_\gamma$ at the point $\lambda \in \Delta$ is given by $|\det D h_\gamma(\lambda)|=\frac{1}{|B_\gamma^* \cdot \lambda|^d}$. 
We deduce
\begin{equation}\label{eqmessimplexe}
\mu(\Delta_\gamma)=\mu(B_\gamma^* \cdot \Delta)=\int_\Delta |B_\gamma^* \cdot \lambda|^{-d} d \mu (\lambda)=\int_\Delta e^{-d r_\Delta \circ h_\gamma(\lambda)} d\mu(\lambda).
\end{equation}
Let $\varepsilon >0$ and assume that $\mu(\Delta_\gamma) \leq \varepsilon$. If $\lambda_0 \in \Delta$ is chosen such that $r_\Delta \circ h_\gamma$ is maximal in $\lambda_0$, then from (\ref{eqmessimplexe}), we see that 
$$
r_\Delta \circ h_\gamma (\lambda_0) \geq -\frac{\ln(\varepsilon)}{d} + C(\Delta),
$$
where we have set $C(\Delta) := \frac{\ln(\mu(\Delta))}{d}$. 
By (\ref{distrdelta}), we obtain a constant $C'(\Delta)$ such that for any $\lambda \in \Delta$,
$$
r_\Delta \circ h_\gamma (\lambda) \geq -\frac{\ln(\varepsilon)}{d} + C'(\Delta).
$$
If we choose $r:=-\frac{\ln(\varepsilon)}{d} + C'(\Delta)$ in (\ref{markkk}), we thus get
\begin{align*}
\sum\limits_{\mu(\Delta_\gamma) \leq \varepsilon} \mu(\Delta_\gamma)&\leq \mu(\lambda \in \Delta,\ r_\Delta(\lambda) \geq r)\\
&\leq C_1 \varepsilon^{\alpha_1}
\end{align*}
with $C_1:=A e^{-\sigma_0 C'(\Delta)}>0$ and $\alpha_1:=\sigma_0 /d>0$. 
\qed

\noindent Furthermore, the Rauzy cocycle induces the locally constant cocycle $(T,A)$ where $\restriction{A}{\Delta_\gamma}:=B_\gamma$. 
\begin{lem}\label{fastdddd}
The cocycle $(T,A)$ is fast decaying.
\end{lem}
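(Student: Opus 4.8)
The definition in Subsection \ref{fastdecaydefin} of a fast decaying \emph{cocycle} asks for two things: that $T$ be fast decaying and that $A$ be fast decaying. The former is exactly the content of Lemma \ref{fastdecaylemma}, so the plan is to establish the remaining estimate (\ref{fastdecay2}) for $A$, namely $\sum_{\|B_\gamma\|_0 \geq n} \mu(\Delta_\gamma) \leq C_2 n^{-\alpha_2}$, and then invoke the definition. The strategy I would follow is to show that a large value of $\|B_\gamma\|_0$ forces $\mu(\Delta_\gamma)$ to be polynomially small, and then to feed this into the fast decay of $T$ already proved in Lemma \ref{fastdecaylemma}.

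The first step is to control $\|B_\gamma\|_0$ by the ordinary operator norm $\|B_\gamma\|$. Since $B_\gamma \in \mathrm{SL}(\mathcal{A},\Z)$ has $\det B_\gamma = 1$, its singular values $\sigma_1 \geq \dots \geq \sigma_d > 0$ satisfy $\prod_i \sigma_i = 1$ and $\sigma_1 \geq 1$; hence $\|B_\gamma^{-1}\| = \sigma_d^{-1} \leq \sigma_1^{d-1} = \|B_\gamma\|^{d-1}$, so that $\|B_\gamma\|_0 \leq \|B_\gamma\|^{d-1}$. In particular $\|B_\gamma\|_0 \geq n$ forces $\|B_\gamma\| \geq n^{1/(d-1)}$. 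The second step is a uniform lower bound $|B_\gamma^* \cdot \lambda| \geq c_0 \|B_\gamma\|$ valid for all $\lambda \in \Delta$, where $c_0 > 0$ depends only on $\Delta$ and $d$. This uses that $\Delta \Subset \mathbb{P}_+^{d-1}$, so every $\lambda \in \overline{\Delta}$ with $|\lambda| = 1$ has all coordinates bounded below by some $c_0 > 0$, together with the fact that $B_\gamma$, being a product of matrices of the form $\mathbf{1}_d + E_{\alpha\beta}$, has non-negative entries (as does $B_\gamma^*$). Indeed $|B_\gamma^* \cdot \lambda| = \sum_{i,j}(B_\gamma^*)_{ij}\lambda_j \geq c_0 \sum_{i,j}(B_\gamma)_{ij} \geq c_0 \|B_\gamma\|$, the last inequality because for a non-negative matrix the Frobenius norm, and a fortiori the operator norm, is dominated by the sum of the entries. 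Plugging this into the identity (\ref{eqmessimplexe}) gives $\mu(\Delta_\gamma) = \int_\Delta |B_\gamma^* \cdot \lambda|^{-d}\, d\mu(\lambda) \leq c_0^{-d}\|B_\gamma\|^{-d}$.

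Combining the two steps, $\|B_\gamma\|_0 \geq n$ implies $\mu(\Delta_\gamma) \leq c_0^{-d}\, n^{-d/(d-1)} =: \varepsilon(n)$. Consequently the set of words with $\|B_\gamma\|_0 \geq n$ is contained in the set of words with $\mu(\Delta_\gamma) \leq \varepsilon(n)$, and the fast decay of $T$ (estimate (\ref{fastdecay1}) with exponent $\alpha_1$, as provided by Lemma \ref{fastdecaylemma}) yields $\sum_{\|B_\gamma\|_0 \geq n}\mu(\Delta_\gamma) \leq \sum_{\mu(\Delta_\gamma) \leq \varepsilon(n)}\mu(\Delta_\gamma) \leq C_1\, \varepsilon(n)^{\alpha_1} = C_2\, n^{-\alpha_2}$ with $\alpha_2 := \alpha_1 d/(d-1) > 0$ and $C_2 := C_1\, c_0^{-d\alpha_1}$. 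This is precisely (\ref{fastdecay2}), so $A$ is fast decaying, and together with Lemma \ref{fastdecaylemma} the cocycle $(T,A)$ is fast decaying.

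The only genuinely delicate point is the interplay of norms in the first two steps: bounding $\|B_\gamma^{-1}\|$ through $\det B_\gamma = 1$, and converting the measure formula (\ref{eqmessimplexe}) into a polynomial bound in $\|B_\gamma\|$ via the compact containment of $\Delta$. Everything else is bookkeeping that reduces the claim to the already-established fast decay of $T$, so I do not expect any serious obstacle beyond keeping the dimensional constants straight.
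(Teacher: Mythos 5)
Your proof is correct and follows essentially the same route as the paper's: use $B_\gamma \in \mathrm{SL}(d,\Z)$ to bound the norm of the inverse by a power of $\|B_\gamma\|$, convert this via \eqref{eqmessimplexe} and the compact containment $\Delta \Subset \mathbb{P}_+^{d-1}$ into a polynomial bound on $\mu(\Delta_\gamma)$, and then invoke the fast decay of $T$ from Lemma \ref{fastdecaylemma}. The only differences are cosmetic: the paper bounds the inverse via the adjugate (cofactor) formula with the max-entry norm where you use singular values, and it weakens the exponent $d/(d-1)$ to $1$, getting $\alpha_2=\alpha_1$ instead of your slightly sharper $\alpha_1 d/(d-1)$.
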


\begin{proof}
It remains to show that $A$ is fast decaying. For any $\gamma \in \Pi(\mathfrak{R})_\pi$, $B_\gamma \in \mathrm{SL}(d,\Z)$; it follows that if $\|B_\gamma\|_\infty\leq M$, then we also have $\|B_\gamma^{-1}\|_\infty\leq (d-1)!M^{d-1}$. In particular, for any $n \geq0$, $\max(\|B_\gamma\|_\infty,\|B_\gamma^{-1}\|_\infty) \geq n$ implies that $\min(\|B_\gamma^{*}\|_{\infty},\|(B_\gamma^{*})^{-1}\|_{\infty}) \geq \left(\frac{n}{(d-1)!}\right)^{\frac{1}{d-1}}$. Therefore, by \eqref{eqmessimplexe}, and by equivalence of the norms, there exists a constant $\widetilde C_0>0$ depending only on $d$, $\mu$ and $\Delta$ such that if $\|B_\gamma\|_0 \geq n$ for some $n \geq 0$, then $\mu(\Delta_\gamma)=\int_\Delta |B_\gamma^* \cdot \lambda|^{-d} d \mu (\lambda) \leq \widetilde C_0 n^{-\frac{d}{d-1}}\leq \widetilde C_0 n^{-1}$. We deduce from what precedes that
\begin{align*}
\sum\limits_{\|B_\gamma\|_0 \geq n} \mu(\Delta_\gamma)&\leq \sum\limits_{\mu(\Delta_\gamma) \leq \widetilde C_0 n^{-1}} \mu(\Delta_\gamma)\\
&\leq C_2 n^{-\alpha_2},
\end{align*}
where we have set $C_2:=C_1 \widetilde{C}_0^{\alpha_1}>0$ and $\alpha_2:=\alpha_1>0$,
which concludes.
\end{proof}

We have seen that for the transformation $T$ to be well defined, we have to restrict ourselves to the (full-measure) subset $\Delta^1\subset \Delta$. Similarly, for every $n \geq 1$, let $\Delta^n$ be the domain of $T^n$ and denote by $\Delta^\infty:=\bigcap\limits_{n \in \N} \Delta^n$ the subset of points in $\Delta$ which come back to $\Delta$ infinitely many times. To conclude this part, we will explain why it is not a limitation to consider only points in $\Delta^{\infty}$, since it will not affect the result on Hausdorff dimension we aim to show. It follows in fact from a result of Delecroix and the first author that we now recall, and which tells us that the set of escaping points has small Hausdorff dimension.

\begin{prop}[Theorem 29, Avila-Delecroix \cite{AD}]\label{escapingpoints} Let $\Delta$ be a simplex in $\mathbb{P}_+^{d-1}$ admitting a partition $\{\Delta^{(l)}\}_{l \in \Z}$, and let $T\colon \Delta \to \Delta$ be a map with bounded distortion and such that for every $l \in \Z$, $\restriction{T}{\Delta^{(l)}}$ is a projective transformation. Assume that $T$ is fast decaying with fast decay constant $\alpha_1 >0$. Then $$\mathrm{HD}(\Delta\backslash\Delta^{\infty}) \leq d-1 - \frac{\alpha_1}{1+\alpha_1} < d-1.$$
\end{prop}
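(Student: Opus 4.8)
The plan is to reduce the statement to a Hausdorff dimension bound for the single \emph{gap set} $G:=\Delta\setminus\Delta^1=\Delta\setminus\bigcup_{l}\Delta^{(l)}$, and then to estimate $\mathrm{HD}(G)$ by an explicit covering argument in which fast decay is balanced against the geometry of the cylinders.

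For the reduction, I observe that $x\notin\Delta^\infty$ exactly when there is a first integer $n\geq 0$ with $x\in\Delta^n$ and $T^n(x)\in G$. Writing $h_{\underline l}:=(T^{\underline l})^{-1}$ for a word $\underline l$ with $|\underline l|=n$, such an $x$ lies in $h_{\underline l}(G)$ for the unique length-$n$ word $\underline l$ with $x\in\Delta^{\underline l}$ (the case $n=0$ recovering $G$ itself). Hence
\begin{equation*}
\Delta\setminus\Delta^\infty=\bigcup_{n\geq 0}\ \bigcup_{|\underline l|=n} h_{\underline l}(G).
\end{equation*}
Each $h_{\underline l}$ is the restriction to $\overline\Delta\Subset\mathbb{P}^{d-1}_+$ of a projective transformation, hence a diffeomorphism onto its image and in particular bi-Lipschitz on the compact set $\overline\Delta$; thus $\mathrm{HD}(h_{\underline l}(G))=\mathrm{HD}(G)$. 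As $\Omega$ is countable and the Hausdorff dimension of a countable union is the supremum of the dimensions, I would conclude $\mathrm{HD}(\Delta\setminus\Delta^\infty)=\mathrm{HD}(G)$, so it suffices to prove $\mathrm{HD}(G)\leq d-1-\frac{\alpha_1}{1+\alpha_1}$.

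To bound $\mathrm{HD}(G)$, fix $\varepsilon\in(0,1)$ and split the cylinders into the \emph{large} ones, $L_\varepsilon:=\{l:\mu(\Delta^{(l)})>\varepsilon\}$, and the rest. Since $\sum_l\mu(\Delta^{(l)})=1$ one has $\#L_\varepsilon<\varepsilon^{-1}$, while fast decay (\ref{fastdecay1}) gives $\mu(G_\varepsilon)\leq C_1\varepsilon^{\alpha_1}$ for the residual region $G_\varepsilon:=\Delta\setminus\bigcup_{l\in L_\varepsilon}\Delta^{(l)}\supseteq G$. Covering $\Delta$ by a grid of cubes of side $\rho$, I claim that every cube meeting $G$ is either contained in $G_\varepsilon$ or meets the boundary $\partial\Delta^{(l)}$ of some large simplex: a cube containing a point of $G$ lies outside every $\Delta^{(l)}$, so if it avoids all large boundaries it avoids all large simplices and sits inside $G_\varepsilon$. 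The cubes of the first type number at most $\mathrm{Leb}(G_\varepsilon)\rho^{-(d-1)}\lesssim\varepsilon^{\alpha_1}\rho^{-(d-1)}$, using that $\mu$ is equivalent to Lebesgue with bounded density (Lemma \ref{projectivestronglyexpanding}). For the second type, each $\Delta^{(l)}$ is the projective image of the simplex $\Delta$, hence a convex body of diameter at most $\mathrm{diam}(\Delta)=:D_0$, so its boundary is met by $\lesssim(D_0/\rho)^{d-2}$ cubes; summing over $L_\varepsilon$ gives $\lesssim\varepsilon^{-1}\rho^{-(d-2)}$ cubes. The number $N(\rho)$ of $\rho$-cubes needed to cover $G$ thus satisfies $N(\rho)\lesssim\varepsilon^{\alpha_1}\rho^{-(d-1)}+\varepsilon^{-1}\rho^{-(d-2)}$.

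Finally I would optimise. Choosing $\varepsilon:=\rho^{1/(1+\alpha_1)}$ makes the two contributions to the $s$-dimensional sum $N(\rho)\rho^s$ of the same order $\rho^{\,s-(d-1)+\alpha_1/(1+\alpha_1)}$, which tends to $0$ as $\rho\to 0$ precisely when $s>d-1-\frac{\alpha_1}{1+\alpha_1}$; hence $\mathrm{HD}(G)\leq d-1-\frac{\alpha_1}{1+\alpha_1}$, which concludes. The main obstacle is exactly this covering of the gap: the naive estimate using only $\mu(G_\varepsilon)\leq C_1\varepsilon^{\alpha_1}$ would suggest the stronger bound $d-1-\alpha_1$, but because the cylinders may be arbitrarily eccentric one must also pay for the boundaries of the large simplices, and it is the balance between the bulk term $\varepsilon^{\alpha_1}\rho^{-(d-1)}$ and the boundary term $\varepsilon^{-1}\rho^{-(d-2)}$ that produces the exponent $\frac{\alpha_1}{1+\alpha_1}$. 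Convexity of projective simplices (to bound their perimeter by $\lesssim D_0^{d-2}$) together with the crude count $\#L_\varepsilon<\varepsilon^{-1}$ are what make this balance rigorous.
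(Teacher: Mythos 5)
Your proof is essentially correct, but note the context first: this paper does not prove the proposition at all --- it is quoted as Theorem 29 of Avila--Delecroix \cite{AD} and used as a black box --- so the only meaningful comparison is with the argument in \cite{AD}. Your two-step structure reproduces the mechanism that produces the exponent $\frac{\alpha_1}{1+\alpha_1}$ there: the count $\#L_\varepsilon<\varepsilon^{-1}$ of large cylinders, their convexity, and the fast-decay bound $\mu(G_\varepsilon)\leq C_1\varepsilon^{\alpha_1}$, balanced at $\varepsilon=\rho^{1/(1+\alpha_1)}$. Two features of your write-up are worth highlighting as genuinely good choices. First, the reduction to the depth-one gap $G=\Delta\setminus\Delta^1$ via the exact decomposition $\Delta\setminus\Delta^\infty=\bigcup_{n\geq 0}\bigcup_{|\underline{l}|=n}h_{\underline{l}}(G)$ together with countable stability of Hausdorff dimension and the fact that Lipschitz maps do not raise it (bi-Lipschitzness, with constant depending on $\underline{l}$, is harmless here) is a clean shortcut: it dispenses with any uniform, depth-independent control of the inverse branches, which is where a scale-by-scale argument would invoke bounded distortion --- indeed your proof never uses that hypothesis beyond the existence of the measure $\mu$ of Lemma \ref{projectivestronglyexpanding}. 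Second, the convexity of each $\Delta^{(l)}$, which your boundary count $O(\rho^{-(d-2)})$ requires, does hold and deserves one line of justification: in the chart $\{\lambda\geq 0,\ |\lambda|_1=1\}$, the image of the convex hull of $v_1,\dots,v_d$ under the projectivization of a nonnegative invertible matrix $B$ is exactly the convex hull of the points $Bv_i/|Bv_i|_1$, so each cylinder is again a simplex.

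Two small repairs are needed, neither of which affects the outcome. (i) Your trichotomy ``a cube meeting $G$ is contained in $G_\varepsilon$ or meets some $\partial\Delta^{(l)}$, $l\in L_\varepsilon$'' misses cubes straddling $\partial\Delta$: such a cube can meet $G$ while being neither contained in $G_\varepsilon\subset\Delta$ nor forced to cross a large cylinder's boundary. Since $\Delta$ is itself convex, these cubes number $O(\rho^{-(d-2)})$ and are absorbed into your boundary term. (ii) The step $\mathrm{Leb}(G_\varepsilon)\lesssim\mu(G_\varepsilon)$ needs the density of $\mu$ to be bounded below on $\overline{\Delta}$, and the bound $\mu(G_\varepsilon)\leq C_1\varepsilon^{\alpha_1}$ uses $\mu(G)=0$, i.e., that $\{\Delta^{(l)}\}$ is a partition only modulo $0$; both points are covered by Lemma \ref{projectivestronglyexpanding} and the definition of fast decay, but should be said explicitly. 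With these remarks your covering argument gives upper box dimension, hence Hausdorff dimension, of $G$ at most $d-1-\frac{\alpha_1}{1+\alpha_1}$, and the reduction step transfers this to $\Delta\setminus\Delta^\infty$, exactly as claimed.
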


\section{General weak mixing for interval exchange transformations \& translation flows} \label{mainresultsection}

\subsection{Weak mixing for interval exchange transformations}

Let $d > 1$ be an integer and recall that $\mathfrak{S}_d^0:=\mathfrak{S}^0(\{1,\dots,d\})$. Our main result is the following.

\begin{theo}\label{theoprincipal}
Let $\pi \in \mathfrak{S}_d^0$ be an irreducible permutation which is not a rotation; then the set of $[\lambda] \in \mathbb{P}^{d-1}$ such that $f([\lambda],\pi)$ is not weakly mixing has Hausdorff dimension strictly less than $d-1$.
\end{theo}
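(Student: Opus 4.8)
The plan is to combine the Veech criterion (which reduces the absence of weak mixing to the weak-stable lamination containing a line of the form $(t,\dots,t)$ with $t\notin\Z$) with the fast decay / Hausdorff dimension machinery assembled in the Background section. Concretely, following \cite{AF1}, for $\delta>0$ and $m\in\N$ introduce the set $W^s_{\delta,m}([\lambda])$ of vectors $h$ with $\|h\|<\delta$ whose iterates $A_k([\lambda])\cdot h$ stay small modulo $\Z^d$ up to time $m$, and for a line $J$ set $\Gamma^m_\delta(J):=\{[\lambda]\in\Delta : J\cap W^s_{\delta,m}([\lambda])\neq\emptyset\}$. By the Veech criterion, the set of $[\lambda]$ for which $f([\lambda],\pi)$ is not weakly mixing is contained (after renormalization to the simplex $\Delta$) in $\bigcap_m \Gamma^m_\delta(J)$ for $J=\mathrm{Span}(1,\dots,1)$, together with the negligible rationally-dependent parameters and the escaping set $\Delta\setminus\Delta^\infty$. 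The latter two are harmless: the rationally dependent parameters have dimension $d-2<d-1$, and by Proposition \ref{escapingpoints} the escaping set also has dimension strictly below $d-1$.

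The analytic heart of the argument is the large deviation estimate
\begin{equation}\label{planLDE}
\mu\bigl(\Gamma^m_\delta(J)\bigr)\leq C\,e^{-\kappa m}\,\|J\|^{-\rho}
\end{equation}
for constants $C,\kappa,\rho>0$ independent of $m$ and $J$. First I would adapt the two-exponent local analysis of \cite{AF1}: for a segment not passing through the origin, the top Lyapunov exponent of the Rauzy cocycle controls the growth of lengths, while the second positive exponent (present because $g>1$, guaranteed since $\pi$ is not a rotation) produces a transverse drift pushing iterates away from $\Z^d$. Next I would run the global probabilistic step, choosing a finite set $S$ of cocycle matrices such that a typical $[\lambda]$ sees $A(T^k[\lambda])\in S$ with positive frequency, so that matrices in $S$ admit only the trivial child and expand each line by a definite factor. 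The novelty over \cite{AF1} is quantitative: instead of merely showing finite life expectancy almost surely, I would feed these ingredients into the general large deviations result of \cite{AD}, using the bounded distortion (Lemma \ref{boundddd}) and fast decay (Lemma \ref{fastdddd}) of $(T,A)$, to obtain the exponential decay rate \eqref{planLDE}.

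With \eqref{planLDE} in hand, the conversion to Hausdorff dimension is essentially mechanical. For each $m$, I would express $\Gamma^m_\delta(J)$ (or a slight enlargement of it) as a union $X_n$ of cylinders $\Delta^{\underline l}$ of appropriate depth $n\sim m$, so that $\bigcap_m\Gamma^m_\delta(J)\subset\liminf_n X_n=:X$, and then verify the hypothesis $\delta':=\limsup_n -\tfrac1n\ln\mu(X_n)>0$ supplied by \eqref{planLDE}. Theorem \ref{thm:HD} then yields $\mathrm{HD}(X)\leq d-1-\min(\delta',\alpha_1)<d-1$. Since only finitely many combinatorial choices of $J=b^s$-type lines and countably many finite words are involved, a countable union over the relevant lines preserves the strict inequality, and this bounds the dimension of the non-weakly-mixing set.

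I expect the main obstacle to be the derivation of \eqref{planLDE}, and specifically matching the hypotheses of the abstract large deviations theorem of \cite{AD} to the geometry of the weak-stable lamination. The delicate point is uniformity in the line $J$ (the factor $\|J\|^{-\rho}$) together with keeping track of the children-generating process: one must ensure that the number of non-trivial children created before a matrix from $S$ is applied does not proliferate faster than the exponential gain, and that the bounded distortion estimate \eqref{2.3} lets one replace measures of complicated sets by sums over cylinders without losing the exponential rate. Handling the passage between the phase-space pseudo-norm $\|\cdot\|_{\R^d/\Z^d}$ and the linear action on genuine vectors, while controlling the second-exponent drift uniformly, is where the real work lies; the rest of the scheme is bookkeeping around Theorem \ref{thm:HD}.
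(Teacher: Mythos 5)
The central gap is your parenthetical claim that $g>1$ is ``guaranteed since $\pi$ is not a rotation.'' This is false: there exist irreducible non-rotation permutations of genus one, e.g.\ $\pi_3=(3,2,1)$ on three letters (and more generally $\pi_d(j)=d+1-j$ for $d$ odd). For such $\pi$ the restriction of the Rauzy cocycle to $H(\pi)$ has only $g=1$ positive Lyapunov exponent, so the two-exponent local analysis your large deviation estimate rests on (top exponent for growth, second exponent for the transverse drift away from $\Z^d$) is simply unavailable, and Proposition \ref{mainresultprob} cannot be run. The paper accordingly splits the proof into three cases: (a) non-uniquely-ergodic parameters, handled via fast decay and Proposition \ref{escapingpoints}; (b) $(1,\dots,1)\notin H(\pi)$ --- which covers \emph{all} genus-one non-rotations --- handled by a purely arithmetic argument (Theorem \ref{thm111bis}): Veech's Proposition \ref{propveech} shows an eigenfunction forces $t(1,\dots,1)\cdot b^s\in\Z$ for all $s\in\Sigma(\pi)$, while Lemma \ref{easylemma} provides an $s$ with $(1,\dots,1)\cdot b^s=\pm 1$, so $t\in\Z$, a contradiction; (c) $g>1$ \emph{and} $(1,\dots,1)\in H(\pi)$, which is the only case where the probabilistic machinery you describe is used (Theorem \ref{thmA} and Theorem \ref{maintheo}). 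Your proposal collapses (b) and (c) into one case and leaves every permutation with $(1,\dots,1)\notin H(\pi)$ unproved; note that Theorem \ref{thmA} is stated only for $h\in H(\pi)$, so this case genuinely cannot be absorbed into the probabilistic argument.

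A second, more technical slip: you run the estimate with $J=\mathrm{Span}(1,\dots,1)$, but this line passes through the origin, so $\|J\|=0$ and the bound $C e^{-\kappa m}\|J\|^{-\rho}$ is vacuous; indeed $\mathcal{J}$ consists by definition of lines \emph{not} through $0$. The paper's reduction instead covers the bad set by $\mathcal{E}_h$, a countable union over return times $n\geq 0$ and over the lines $J\in\mathcal{J}_h:=\{M\cdot J_h-c\ \vert\ (M,c)\in\mathrm{SL}(d,\Z)\times\Z^d\}\cap\mathcal{J}$, and this requires a separate lemma showing that when $th\notin\Z^d$ the translated iterates $A_n(x)\cdot J_h-c_n(x)$ avoid the origin for infinitely many $n$. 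Your ``finitely many $b^s$-type lines'' is not the right countable family: the vectors $b^s$ belong to the Veech--criterion side of the argument (case (b) above), not to the weak-stable lamination bookkeeping, so this part of your scheme would need to be redone along the paper's lines.
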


As recalled in the introduction, weak mixing for the interval exchange transformation $f$ is equivalent to the non-existence of non-constant measurable solutions $\phi\colon I \to \C$ to the following \text{equation:}
\begin{equation}\label{equcontspec}
\phi(f(x))=e^{2 \pi \mathrm{i} t} \phi(x),\quad x \in I,
\end{equation}
for any $t \in \R$. This is equivalent to the following two conditions:
\begin{itemize}
\item $f$ is ergodic;
\item for any $t \in \R \backslash \Z$, there is no nonzero measurable solution $\phi\colon I \to \C$ to the equation
\begin{equation}\label {phit} 
\phi(f(x))=e^{2 \pi \mathrm{i} t} \phi(x),\quad x \in I.
\end{equation}
\end{itemize}

\noindent Let $\pi$ satisfying the assumptions of Theorem \ref{theoprincipal}. The proof follows three steps:
\begin{enumerate}
\item Using fast decay and results of \cite{Ma1} and \cite{V2}, we show that the set of $[\lambda]$ such that $f([\lambda],\pi)$ is not ergodic does not have full Hausdorff dimension.
\item We then adapt a theorem of Veech \cite{V4} to deal with the case where $(1,\dots,1) \notin H(\pi)$ (the genus-one case is a particular instance of it).
\item The case where $g > 1$ and $(1,\dots,1) \in H(\pi)$ is addressed by adapting the probabilistic argument contained in \cite{AF1} to get some estimate on the Hausdorff dimension of ``bad" parameters.
\end{enumerate}

\subsubsection{Unique ergodicity of interval exchange transformations}\label{subsectionergodicity}

Let $\mathfrak{R} \subset \mathfrak{S}_d^0$ be a Rauzy class, let $\pi \in \mathfrak{R}$. Given $\lambda \in \R_+^{d}$ with rationally independent coordinates, we consider  $f=f(\lambda,\pi)\colon I \to I$. Let us denote by $\mathcal{M}_f:=\{\mu,\ f_* \mu = \mu\}$ the set of invariant measures; recall that the map $\mu \mapsto (\mu(I_i))_{i=1,\dots,d}$ is a linear isomorphism between $\mathcal{M}_f$ and the set $$
\bigcap_{n \geq 1} B_n^R(\lambda,\pi)^* \cdot \R_+^d.
$$
From this fact, Veech (see Proposition 2.13, \cite{V1}) deduces that $f([\lambda],\pi)$ is uniquely ergodic as soon as there exists $n \geq 1$ such that all the coefficients of $B_n^R([\lambda],\pi)$ are positive, and for infinitely many $k\geq 0$, $B_n^R(\mathcal{R}_R^{k}([\lambda],\pi))=B_n^R([\lambda],\pi)$. 

The first point is handled in \cite{MMY}, \textsection 1.2.3-1.2.4. In this work the authors show that for an i.e.t.\  which satisfies \textit{Keane's condition}, there exists an integer $n \geq 1$ such that all the coefficients of $B_n^R([\lambda],\pi)$ are positive. We won't detail the definition of Keane's condition; just recall that it is implied by our assumption on the coordinates of $[\lambda]$ to be rationally independent. Moreover, a result due to Keane states that any i.e.t.\ satisfying this condition is topologically minimal, that is, the orbit of any point is dense. 

Take $n \geq 1$ such that all the coefficients of $B_n^R([\lambda],\pi)$ are positive, and let $\gamma_0 \in \Pi(\mathfrak{R})$, $|\gamma_0|=n$, be the corresponding path in the Rauzy diagram. We also assume that $\gamma_0$ ends at $\pi$. Let $\Delta=\Delta_{\gamma_0}:=B_{\gamma_0}^* \cdot \mathbb{P}_{+}^{d-1}$ be the associate simplex; in particular, it is compactly contained in $\mathbb{P}_{+}^{d-1}$ since all the coefficients of $B_{\gamma_0}$ are positive. By expansiveness of Rauzy renormalization (indeed $\mathcal{R}_R^n(\Delta \times \{\pi\})=\mathbb{P}_+^{d-1} \times \{\pi\}$), every point in $\mathbb{P}_+^{d-1} \times \{\pi\}$ is in the forward orbit of a point in $\Delta \times \{\pi\}$, so it is enough to estimate the dimension of parameters which belong to $\Delta \times \{\pi\}$ and that do not come back infinitely many times to it. As in Subsection \ref{rauzyfast}, we consider the map $T\colon \Delta \to \Delta$ induced by $\mathcal{R}_R$ on $\Delta$; we have seen that it has bounded distortion and that it is fast decaying for some constant $\alpha_1 >0$ (see \eqref{fastdecay1} for the definition). From Proposition \ref{escapingpoints}, we know that the set $\Delta \backslash \Delta^\infty$ of $[\lambda] \in \Delta$ whose iterates do not come back to $\Delta$ infinitely many times has Hausdorff dimension less than $d-1-\frac{\alpha_1}{1+\alpha_1}$. Therefore, we deduce from the above criterion that the set of $[\lambda] \in \Delta$ such that the i.e.t.\ $f([\lambda],\pi)$ is not uniquely ergodic has Hausdorff dimension less than $d-1-\frac{\alpha_1}{1+\alpha_1}$. By the previous remark on expansiveness, we have thus obtained: 
\begin{thm}\label{theouniq}
Let $d>1$ and let $\pi \in \mathfrak{S}_d^0$; the set of $[\lambda] \in \mathbb{P}^{d-1}_+$ such that $f([\lambda],\pi)$ is not uniquely ergodic has Hausdorff dimension strictly less than $d-1$.
\end{thm}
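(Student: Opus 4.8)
The plan is to reduce the global statement on $\mathbb{P}^{d-1}_+$ to a local statement on a well-chosen simplex $\Delta \Subset \mathbb{P}^{d-1}_+$, and then invoke the machinery already assembled in Subsection \ref{rauzyfast} together with Veech's unique ergodicity criterion. First I would fix a Rauzy class $\mathfrak{R} \subset \mathfrak{S}_d^0$ containing $\pi$, and choose a path $\gamma_0 \in \Pi(\mathfrak{R})$ that starts and ends at $\pi$ whose matrix $B_{\gamma_0}$ has all positive entries (such a path exists by a standard connectivity result on Rauzy diagrams). Setting $\Delta:=B_{\gamma_0}^* \cdot \mathbb{P}^{d-1}_+$, this is a simplex compactly contained in $\mathbb{P}^{d-1}_+$, and by construction $\mathcal{R}_R^{|\gamma_0|}$ maps $\Delta \times \{\pi\}$ onto $\mathbb{P}^{d-1}_+ \times \{\pi\}$. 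The crucial observation is that every parameter $[\lambda] \in \mathbb{P}^{d-1}_+$ with rationally independent coordinates eventually enters $\Delta \times \{\pi\}$ under renormalization, so up to a set of dimension $d-2$ (the rationally dependent parameters) it suffices to bound the dimension of the exceptional set inside $\Delta$.

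Next I would bring in the return map $T\colon \Delta \to \Delta$ induced by $\mathcal{R}_R$. By Lemma \ref{boundddd} this map has bounded distortion, and by Lemma \ref{fastdecaylemma} it is fast decaying with some constant $\alpha_1 > 0$. I would then apply Veech's criterion as stated in the excerpt: $f([\lambda],\pi)$ is uniquely ergodic provided all coefficients of $B_n^R([\lambda],\pi)$ are positive for some $n$ (which holds under Keane's condition, hence under rational independence) and provided the matrix $B_n^R$ recurs infinitely often along the renormalization orbit. This second condition is exactly the statement that $[\lambda]$ returns to $\Delta \times \{\pi\}$ infinitely many times, i.e.\ that $[\lambda] \in \Delta^\infty$. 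Therefore the only parameters in $\Delta$ that can fail to be uniquely ergodic are those in the escaping set $\Delta \backslash \Delta^\infty$.

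The dimension bound is then immediate from Proposition \ref{escapingpoints}: since $T$ has bounded distortion, is a projective transformation on each partition element, and is fast decaying with constant $\alpha_1$, that proposition gives
\begin{equation*}
\mathrm{HD}(\Delta \backslash \Delta^\infty) \leq d-1-\frac{\alpha_1}{1+\alpha_1} < d-1.
\end{equation*}
Finally, I would globalize: the full exceptional set in $\mathbb{P}^{d-1}_+$ is contained in the union of the rationally dependent parameters (dimension $d-2$) together with the preimages under finitely many renormalization steps of $\Delta \backslash \Delta^\infty$. Since $\mathcal{R}_R$ acts by projective (hence locally bi-Lipschitz) maps on the relevant pieces, Hausdorff dimension is preserved under these maps, and a countable union does not raise the dimension; hence the global exceptional set also has dimension at most $d-1-\frac{\alpha_1}{1+\alpha_1} < d-1$.

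The step I expect to require the most care is the globalization argument, specifically verifying that pulling $\Delta \backslash \Delta^\infty$ back through $\mathcal{R}_R^{-k}$ does not inflate the Hausdorff dimension and that every non-uniquely-ergodic parameter really is captured this way. The delicate point is that the relevant inverse branches are restrictions of projective contractions, so one must check they are bi-Lipschitz on compactly contained pieces; this is routine but is where the expansiveness of Rauzy renormalization (the fact that $\mathcal{R}_R^{|\gamma_0|}$ expands $\Delta$ onto all of $\mathbb{P}^{d-1}_+$) does the essential work. The analytic heart of the result — the exponential tail estimate underlying fast decay, and its conversion into a dimension bound — has already been packaged into Lemma \ref{fastdecaylemma} and Proposition \ref{escapingpoints}, so the proof itself is essentially an assembly of these ingredients.
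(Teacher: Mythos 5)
Your local analysis inside $\Delta$ — the choice of $\gamma_0$, the application of Veech's unique ergodicity criterion to points of $\Delta^\infty$, and the bound $\mathrm{HD}(\Delta\backslash\Delta^\infty)\leq d-1-\frac{\alpha_1}{1+\alpha_1}$ from Proposition \ref{escapingpoints} — matches the paper. The gap is your ``crucial observation'' and the globalization built on it. It is false that every $[\lambda]$ with rationally independent coordinates eventually enters $\Delta\times\{\pi\}$ under renormalization: entering $\Delta\times\{\pi\}$ at time $k$ means that the Rauzy path of $([\lambda],\pi)$ follows the specific block $\gamma_0$ starting at time $k$, and an infinite $\infty$-complete path need not contain a prescribed finite block. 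Worse, your covering of the exceptional set by the sets $\mathcal{R}_R^{-k}\left((\Delta\backslash\Delta^\infty)\times\{\pi\}\right)$ fails precisely when the theorem has content. Indeed, either the set of non-uniquely ergodic parameters over $\pi$ with rationally independent coordinates is empty (in which case the theorem is immediate, the exceptional set being contained in the rationally dependent set of dimension $d-2$), or there exists such a parameter $[\mu]$; its orbit can visit $\Delta\times\{\pi\}$ at most finitely often (infinitely many visits would place some renormalization of it in $\Delta^\infty$, forcing unique ergodicity by Veech's criterion together with the invariance of unique ergodicity under Rauzy induction). If it visits at all, let $k$ be the last visit and set $([\mu''],\pi):=\mathcal{R}_R^{k+n}([\mu],\pi)$, which is again over $\pi$ since $\gamma_0$ ends at $\pi$. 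Rational independence is preserved by $\mathrm{GL}(d,\Z)$, unique ergodicity is preserved by induction (the paper's isomorphism $\mathcal{M}_f\simeq\bigcap_{m\geq 1}B_m^R(\lambda,\pi)^*\cdot\R_+^d$ identifies the invariant cones of an i.e.t.\ and of its renormalizations by a linear isomorphism), so $[\mu'']$ is a non-uniquely ergodic, rationally independent parameter over $\pi$ whose orbit \emph{never} visits $\Delta\times\{\pi\}$. Such a point lies in none of your preimages, so the inclusion you need is genuinely wrong, not merely unproven.

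The correct globalization — the one the paper uses — goes in the opposite direction, and you in fact state the needed ingredient in your first paragraph before abandoning it: by (\ref{eqqqr}), $\mathcal{R}_R^n$ restricted to $\Delta\times\{\pi\}$ is the projective diffeomorphism $[\lambda']\mapsto[(B_{\gamma_0}^*)^{-1}\cdot\lambda']$, which maps $\Delta$ \emph{onto} all of $\mathbb{P}_+^{d-1}$. Thus every parameter is in the forward orbit \emph{of} a point of $\Delta$, rather than eventually mapped \emph{into} $\Delta$. Given any non-uniquely ergodic $[\lambda]\in\mathbb{P}_+^{d-1}$ with rationally independent coordinates, its (unique) preimage $[\lambda']\in\Delta$ under this map is again rationally independent and non-uniquely ergodic, hence lies in $\Delta\backslash\Delta^\infty$ by Veech's criterion. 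So the exceptional set is contained in the union of the rationally dependent set and the image of $\Delta\backslash\Delta^\infty$ under a single smooth map; such a map is Lipschitz on the compact set $\overline{\Delta}$ and therefore does not increase Hausdorff dimension, giving the bound $\max\left(d-2,\ d-1-\frac{\alpha_1}{1+\alpha_1}\right)<d-1$. With this one-step push-forward replacing your pullback covering, the rest of your assembly coincides with the paper's proof.
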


\subsubsection{Veech criterion for weak mixing}

An important ingredient to handle the case where $t \notin \Z$ in the second and the third steps detailed above is provided by the so-called \textit{Veech criterion} for weak mixing, whose statement we now recall.

\begin{thm}[Veech, \cite {V4}, \textsection 7]
\label{veechcriterion}
For any Rauzy class $\mathfrak{R}\subset \mathfrak{S}_d^0$ there exists an open set $U_{\mathfrak{R}} \subset \mathbb{P}^{d-1}_+\times \mathfrak{R}$ with the following property.  Assume that the orbit of $([\lambda],\pi) \in \mathbb{P}^{d-1}_+\times \mathfrak{R}$
under  the Rauzy induction map $\mathcal{R}_R$ visits $U_{\mathfrak{R}} $ infinitely many times. If there exists a non-constant measurable solution $\phi\colon I \to \C$ to the equation

\begin{equation}\label {phih}
\phi(f(x))=e^{2 \pi \mathrm{i} t h_j} \phi(x)\, ,  \quad x\in I_{j}^\lambda,
\end{equation}
with $t \in \R$, $h \in \R^d$, then
\begin{equation*}
\lim_{\ntop{n \to \infty} {\mathcal{R}_R^n([\lambda],\pi) \in U_{\mathfrak{R}}}}\,
\|B_n^R([\lambda],\pi) \cdot t h\|_{\R^d/\Z^d}=0.
\end{equation*}

\end{thm}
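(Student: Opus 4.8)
The plan is to show that the twisted cohomological equation \eqref{phih} is, up to the factor $e^{2\pi\mathrm{i}(\cdot)}$, \emph{invariant} under Rauzy induction, with the phase vector $t h$ transported by the Rauzy cocycle $B_n^R$, and then to convert the hypothesis of infinitely many returns to a well-chosen $U_{\mathfrak R}$ into integrality of the transported phases by an approximate-continuity argument. First I would reduce to the case $|\phi|\equiv 1$: since we always assume the coordinates of $[\lambda]$ rationally independent, $f$ satisfies Keane's condition and is minimal, so the $f$-invariant function $|\phi|$ is a.e.\ constant and may be normalized to $1$; in particular the induced intervals shrink, $\ell_n\to 0$.

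The bookkeeping step is the heart of the matter. Let $\gamma$ be the path followed by $\mathcal{R}_R^n$ from $([\lambda],\pi)$ to $([\lambda^{(n)}],\pi^{(n)})$, and let $f^{(n)}=f(\lambda^{(n)},\pi^{(n)})$ be the first-return map of $f$ to $I^{(n)}=[0,\ell_n)$. I would check that $\phi^{(n)}:=\restriction{\phi}{I^{(n)}}$ is again a non-constant measurable solution of the twisted equation for $f^{(n)}$, with new phase vector $B_n^R([\lambda],\pi)\cdot t h$. Concretely, a point of the induced subinterval $I_j^{(n)}$ returns to $I^{(n)}$ after visiting each original $I_\beta$ exactly $(B_n^{R*})_{\beta j}$ times, because the Rauzy matrix is the visitation matrix: the relation $\lambda=B_n^{R*}\lambda^{(n)}$ from \eqref{eqqqr} reads $\lambda_\beta=\sum_j (B_n^{R*})_{\beta j}\,\lambda^{(n)}_j$, which is exactly the tiling of $I_\beta$ by the return orbits. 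Summing the phase $t h_\beta$ along the return orbit over $I_j^{(n)}$ then yields $(B_n^R\cdot t h)_j$. Since $e^{2\pi\mathrm{i}(\cdot)}$ depends only on its argument modulo $\Z$, the obstruction $\|B_n^R\cdot t h\|_{\R^d/\Z^d}$ is \emph{precisely} the defect-from-integrality of the phases of the induced eigenfunction $\phi^{(n)}$, and the theorem becomes the assertion that these phases tend to $0$ along the returns to $U_{\mathfrak R}$.

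I would then argue by contradiction. Rescaling $I^{(n)}$ to $[0,1)$ via $\psi_n(y):=\phi(\ell_n y)$ produces a unit-modulus eigenfunction of the normalized i.e.t.\ $f([\lambda^{(n)}],\pi^{(n)})$ with phases $g^{(n)}:=B_n^R\cdot t h \bmod \Z^d$. If the conclusion failed, then along the return times $n_k$ with $\mathcal{R}_R^{n_k}([\lambda],\pi)\in U_{\mathfrak R}$ one would have $\inf_k \|g^{(n_k)}\|_{\R^d/\Z^d}>0$. Here $U_{\mathfrak R}$ is chosen first of all to be precompact, $U_{\mathfrak R}\Subset \mathbb{P}^{d-1}_+\times\mathfrak R$; together with the finiteness of $\mathfrak R$ this lets me pass to a subsequence along which $[\lambda^{(n_k)}]\to[\lambda^\infty]$, $\pi^{(n_k)}\equiv\pi^\infty$, and $g^{(n_k)}\to c^\infty$ with $c^\infty\neq 0$. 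Precompactness also forces \emph{bounded geometry}: the subintervals $I_j^{(n_k)}$ all have length comparable to $\ell_{n_k}$.

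The final step, which is where the real difficulty lies, is to force $c^\infty$ to be integral and reach a contradiction. I would engineer $U_{\mathfrak R}$ not merely to be precompact but to carry a precise combinatorial recurrence property guaranteeing a \emph{synchronization}: a base subinterval a definite piece of whose image, under a bounded number of further controlled iterates, clusters around the same point as the base. Invoking Lebesgue density / approximate continuity of $\phi$ at such a point, the values of $\phi$ on the base and on the synchronized image both tend to the same limit, which forces the corresponding coordinate of $e^{2\pi\mathrm{i}\,c^\infty}$ to equal $1$; carrying this out across the coordinates contradicts $c^\infty\neq 0$. The main obstacle is exactly the non-degeneracy of this comparison: a naive weak-$*$ limit of the $\psi_{n_k}$ may vanish, since a non-constant measurable eigenfunction can oscillate away all of its mass, so one cannot simply pass to a limiting non-trivial eigenfunction of $f([\lambda^\infty],\pi^\infty)$. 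It is precisely to bypass this degeneracy that the open set $U_{\mathfrak R}$ must be built with the right geometric/combinatorial structure rather than taken to be an arbitrary precompact set, so that the approximate-continuity argument applies at a genuine density point and pins down the phase.
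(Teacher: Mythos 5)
A preliminary remark on the comparison itself: the paper does not prove Theorem~\ref{veechcriterion} at all — it is imported as a black box from Veech \cite{V4}, \S 7 — so your attempt can only be measured against Veech's original argument, not against anything in this text. Your bookkeeping step is correct and is indeed the standard entry point: since $\lambda=B_\gamma^*\lambda^{(n)}$ (equation \eqref{eqqqr}) exhibits $B_\gamma^*$ as the visitation matrix of the return orbits, the restriction of $\phi$ to $I^{(n)}$ solves the twisted equation for the induced i.e.t.\ with phase vector $B_n^R([\lambda],\pi)\cdot th$, and the theorem is exactly the statement that these phases tend to $0$ in $\R^d/\Z^d$ along the returns to $U_{\mathfrak R}$.

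There are, however, two genuine gaps. A local one first: you normalize $|\phi|\equiv 1$ by invoking minimality (Keane's condition). Minimality is topological and does not control measurable invariant functions; that requires ergodicity, which is not a hypothesis of the theorem, and minimal non-ergodic i.e.t.'s exist. This is fixable (pass to $\phi/|\phi|$ on the invariant positive-measure set $\{\phi\neq 0\}$ and run the argument there), but as written the reduction is unjustified. The decisive gap is the last step: the entire content of Veech's theorem is the construction of $U_{\mathfrak R}$ and the mechanism that pins the limiting phase $c^\infty$ to $\Z^d$, and your proposal leaves precisely this as a declaration of intent (``I would engineer $U_{\mathfrak R}$ \dots to carry a precise combinatorial recurrence property''). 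Moreover the naive form of your density-point argument cannot work where you place it: the induced intervals $I^{(n)}$ all shrink to the left endpoint $0$ of $I$, a single null point at which approximate continuity of $\phi$ is not guaranteed, so the comparison of values of $\phi$ on a base subinterval and on its ``synchronized'' image must be transported away from $0$, up the Rokhlin towers over the $I_j^{(n)}$ (whose heights are unbounded), to genuine density points in the bulk of $I$ — with the accumulated phase errors controlled uniformly along a tower. It is exactly to make that propagation quantitative that $U_{\mathfrak R}$ is taken with specific structure (e.g.\ a simplex compactly contained in $\mathbb{P}^{d-1}_+$ coming from a return path with strictly positive matrix, giving bounded geometry of the towers), and none of this is carried out in the proposal. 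You correctly identify this as ``where the real difficulty lies,'' but identifying the difficulty is not overcoming it: what you have established is the (standard) transport formula and the contradiction setup, not the theorem.
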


In the context of weak mixing of i.e.t.'s, it is especially interesting to apply Veech criterion in the particular case where $h=(1,\dots,1)$ (see Equation (\ref{equcontspec})).

\subsubsection{Analysis of the case where ${(1,\dots,1) \notin H(\pi)}$}\label{subs11}

Let us recall a theorem due to Veech and which states almost sure weak mixing in the case where $(1,\dots,1) \notin H(\pi)$.

\begin{thm}[Veech, \cite{V4}, \textsection 1-6]\label{thm111veech}
Let $\pi \in \mathfrak{S}_d^0$ and suppose that $(1,\dots,1) \not\in H(\pi)$. Then for almost every $\lambda \in \R_+^d$, $f(\lambda,\pi)$ is weakly mixing.
\end{thm}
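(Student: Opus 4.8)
The plan is to reduce weak mixing to the non-existence of eigenvalues $e^{2\pi\mathrm{i}t}$ with $t\notin\Z$, apply Veech's criterion with $h=(1,\dots,1)$, and then use the hypothesis $(1,\dots,1)\notin H(\pi)$ as a source of arithmetic rigidity forcing $t\in\Z$. Recall from the discussion after Theorem~\ref{theoprincipal} that $f([\lambda],\pi)$ is weakly mixing if and only if it is ergodic and, for every $t\in\R\setminus\Z$, equation (\ref{phit}) has no nonzero measurable solution. By the theorem of Masur and Veech (Theorem~\ref{invmeasureRauzy} and the ergodicity it yields), $f([\lambda],\pi)$ is ergodic for almost every $\lambda$, so it suffices to rule out eigenvalues. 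First I would fix a Rauzy class $\mathfrak{R}\ni\pi$ and the open set $U_{\mathfrak{R}}$ of Theorem~\ref{veechcriterion}; since $U_{\mathfrak{R}}$ is nonempty and open it has positive measure, so by conservativity and ergodicity of the invariant measure of $\mathcal{R}_R$ (Theorem~\ref{invmeasureRauzy}), for almost every $\lambda$ the orbit of $([\lambda],\pi)$ visits $U_{\mathfrak{R}}$ infinitely often. For such $\lambda$, if (\ref{phit}) had a nonzero solution for some $t\in\R$, then taking $h=(1,\dots,1)$ in Theorem~\ref{veechcriterion} would give
\begin{equation}\label{eq:veechvv}
\lim_{\ntop{n\to\infty}{\mathcal{R}_R^n([\lambda],\pi)\in U_{\mathfrak{R}}}}\bigl\|B_n^R([\lambda],\pi)\cdot t(1,\dots,1)\bigr\|_{\R^d/\Z^d}=0.
\end{equation}

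The heart of the matter is to show that (\ref{eq:veechvv}) forces $t\in\Z$. Write $v:=(1,\dots,1)$. Since $B_n^R$ has integer entries, $B_n^R v\in\Z^d$, and (\ref{eq:veechvv}) means we may write $B_n^R\cdot tv=k_n+\varepsilon_n$ with $k_n\in\Z^d$ and $\|\varepsilon_n\|\to0$ along the visits to $U_{\mathfrak{R}}$. The obstruction comes from the singularity vectors $b^s$ of Subsection~\ref{invariantsubspace}: because $(1,\dots,1)\notin H(\pi)$, the characterization of $H(\pi)$ together with Lemma~\ref{easylemma} furnishes a distinguished singularity $s_0\in\Sigma(\pi)$ (the class separating $0$ and $d$ under $\sigma_\pi$) with $\langle b^{s_0},v\rangle=\pm1$. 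This property is preserved along the Rauzy orbit, since $0\sim d$ is a combinatorial invariant of $\sigma_{\pi^{(n)}}$; hence each $\pi^{(n)}$ carries a distinguished singularity $s_0^{(n)}\in\Sigma(\pi^{(n)})$ with $\langle b^{s_0^{(n)}},v\rangle=\pm1$, whose entries lie in $\{-1,0,1\}$, so that $\|b^{s_0^{(n)}}\|$ is bounded uniformly in $n$.

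The key geometric input, which I expect to be the main obstacle, is the equivariance of these vectors under the Rauzy cocycle. Because the zippered rectangle construction attaches the \emph{same} translation surface to $([\lambda],\pi)$ and to its renormalizations, the bijection $\Sigma(\pi)\cong\Sigma(\pi^{(n)})$ is realized by the cocycle; concretely, $(B_n^R)^*$ sends $H(\pi^{(n)})^{\perp}=\mathrm{Span}\{b^{s'}\}$ to $H(\pi)^{\perp}=\mathrm{Span}\{b^{s}\}$, and from the definitions of Subsections~\ref{invariantsubspace}--\ref{sectionrauzyrenor} one should check that it maps the distinguished class of $\pi^{(n)}$ to that of $\pi$ up to sign and up to interior classes pairing trivially with $v$. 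Granting this,
\begin{equation*}
\bigl\langle b^{s_0^{(n)}},\,B_n^R v\bigr\rangle=\bigl\langle (B_n^R)^* b^{s_0^{(n)}},\,v\bigr\rangle=\pm\langle b^{s_0},v\rangle=\pm1 .
\end{equation*}
This is the crucial point: the pairing stays a nonzero integer of modulus $1$ while $b^{s_0^{(n)}}$ stays bounded.

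Granting the equivariance, the conclusion is immediate. Pairing $b^{s_0^{(n)}}$ with $B_n^R\cdot tv=k_n+\varepsilon_n$ gives
\begin{equation*}
\pm t=\bigl\langle b^{s_0^{(n)}},\,B_n^R\cdot tv\bigr\rangle=\bigl\langle b^{s_0^{(n)}},k_n\bigr\rangle+\bigl\langle b^{s_0^{(n)}},\varepsilon_n\bigr\rangle,
\end{equation*}
where $\langle b^{s_0^{(n)}},k_n\rangle\in\Z$ and $\langle b^{s_0^{(n)}},\varepsilon_n\rangle\to0$ by the uniform bound on $\|b^{s_0^{(n)}}\|$. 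Since $\mathrm{dist}(\pm t,\Z)=\mathrm{dist}(t,\Z)$, we get $\mathrm{dist}(t,\Z)=0$, i.e. $t\in\Z$. Thus for almost every $\lambda$ there is no eigenvalue $e^{2\pi\mathrm{i}t}$ with $t\notin\Z$; combined with almost sure ergodicity this shows that $f(\lambda,\pi)$ is weakly mixing for almost every $\lambda$. Everything outside the equivariance of the $b^s$ and the preservation of the distinguished singularity is formal given Theorems~\ref{invmeasureRauzy} and~\ref{veechcriterion}, so the genuine work lies in establishing that single structural statement from the zippered rectangle geometry.
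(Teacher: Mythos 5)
Your overall strategy is the correct one, and it is the same as the paper's: for this statement the paper itself gives no proof (it cites Veech), and its proof of the stronger Theorem \ref{thm111bis} follows exactly your outline — almost sure ergodicity, Veech's criterion with $h=(1,\dots,1)$, Lemma \ref{easylemma}, and the arithmetic of the singularity vectors $b^s$. Your closing limit argument is correct \emph{provided} the pairing identity $\langle b^{s_0^{(n)}},B_n^R\cdot v\rangle=\pm1$ holds with $\|b^{s_0^{(n)}}\|$ uniformly bounded. But that identity is precisely the content of Proposition \ref{propveech} (Veech's Proposition 6.5), which the paper imports as a citation; you neither cite it nor prove it — you explicitly ``grant'' the equivariance of the vectors $b^s$ under $(B_n^R)^*$ and concede that it is the main obstacle. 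As written, the proposal therefore reduces the theorem to an unproved structural statement in which all the substance of the result is concentrated; that is a genuine gap, not a formality.

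Moreover, the route you sketch toward that statement contains a false step: the claim that ``$0\sim d$ is a combinatorial invariant of $\sigma_{\pi^{(n)}}$,'' so that every $\pi^{(n)}$ along the orbit carries a class pairing to $\pm1$ with $v=(1,\dots,1)$. The condition $(1,\dots,1)\notin H(\pi)$ is \emph{not} preserved by Rauzy moves. Concretely, for $d=3$ take $\tilde\pi=(3,2,1)$ in monodromy notation: $\sigma_\pi$ has orbits $\{0,2\}$ and $\{1,3\}$, so $0\not\sim 3$ and $\Upsilon(\pi)=\{(1,-1,1),(-1,1,-1)\}$; but $\underline{t}(\pi)$ is the rotation $\tilde\pi'=(2,3,1)$, whose $\sigma_{\pi'}$ has orbits $\{0,2,3\}$ and $\{1\}$, so $0\sim 3$, $\Upsilon(\underline{t}(\pi))=\{(1,-1,0),(-1,1,0)\}$, and every $b^{s'}$ pairs to $0$ with $v$. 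This happens inside the very Rauzy class to which the theorem applies ($\pi_3=(3,2,1)$ is the paper's genus-one example), so it is not a marginal case: ``the distinguished class of $\pi^{(n)}$'' in your sense need not exist, and with it your formulation of the equivariance becomes ill-posed. What does survive is the equivariance itself, defined through the correspondence of singularities of the common suspension surface: in the example, with $B=\mathbf{1}_3+E_{13}$ one has $B^*=\mathbf{1}_3+E_{31}$, $B^*\cdot(1,-1,0)=(1,-1,1)$, so $(B^R)^*$ carries $\Upsilon(\underline{t}(\pi))$ bijectively onto $\Upsilon(\pi)$, and $\langle(1,-1,0),B\cdot v\rangle=1$ even though $\langle(1,-1,0),v\rangle=0$. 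In other words, $s_0^{(n)}$ must be defined via this cocycle correspondence, not via its pairing with $v$, and the quantity that stays equal to $\pm1$ is the pairing with $B_n^R\cdot v$, never with $v$. The repair is either to prove this equivariance from the zippered-rectangle construction (Veech \cite{V4}, \textsection 5--6) or, as the paper does, to quote Proposition \ref{propveech} directly; with that input your limit argument closes the proof.
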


We want to improve this result in the following way.

\begin{theo}\label{thm111bis}
Let $\pi \in \mathfrak{S}_d^0$ and assume that $(1,\dots,1) \notin H(\pi)$. Then the set of $[\lambda] \in \mathbb{P}_+^{d-1}$ such that $f([\lambda],\pi)$ is not weakly mixing has Hausdorff dimension strictly less than $d-1$.
\end{theo}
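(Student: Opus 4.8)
The plan is to upgrade Veech's almost-sure statement (Theorem \ref{thm111veech}) to a Hausdorff-dimension statement by the same fast-decay mechanism used for unique ergodicity in Theorem \ref{theouniq}. First I would reduce matters to parameters in a good simplex $\Delta = B_{\gamma_0}^* \cdot \mathbb{P}_+^{d-1} \Subset \mathbb{P}_+^{d-1}$, chosen so that $\gamma_0$ has a strictly positive matrix and starts and ends at some fixed $\pi \in \mathfrak{R}$, exactly as in Subsection \ref{subsectionergodicity}; by expansiveness of $\mathcal{R}_R$ it suffices to bound the dimension of bad parameters inside $\Delta$. On this simplex I would use the induced map $T\colon \Delta \to \Delta$ together with the locally constant cocycle $(T,A)$, which by Lemmas \ref{boundddd} and \ref{fastdddd} has bounded distortion and is fast decaying with constant $\alpha_1 > 0$.

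The key step is to exploit the hypothesis $(1,\dots,1) \notin H(\pi)$ directly. By Veech's criterion (Theorem \ref{veechcriterion}), a non-weakly-mixing parameter $[\lambda]$ whose orbit recurs to the Veech set $U_{\mathfrak{R}}$ infinitely often, and for which a nonconstant eigenfunction with eigenvalue $e^{2\pi \mathrm{i} t}$, $t \in \R\setminus\Z$, exists, must satisfy $\|B_n^R([\lambda],\pi)\cdot t(1,\dots,1)\|_{\R^d/\Z^d} \to 0$ along the subsequence of returns to $U_{\mathfrak{R}}$. Here I expect Veech's argument to show that because $(1,\dots,1)$ has a nonzero component transverse to $H(\pi)$ — equivalently, its image under the cocycle has an unbounded integral component governed by Lemma \ref{easylemma} — the vector $B_n^R \cdot t(1,\dots,1)$ cannot remain close to $\Z^d$ unless $t \in \Z$, a contradiction. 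The point is that the obstruction is essentially arithmetic and does not require the delicate Lyapunov-exponent analysis of the genus $> 1$, $(1,\dots,1) \in H(\pi)$ case; the failure set is already controlled by recurrence behavior rather than by a drift estimate on the weak-stable lamination.

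Concretely, I would argue that the only parameters $[\lambda] \in \Delta$ that can fail weak mixing while being ergodic are those whose Rauzy orbits do \emph{not} visit $U_{\mathfrak{R}}$ infinitely often, since for the recurrent ones Veech's criterion forces $t \in \Z$ and hence only the constant eigenfunctions. Combining this with the unique-ergodicity control, the bad set is contained in the union of the non-uniquely-ergodic set and the set of parameters escaping $U_{\mathfrak{R}}$. The former has dimension $< d-1$ by Theorem \ref{theouniq}. For the latter I would realize ``escaping $U_{\mathfrak{R}}$'' as a $\liminf$ of unions of cylinders $\Delta^{\underline{l}}$ avoiding the finitely many combinatorial patterns that guarantee a visit to $U_{\mathfrak{R}}$, so that the measure of the level-$n$ set decays exponentially; Theorem \ref{thm:HD} (or the escaping-set bound of Proposition \ref{escapingpoints}) then gives Hausdorff dimension strictly below $d-1$.

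The main obstacle I anticipate is making precise the step ``$(1,\dots,1)\notin H(\pi)$ forces $t\in\Z$''. One must translate Veech's statement that the eigenvalue-parameter $t h$ has shrinking integral distance into a genuine rigidity: since $(1,\dots,1)$ is not in the span $H(\pi)$ of the cocycle-invariant homology, its iterates under $B_n^R$ pick up a growing contribution in the complementary direction tracked by the vectors $b^s$ of Lemma \ref{easylemma}, and one needs that this contribution is an integer that must stay controlled, pinning $t$ to $\Z$. Carefully justifying that this forces membership in a fast-decaying escaping set — rather than merely a measure-zero set as in Veech's original argument — is where the fast-decay framework of \cite{AD} must be invoked, and is the technical heart of the proof.
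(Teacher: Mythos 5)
Your proposal follows essentially the same route as the paper: restrict to a simplex $\Delta \Subset U_{\mathfrak{R}}$ on which the induced map has bounded distortion and fast decay, split the bad set into the non-ergodic part (Theorem \ref{theouniq}) and the part not recurring to $U_{\mathfrak{R}}$ (Proposition \ref{escapingpoints}), and for recurrent parameters combine Veech's criterion with the arithmetic rigidity you flag as the technical heart — which is exactly Proposition \ref{propveech} (Veech's Proposition 6.5, quoted rather than reproved in the paper) — together with Lemma \ref{easylemma}'s value $(1,\dots,1)\cdot b^s=\pm 1$ to pin $t$ to $\Z$ and reach a contradiction. The only caveat is that this last step needs no new fast-decay input, contrary to your closing worry: fast decay enters solely through the dimension bound on the escaping set, while the rigidity itself is a deterministic statement already available in Veech's work.
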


\noindent The proof follows \cite{V4}. Fix a Rauzy class $\mathfrak{R} \subset \mathfrak{S}_d^0$, let $\pi \in \mathfrak{R}$ and let $[\lambda] \in \mathbb{P}_{+}^{d-1}$. 

\begin{prop}[Proposition 6.5, Veech \cite {V4}]\label{propveech}
With notations and assumptions as above, suppose that $h \in \R^d$ is such that there exists an infinite set $E$ of natural numbers satisfying
\begin{equation}\label{hypothesis}
\lim_{\ntop{n \to \infty}{n \in E}} e^{2 \pi \mathrm{i} (B_n^R([\lambda],\pi) \cdot h)_j} = 1,\quad 1 \leq j \leq d.
\end{equation}
Then $h \cdot b^s \in \Z$ for all $s \in \Sigma(\pi)$, where $b^s$ is defined as in Subsection \ref{invariantsubspace}.
\end{prop}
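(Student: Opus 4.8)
The plan is to exploit a covariance property of the ``singularity vectors'' $b^s$ under the transpose of the Rauzy cocycle, together with the elementary observation that each $b^s$ has entries in $\{-1,0,1\}$, hence is a uniformly bounded \emph{integer} vector. Write $\pi^{(n)}$ for the endpoint of the Rauzy path followed by $\mathcal{R}_R^n$ applied to $([\lambda],\pi)$. Since Rauzy induction leaves the suspended surface $S$ unchanged, its singularities are intrinsic and there is a natural bijection $\Sigma(\pi)\to\Sigma(\pi^{(n)})$, $s\mapsto s^{(n)}$. The heart of the argument is the identity
\begin{equation*}
\bigl(B_n^R([\lambda],\pi)\bigr)^{*}\, b^{s^{(n)}}=b^{s},\qquad s\in\Sigma(\pi),
\end{equation*}
which, once available, yields at once $h\cdot b^{s}=\bigl(B_n^R([\lambda],\pi)\cdot h\bigr)\cdot b^{s^{(n)}}$ for every $n$.

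First I would establish this covariance. By the cocycle relation it suffices to treat a single step, i.e.\ $\gamma$ an elementary top or bottom arrow, and then compose. For a top arrow one has $B^R=\mathbf{1}_d+E_{\alpha(b)\,\alpha(t)}$, so $(B^R)^{*}=\mathbf{1}_d+E_{\alpha(t)\,\alpha(b)}$ acts on a vector by adding its $\alpha(b)$-coordinate to its $\alpha(t)$-coordinate and fixing the rest (symmetrically for a bottom arrow). I would then compare this with the way the permutation $\sigma_\pi$, and hence the partition $\Sigma(\pi)=\{0,\dots,d\}/{\sim}$ and the characteristic functions $\chi_s$ entering $b^s_i=\chi_s(i-1)-\chi_s(i)$, transform under $\underline{t}$ and $\underline{b}$. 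Matching these two combinatorial descriptions step by step gives the displayed equality; this bookkeeping is the part I expect to be the main obstacle, since it requires tracking precisely how the $d+1$ marked points $\{0,\dots,d\}$ and their $\sigma_\pi$-orbits are relabeled by a Rauzy move. As a consistency check, the relation $B^R\cdot H(\pi)=H(\pi^{(1)})$ dualizes to the statement that $(B^R)^{*}$ carries $\mathrm{Span}\,\Upsilon(\pi^{(1)})$ onto $\mathrm{Span}\,\Upsilon(\pi)$ (using the annulator description $h\in H(\pi)\iff h\cdot b^s=0$), which confirms that the correspondence above is the right one at the level of spans.

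Granting the covariance, I would conclude as follows. Fix $s\in\Sigma(\pi)$. By \eqref{hypothesis}, along $E$ each coordinate of $B_n^R([\lambda],\pi)\cdot h$ approaches an integer, so I may write
\begin{equation*}
B_n^R([\lambda],\pi)\cdot h=m_n+\varepsilon_n,\qquad m_n\in\Z^d,\ \varepsilon_n\to 0\quad(n\in E).
\end{equation*}
Then
\begin{equation*}
h\cdot b^{s}=\bigl(B_n^R([\lambda],\pi)\cdot h\bigr)\cdot b^{s^{(n)}}=m_n\cdot b^{s^{(n)}}+\varepsilon_n\cdot b^{s^{(n)}}.
\end{equation*}
The first term is an integer, being the pairing of two integer vectors, while the entries of $b^{s^{(n)}}$ lie in $\{-1,0,1\}$, so $\lvert\varepsilon_n\cdot b^{s^{(n)}}\rvert\le d\,\lVert\varepsilon_n\rVert_\infty\to 0$. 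Since the left-hand side is independent of $n$, letting $n\to\infty$ in $E$ exhibits $h\cdot b^{s}$ as a limit of integers, whence $h\cdot b^{s}\in\Z$. As $s\in\Sigma(\pi)$ was arbitrary, this proves the proposition.
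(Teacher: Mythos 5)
The paper itself gives no proof of Proposition \ref{propveech}: it is quoted directly from Veech \cite{V4}, and Veech's argument does run along the lines you propose, namely an equivariance of the singularity vectors $b^s$ under the transposed Rauzy cocycle followed by a limit argument. Your deduction of the conclusion \emph{from} the covariance identity is complete and correct: \eqref{hypothesis} lets you write $B_n^R([\lambda],\pi)\cdot h = m_n+\varepsilon_n$ with $m_n\in\Z^d$ and $\varepsilon_n\to 0$ along $E$; the pairing with $b^{s^{(n)}}$ splits into an integer plus an error of size at most $d\,\|\varepsilon_n\|_\infty$, because every $b^{s'}$ has entries in $\{-1,0,1\}$; and since $h\cdot b^s$ is independent of $n$ and $\Z$ is closed, $h\cdot b^s\in\Z$.

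The genuine gap is that the covariance identity $\bigl(B_n^R([\lambda],\pi)\bigr)^{*}b^{s^{(n)}}=b^{s}$ --- which is the entire nontrivial content of the proposition --- is asserted but never established; you yourself defer the combinatorial bookkeeping as ``the main obstacle''. Moreover, the consistency check you offer cannot close it: dualizing $B^R\cdot H(\pi)=H(\pi^{(1)})$ only shows that $(B^R)^{*}$ maps $\mathrm{Span}\,\Upsilon(\pi^{(1)})$ onto $\mathrm{Span}\,\Upsilon(\pi)$, which is much weaker than the vector identity. Indeed $\sum_{s}b^s=0$, so $\mathrm{Span}\,\Upsilon(\pi)$ has dimension $\#\Sigma(\pi)-1$; e.g.\ for $d=3$ and $\tilde\pi(j)=4-j$ one gets $\Sigma(\pi)=\{\{0,2\},\{1,3\}\}$ and $\Upsilon(\pi)=\{(1,-1,1),(-1,1,-1)\}$, two antipodal vectors spanning a line, so a span-level statement cannot even fix signs or scalings. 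More to the point, what your limit argument actually requires is that $v_n:=\bigl((B_n^R)^{*}\bigr)^{-1}b^s$ be an integer vector with entries bounded \emph{uniformly in $n$}: integrality is automatic since $B_n^R\in\mathrm{SL}(d,\Z)$, but without uniform boundedness the error term $\varepsilon_n\cdot v_n$ need not tend to zero, and boundedness is exactly what the exact identity (giving $v_n=b^{s^{(n)}}$, entries in $\{-1,0,1\}$) provides and a span argument cannot. The identity is true --- it is the content of Veech's analysis in \cite{V4}, \S 5, and it does check out arrow by arrow (for a top arrow, $\mathbf{1}_d+E_{\alpha(t)\alpha(b)}$ carries $\Upsilon(\underline{t}(\pi))$ bijectively onto $\Upsilon(\pi)$, once the relabeling of the $\sigma_\pi$-orbits is tracked) --- so your plan would succeed if carried out; as written, however, the proof's central step is missing.
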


Let $U_\mathfrak{R} \subset \mathbb{P}_+^{d-1} \times \mathfrak{R}$ be the open set involved in the statement of Veech criterion (Theorem \ref{veechcriterion}).
Reasoning as in Subsection \ref{subsectionergodicity}, we get that for $n \geq 0$ sufficiently large, there exists a connected component $\Delta \times \{\pi\} \subset U_\mathfrak{R}$ of the domain of definition of $\mathcal{R}_R^n$ which is labeled by a path $\gamma_0 \in \Pi(\mathfrak{R})$ of length $n$ starting and ending at $\pi$ and such that the matrix $B_{\gamma_0}$ has all its entries positive. In particular, the corresponding simplex $\Delta=B_{\gamma_0}^* \cdot \mathbb{P}_{+}^{d-1}$ is compactly contained in $\mathbb{P}_+^{d-1}$. 
By expansiveness of Rauzy renormalization we know that it is sufficient to estimate the dimension of non-recurrent parameters which belong to $\Delta$. As in Subsection \ref{rauzyfast}, let $T\colon \Delta \to \Delta$
be the map induced by $\mathcal{R}_R$ on $\Delta$; we have seen that it has bounded distortion and fast decay. By Proposition \ref{escapingpoints}, we deduce that the set $\Delta \backslash \Delta^\infty$ of $[\lambda] \in \Delta$ such that the orbit of $([\lambda],\pi)$ under $\mathcal{R}_R$ visits $\Delta\times \{\pi\}$ finitely many times has Hausdorff dimension strictly less than $d-1$. Veech criterion together with Proposition \ref{propveech} thus imply:

\begin{thm}\label{theorem63veech}
Let $\pi \in \mathfrak{S}_d^0$; for every $[\lambda]$ in a set whose complement in $\mathbb{P}_+^{d-1}$ has Hausdorff dimension strictly less than $d-1$, and for all $h \in \R^d$, if $f=f([\lambda],\pi)$ satisfies
\begin{equation}\label{equation62veech}
\phi(f(x))=e^{2\pi \mathrm{i} h_j} \phi(x),\quad x \in I_j^{\lambda},
\end{equation}
for some non-constant measurable function $\phi\colon I \to \mathbb{C}$, then $h \cdot b^s \in \Z$ for all $s \in \Sigma(\pi)$.
\end{thm}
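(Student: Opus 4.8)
The plan is to combine the two ingredients just recalled, Veech's criterion (Theorem~\ref{veechcriterion}) and Proposition~\ref{propveech}, and to absorb the non-recurrent parameters into the escaping set controlled by Proposition~\ref{escapingpoints}. Call $[\lambda]\in\mathbb{P}_+^{d-1}$ \emph{bad} if there exist $h\in\R^d$ and a non-constant measurable $\phi\colon I\to\C$ satisfying \eqref{equation62veech} while $h\cdot b^s\notin\Z$ for some $s\in\Sigma(\pi)$. First I would show that no parameter whose $\mathcal{R}_R$-orbit visits $U_{\mathfrak{R}}$ infinitely often can be bad; since $\Delta\times\{\pi\}\subset U_{\mathfrak{R}}$, it follows that every bad parameter has an orbit meeting $\Delta\times\{\pi\}$ only finitely often, and it then remains to bound the Hausdorff dimension of this last set.

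For the first step, fix $[\lambda]$ whose orbit visits $U_{\mathfrak{R}}$ infinitely often and suppose it carries a non-constant solution $\phi$ of \eqref{equation62veech} for some $h$. Taking $t=1$ in Veech's criterion (Theorem~\ref{veechcriterion}) yields
$$
\lim_{\ntop{n\to\infty}{\mathcal{R}_R^n([\lambda],\pi)\in U_{\mathfrak{R}}}}\ \|B_n^R([\lambda],\pi)\cdot h\|_{\R^d/\Z^d}=0.
$$
Letting $E$ be the infinite set of integers $n$ with $\mathcal{R}_R^n([\lambda],\pi)\in U_{\mathfrak{R}}$, each coordinate of $B_n^R([\lambda],\pi)\cdot h$ converges to $\Z$ along $E$, so $e^{2\pi\mathrm{i}(B_n^R([\lambda],\pi)\cdot h)_j}\to 1$ for every $j$; this is exactly hypothesis \eqref{hypothesis} of Proposition~\ref{propveech}. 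That proposition then forces $h\cdot b^s\in\Z$ for all $s\in\Sigma(\pi)$, contradicting the badness of the witness $(h,\phi)$. Hence such a parameter is not bad.

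It remains to control dimensions. By the previous step the bad set is contained in the set of $[\lambda]$ whose orbit meets $\Delta\times\{\pi\}$ finitely often. Inside $\Delta$ this set is precisely $\Delta\setminus\Delta^\infty$, and since $T$ has bounded distortion and is fast decaying, Proposition~\ref{escapingpoints} gives $\mathrm{HD}(\Delta\setminus\Delta^\infty)<d-1$. To reach all of $\mathbb{P}_+^{d-1}$ I would argue exactly as in Subsection~\ref{subsectionergodicity}: the branch of $\mathcal{R}_R^{|\gamma_0|}$ on $\Delta\times\{\pi\}$ is the inverse of the projective contraction $B_{\gamma_0}^*$, hence a smooth map on the compactly contained simplex $\overline\Delta$, so it is Lipschitz there and does not increase Hausdorff dimension. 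Any $[\lambda']\in\mathbb{P}_+^{d-1}$ whose orbit meets $\Delta\times\{\pi\}$ finitely often is the image under this branch of its unique preimage in $\Delta$, whose orbit differs from that of $[\lambda']$ by finitely many terms and so lies in $\Delta\setminus\Delta^\infty$. The set of such $[\lambda']$ therefore has dimension $<d-1$, and with it the bad set, which is the assertion of the theorem.

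The computations are routine; the single structural point on which everything rests is the \emph{simultaneous} choice of the loop $\gamma_0$ made just before the statement, with all entries of $B_{\gamma_0}$ positive and $\Delta\times\{\pi\}\subset U_{\mathfrak{R}}$. Positivity forces $\Delta\Subset\mathbb{P}_+^{d-1}$, which is what makes $T$ strongly expanding and fast decaying (so that Proposition~\ref{escapingpoints} applies) and makes the expanding branch Lipschitz (so that the dimension transfer is legitimate), while the inclusion $\Delta\times\{\pi\}\subset U_{\mathfrak{R}}$ is what lets recurrence to $\Delta$ trigger Veech's criterion. Everything else is the mechanical passage from $\|\cdot\|_{\R^d/\Z^d}\to 0$ to the exponential-sum condition \eqref{hypothesis}.
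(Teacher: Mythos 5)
Your proof is correct and follows essentially the same route as the paper's: Veech's criterion applied with $t=1$ feeding into Proposition \ref{propveech} for parameters whose orbit visits $\Delta\times\{\pi\}\subset U_{\mathfrak{R}}$ infinitely often, and Proposition \ref{escapingpoints} (transferred from $\Delta$ to all of $\mathbb{P}_+^{d-1}$ via the expanding branch of $\mathcal{R}_R^{|\gamma_0|}$) for the remaining non-recurrent set. The paper states both bridging steps tersely ("by expansiveness of Rauzy renormalization" and "Veech criterion together with Proposition \ref{propveech} thus imply"); your Lipschitz-image argument and the passage from $\|B_n^R([\lambda],\pi)\cdot h\|_{\R^d/\Z^d}\to 0$ to condition \eqref{hypothesis} are precisely the intended details.
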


The analogous statement was a key ingredient in the proof by Veech of Theorem \ref{thm111veech}. 

\noindent \textit{Proof of Theorem \ref{thm111bis}.} 
Take $\pi \in \mathfrak{S}_d^0$, and assume that it satisfies $(1,\dots,1)\not\in H(\pi)$. By definition, $H(\pi)=\mathrm{Span}(\Upsilon(\pi))^\perp$, hence for some $s \in \Sigma(\pi)$, we have $(1,\dots,1) \cdot b^s \in \mathbb{Z} \backslash\{0\}$ (recall that $b^s$ has integral coordinates). 
Then, Lemma \ref{easylemma} tells us that in fact, $(1,\dots,1) \cdot b^s=\pm 1$.

Let $[\lambda]$ belong to the subset of $\mathbb{P}_+^{d-1}$ for which the conclusion of Theorem \ref{theorem63veech} holds. We know that its complement does not have full Hausdorff dimension. If $f=f([\lambda],\pi)$ is not weakly mixing, then either $f$ is not ergodic or else there exists some nonintegral $t \in \R$ and a nontrivial measurable solution to Equation (\ref{equation62veech}) for $f$ and $h:= (t,\dots,t)$. The first case was handled in Subsection \ref{subsectionergodicity}. Assume now that we are in the second case. By choosing $s$ as previously, Theorem \ref{theorem63veech} implies that $t (1,\dots,1) \cdot b^s=h \cdot b^s \in \Z$. But we also have $(1,\dots,1) \cdot b^s=\pm 1$, and by definition, $t \in \mathbb{R}\backslash \mathbb{Z}$, a contradiction.\qed\\

The case where $g=1$ is contained in the preceding theorem. Indeed, it is easy to see that under the hypothesis that $\pi$ is not a rotation, our study amounts to considering the case where $\pi$ is as \text{follows:}
\begin{equation*}
\pi(1) = 3,\ \pi(2)=2,\ \pi(3)=1,
\end{equation*}
that is $\pi=\pi_3$, where $\pi_d(j):=d+1-j,\ 1\leq j \leq d$. But for $d$ odd, we check that $(1,\dots,1) \notin H(\pi_d)$.

Theorem \ref{thm111bis} enables us to improve a previous result due to Katok and Stepin in \cite{KS1}. Here is the new statement.

\begin{theo}
If $g=1$ and $\pi$ is an irreducible permutation which is not a rotation, then $f([\lambda],\pi)$ is weakly mixing except for a set of $[\lambda]$ of Hausdorff dimension strictly less than $d-1$.
\end{theo}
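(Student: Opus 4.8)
The plan is to reduce the statement to the situation already treated in Theorem \ref{thm111bis}. Concretely, I would show that the hypotheses $g(\pi)=1$ and ``$\pi$ is not a rotation'' together force $(1,\dots,1)\notin H(\pi)$; once this is established, Theorem \ref{thm111bis} applies verbatim and yields that the set of $[\lambda]$ for which $f([\lambda],\pi)$ fails to be weakly mixing has Hausdorff dimension strictly less than $d-1$. Thus the only genuine content is a purely combinatorial lemma about $\pi$, the entire dynamical part of the argument being outsourced to the previous theorem.

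To produce the relevant criterion I would combine the proposition identifying $H(\pi)$ with the annihilator of $\mathrm{Span}(\Upsilon(\pi))$ and Lemma \ref{easylemma}. The former gives $(1,\dots,1)\in H(\pi)$ if and only if $(1,\dots,1)\cdot b^s=0$ for every $s\in\Sigma(\pi)$; the latter evaluates $(1,\dots,1)\cdot b^s$ to be $+1$, $-1$ or $0$ according to whether the orbit $s$ contains exactly one of $0,d$, or both or neither. Hence $(1,\dots,1)\in H(\pi)$ \emph{if and only if} $0$ and $d$ lie in the same orbit of $\sigma_\pi$, so the goal becomes the combinatorial assertion: for $\pi$ irreducible with $g(\pi)=1$ and not a rotation, one has $0\not\sim d$.

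Next I would exploit the genus constraint. The same proposition gives $\dim H(\pi)=d+1-\#\Sigma(\pi)=2g(\pi)$, so $g(\pi)=1$ is equivalent to $\#\Sigma(\pi)=d-1$, i.e. $\sigma_\pi$ has exactly $d-1$ orbits on the $d+1$ points $\{0,\dots,d\}$. Writing the orbit sizes as $n_1,\dots,n_{d-1}$ one gets $\sum(n_i-1)=2$, so either a single orbit has three elements and the rest are fixed points of $\sigma_\pi$, or two orbits have two elements each and the rest are fixed points. In the second case $0\sim d$ would require $\{0,d\}$ itself to be an orbit, that is $\sigma_\pi(0)=d$; but by definition $\sigma_\pi(0)=\tilde\pi^{-1}(1)-1\le d-1$, which rules this out, so $0\not\sim d$ automatically. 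Therefore $0\sim d$ can occur only in the three-element-orbit case, where the distinguished orbit is $\{0,d,y\}$ for some $y\in\{1,\dots,d-1\}$.

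The main obstacle is precisely this last case. Here I would unwind the definition of $\sigma_\pi$ — using $\sigma_\pi(0)=\tilde\pi^{-1}(1)-1$, the identity $\sigma_\pi(\tilde\pi^{-1}(d))=d$, and the requirement that every element other than $0,d,y$ be a fixed point of $\sigma_\pi$ — to pin down $\tilde\pi$ explicitly and recognize it as the cyclic permutation $\pi(i+1)\equiv\pi(i)+1 \bmod d$, contradicting the hypothesis that $\pi$ is not a rotation. By contrast, the model case $\pi=\pi_3$ falls under the easy two-orbit alternative: $\sigma_{\pi_3}$ has orbits $\{0,2\}$ and $\{1,3\}$, so $0\not\sim 3$ and $(1,\dots,1)\notin H(\pi_3)$ at once. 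I expect the bookkeeping in the three-cycle case, rather than any dynamical input, to be the only delicate point; everything else is a direct application of Lemma \ref{easylemma}, the genus formula, and Theorem \ref{thm111bis}.
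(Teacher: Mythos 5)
Your proposal is correct, and its skeleton --- establish $(1,\dots,1)\notin H(\pi)$ and then quote Theorem \ref{thm111bis} --- matches the paper's; but the way you establish that hypothesis is genuinely different. The paper disposes of the combinatorial step by asserting (``it is easy to see'') that for $g=1$ and $\pi$ not a rotation the study amounts to the single model permutation $\pi_3$, for which $(1,1,1)\notin H(\pi_3)$ is checked directly; making that reduction precise requires merging adjacent letters $i$ with $\tilde\pi(i+1)=\tilde\pi(i)+1$ and tracking how the parameter space, and hence the Hausdorff dimension bound, behaves under the induced map $\mathbb{P}_+^{d-1}\to\mathbb{P}_+^{d-2}$ --- details the paper leaves implicit. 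You instead prove, uniformly in $d$, the general criterion: irreducible $+$ genus one $+$ $0\sim d$ (equivalently $(1,\dots,1)\in H(\pi)$, by the annihilator description of $H(\pi)$ and Lemma \ref{easylemma}) forces $\pi$ to be a rotation. Your case analysis is right, and the delicate three-cycle case does close up exactly as you predict: if the orbit is $\{0,y,d\}$ with all other points fixed, then since $\sigma_\pi(i)=d$ occurs only for $i=\tilde\pi^{-1}(d)$ while $\sigma_\pi(0)=\tilde\pi^{-1}(1)-1\le d-1$, one gets $y=\sigma_\pi(0)=\tilde\pi^{-1}(1)-1$ and $y=\tilde\pi^{-1}(d)$, hence $\tilde\pi(y)=d$ and $\tilde\pi(y+1)=1$; the fixed-point conditions give $\tilde\pi(i+1)=\tilde\pi(i)+1$ for $1\le i\le d-1$, $i\ne y$; and $\sigma_\pi(d)=0$ (the third case applies at $i=d$ because irreducibility forbids $\tilde\pi(d)=d$) gives $\tilde\pi(1)=\tilde\pi(d)+1$; altogether $\tilde\pi(i+1)\equiv\tilde\pi(i)+1\pmod d$ cyclically, i.e.\ $\pi$ is a rotation, the desired contradiction. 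So your route trades the paper's short but only sketched reduction to $d=3$ for a self-contained, dimension-uniform combinatorial lemma; it buys rigor and generality at the cost of a slightly longer, though entirely elementary, orbit analysis of $\sigma_\pi$.
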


\subsubsection{Analysis of the case where ${g>1}$ and ${(1,\dots,1) \in H(\pi)}$}\label{genresup1}

To prove Theorem \ref{theoprincipal}, it remains to deal with the case where $g>1$ and $(1,\dots,1) \in H(\pi)$. The reason why we wanted to get rid of the genus one case is the following: the restriction of the Rauzy cocycle to $H(\pi)$ has $g$ positive Lyapunov exponents; in particular we have at least two positive exponents when $g > 1$. As explained in the outline, this fact is central in the argument developed in \cite{AF1} and that we are going to adapt in what follows. By Veech criterion, the result we want to prove is implied by the following \text{theorem:}

\begin{thm} \label{thmA}

Let $\mathfrak{R}\subset \mathfrak{S}_d^0$ be a Rauzy class with $g>1$, let $\pi \in \mathfrak{R}$ and let $h \in H(\pi)$. Let $U \subset \mathbb{P}^{d-1}_+\times \mathfrak{R}$ be any open set. Then the set of $[\lambda] \in \mathbb{P}^{d-1}_+$ satisfying
\begin{equation}\label{equlambdath}
\limsup_{\ntop {n \to \infty} {\mathcal{R}_R^n([\lambda],\pi) \in U}}
\|B_n^R([\lambda],\pi) \cdot t h\|_{\R^d/\Z^d}=0
\end{equation}
for some $t \in \R$ such that $t h \not\in \Z^d$ has Hausdorff dimension strictly less than $d-1$.

\end{thm}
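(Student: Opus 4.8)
The plan is to reduce everything to the induced cocycle $(T,A)$ on a simplex and then to run the ``finite life expectancy'' scheme of \cite{AF1} quantitatively, feeding the resulting exponential volume decay into Theorem \ref{thm:HD}. First I would choose, exactly as in Subsections \ref{subsectionergodicity} and \ref{subs11}, a path $\gamma_0 \in \Pi(\mathfrak{R})_\pi$ with $B_{\gamma_0}$ strictly positive and with $\Delta := B_{\gamma_0}^*\cdot \mathbb{P}_+^{d-1}$ so small that $\Delta\times\{\pi\}\subset U$; by expansiveness of $\mathcal{R}_R$ it is enough to bound the dimension of the bad set inside $\Delta$. On $\Delta$ we have the induced map $T$, strongly expanding with bounded distortion, together with the locally constant, integral, fast decaying cocycle $(T,A)$ with $\restriction{A}{\Delta_\gamma}=B_\gamma$. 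After inducing, every return to $\Delta$ is a visit to $U$, so condition \eqref{equlambdath} becomes the statement that the weak-stable lamination $W^s([\lambda])\subset H(\pi)$ of $(T,A)$ meets the line $\R h$ at a point which is not in $\Z^d$. The crucial structural input is that, since $g>1$, the restriction of the cocycle to $H(\pi)$ has at least two positive Lyapunov exponents.

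Next I would introduce, for $\delta>0$ and $m\in\N$, the local sets $W_{\delta,m}([\lambda])$ of vectors of norm $<\delta$ whose iterates stay small for $\|\cdot\|_{\R^d/\Z^d}$ up to time $m$, and for a segment $J$ of $\R h$ lying in the ball of radius $\delta$ and avoiding the origin, the parameter set $\Gamma_\delta^m(J):=\{[\lambda]\in\Delta : J\cap W_{\delta,m}^s([\lambda])\neq\emptyset\}$. Because the cocycle is locally constant, whether $J\cap W_{\delta,m}^s([\lambda])\neq\emptyset$ depends only on the first $m$ symbols of the itinerary of $[\lambda]$; hence $\Gamma_\delta^m(J)$ is a union of cylinders $\Delta^{\underline l}$ with $|\underline l|=m$, and $\Gamma_\delta^\infty(J):=\bigcap_m \Gamma_\delta^m(J)=\liminf_m\Gamma_\delta^m(J)$. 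The dynamics is analysed through the children process of \cite{AF1}: the top exponent drives the growth of the length of the iterates of $J$, while the gap to the second (still positive) exponent produces a drift kicking the iterates away from $\Z^d$, and a fixed finite set $S$ of matrices guarantees that whenever $A(T^k([\lambda]))\in S$ the only child is trivial and each such step moves the line away from the origin by a definite factor.

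The heart of the proof, and the step I expect to be the main obstacle, is the quantitative estimate
\[
\mu(\Gamma_\delta^m(J)) \le C\, e^{-\kappa m}\, \norm{J}^{-\rho}
\]
for constants $C,\kappa,\rho>0$ independent of $m$ and $J$. This requires turning the qualitative ``finite life expectancy'' argument of \cite{AF1} into a large deviations statement: using bounded distortion, one estimates the conditional probability that a surviving segment spawns a non-trivial surviving child, and controls the cumulative drift produced by the blocks on which the chosen matrix lies in $S$. I would organise these contributions so that survival up to time $m$ forces either an atypical frequency of returns hitting $S$ or an atypically large cocycle growth, both of which are exponentially rare by the general large deviations result of \cite{AD} (the $\norm{J}^{-\rho}$ factor records the number of local pieces needed to start the process and affects only the constant, not the rate).

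Finally I would feed this into Theorem \ref{thm:HD} with $X_m=\Gamma_\delta^m(J)$: the bound above gives $\limsup_m -\tfrac1m\ln\mu(\Gamma_\delta^m(J))\ge\kappa>0$, the polynomial factor being irrelevant to the exponential rate, whence $\mathrm{HD}(\Gamma_\delta^\infty(J))\le d-1-\min(\kappa,\alpha_1)<d-1$ with a bound uniform in $J$. Covering $\R h\setminus\Z^d$ by countably many such segments $J$ (for instance dyadic pieces of $\R h\cap B(0,1/k)$ over all $k\ge 1$) and using that, by recurrence near the origin, every bad $[\lambda]$ lies in some $\Gamma_\delta^\infty(J)$, the bad set inside $\Delta$ is contained in a countable union of sets of Hausdorff dimension at most $d-1-\min(\kappa,\alpha_1)$. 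Countable stability of Hausdorff dimension together with the expansiveness reduction back to $\mathbb{P}_+^{d-1}$ then yield the desired bound $\mathrm{HD}<d-1$.
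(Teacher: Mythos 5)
Your reduction to the induced cocycle $(T,A)$ on $\Delta$, the exponential estimate $\mu(\Gamma_\delta^m(J)) \le C e^{-\kappa m}\|J\|^{-\rho}$, and its conversion into a dimension bound via Theorem \ref{thm:HD} all coincide with the paper's argument (Proposition \ref{mainresultprob} and Subsection \ref{proofoftheomaintheo}). The gap is in your final covering step, where you cover $\R h \setminus \Z^d$ by countably many segments $J \subset \R h$ and assert that ``by recurrence near the origin, every bad $[\lambda]$ lies in some $\bigcap_m \Gamma_\delta^m(J)$.'' This is false in general: badness of $[\lambda]$ means $\|A_n([\lambda])\cdot th\|_{\R^d/\Z^d} \to 0$, which controls the iterates of the single point $th$ only \emph{eventually}, and only \emph{modulo integer translations $c_n \in \Z^d$ that vary with $n$}. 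It produces no point $w \in \R h \cap B_\delta(0)$ whose iterates stay $\delta$-close to $\Z^d$ from time $0$ on: the point $th$ itself may have norm larger than $\delta$, its early iterates may wander far from $\Z^d$ before converging, and no other point $sh$ of the line inherits any smallness from $th$. Hence the bad set is not contained in $\bigcup_{J \subset \R h} \bigcap_m \Gamma_\delta^m(J)$, however the segments are chosen. Note also that the full line $\R h$ passes through the origin, so it does not belong to $\mathcal{J}$, and the children process (whose drift mechanism requires lines avoiding $0$) cannot be run on it or on its pieces from time $0$.

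The paper's fix, absent from your writeup, is to start the process at a large time: for bad $x$ there is $n_0(x)$ such that for $n \ge n_0(x)$ one has $A_n(x)\cdot th - c_n(x) \in W_\delta^s(T^n(x))$, so the relevant line is the image $J_{n,h}(x) := A_n(x)\cdot J_h - c_n(x)$, and $x \in T^{-n}\bigl(\bigcap_m \Gamma_\delta^m(J_{n,h}(x))\bigr)$. These image lines range over the countable family $\mathcal{J}_h = \{M\cdot J_h - c\ \vert\ (M,c)\in \mathrm{SL}(d,\Z)\times\Z^d\}\cap\mathcal{J}$, so one sets $\mathcal{E}_h := \bigcup_{n \ge 0,\ J \in \mathcal{J}_h} \bigcap_m T^{-n}(\Gamma_\delta^m(J))$ and concludes by $T$-invariance of $\mu$ and countable stability of Hausdorff dimension. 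Two ingredients are then required which your argument never touches: (i) a lemma guaranteeing that for infinitely many $n$ the line $J_{n,h}(x)$ does \emph{not} pass through the origin, so that it lies in $\mathcal{J}$, $\|J_{n,h}(x)\|>0$, and Proposition \ref{mainresultprob} gives a finite bound --- this is precisely where the hypothesis $th \notin \Z^d$ enters, via integrality of $A_n(x)\in\mathrm{SL}(d,\Z)$ and the expansion property (\ref{3.2})--(\ref{3.3}) of $\Theta$; and (ii) the pullbacks $T^{-n}$, harmless for the measure estimates but indispensable for the containment. As written, your proof makes no essential use of $th \notin \Z^d$, which is itself a warning sign: for $th \in \Z^d\setminus\{0\}$ condition (\ref{equlambdath}) is satisfied by every $[\lambda]$ whose orbit visits $U$ infinitely often, so any argument insensitive to that hypothesis cannot be complete.
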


For the problem of weak mixing of i.e.t.'s, and as we have already said, we are especially interested in applying this result to $h=(1,\dots,1)$.
Moreover, the above statement deals with $th \notin \Z^d$; indeed, the case where $(t,\dots,t) \in \Z^d$ is related to Equation (\ref{equcontspec}) with $t$ an integer, and this part of the study was carried out in Subsection \ref{subsectionergodicity}.

\begin{proof}
Fix $\pi \in \mathfrak{R}$ and $h \in H(\pi)$. Let $U \subset \mathbb{P}^{d-1}_+\times \mathfrak{R}$ be any open set and assume that $[\lambda] \in \mathbb{P}^{d-1}_+$ satisfies (\ref{equlambdath}) for some $t \in \R$ such that $t h \not\in \Z^d$. We will reduce the statement to an analogous one, but for a cocycle with better properties. 

We may assume that $U$ intersects $\mathbb{P}^{d-1}_+ \times \{\pi\}$. As in Subsection \ref{subs11}, we take a connected component $\Delta \times \{\pi\} \subset U$ of the domain of $\mathcal{R}_R^n$, $n \gg 1$, such that $\Delta \Subset \mathbb{P}_+^{d-1}$. We have seen that the set of points whose orbit under $\mathcal{R}_R$ does not visit $\Delta \times \{\pi\}$ infinitely many times does not have full Hausdorff dimension so that we can restrict ourselves to points in the complement of this set. 
Let $T\colon \Delta \to \Delta$ be the map induced by $\mathcal{R}_R$ on $\Delta$.
From Lemma \ref{boundddd} we know that $T$ has bounded distortion, which will be crucial for the following.

Let $\Delta^\infty$ denote the subset of $[\lambda] \in \Delta$ to which we can apply $T$ infinitely many times.\footnote{Its complement does not have full Hausdorff dimension by Proposition \ref{escapingpoints}.} For every $[\lambda] \in \Delta^\infty$, let
\begin{equation*}
A([\lambda]):=\restriction{B_{r([\lambda])}^R([\lambda],\pi)}{H(\pi)},
\end{equation*}
where $r([\lambda])$ denotes the first-return time of $([\lambda],\pi)$ to $\Delta \times \{\pi\}$ under $\mathcal{R}_R$. Lemma \ref{fastdddd} tells us that the cocycle $(T,A)$ is fast decaying.
Moreover, $(T,A)$ is locally
constant, integral and log-integrable (the last point follows from fast decay of $A$ as was remarked in Subsection \ref{fastdecaydefin}). 
We also see that $\Theta:=\overline {\mathbb{P}^{d-1}_+}$ is \textit{adapted} to $(T,A)$, that is $A^{(l)} \cdot \Theta \subset \Theta$ for all $l$ and for every $[\lambda]$, we have
\begin{equation}\label{3.2}
    \|A([\lambda]) \cdot w\| \geq \|w\|,
\end{equation}
and
\begin{equation}\label{3.3}
    \|A_n([\lambda]) \cdot w \| \to \infty
\end{equation}
whenever $w \in \R_+^d \backslash \{0\}$ projectivizes to an element of $\Theta$. Indeed, $w\in \R_+^d\backslash \{0\}$, and the coefficients of the matrices $(A_n([\lambda]))_n$ are positive and go to infinity (see \cite{MMY}, \textsection 1.2.3-1.2.4). 

In terms of the cocycle $(T,A)$, Equation (\ref{equlambdath}) can be rewritten:
\begin{equation}\label{equcontrad}
\lim_{n \to \infty} \|A_n
([\lambda]) \cdot th\|_{\R^d/\Z^d}=0, \quad \text {for
some } t \in \R \text { such that } t h \notin \Z^d.
\end{equation}
Let us denote by $J_h$ the line spanned by $h$. In particular, for each ``bad" parameter $[\lambda]$ we have $J_h \cap (W^s([\lambda])\backslash \Z^d) \neq \emptyset$, where $$W^s([\lambda]):=\{w \in \mathbb{R}^d,\ \|A_n([\lambda]) \cdot w \|_{\R^d/\Z^d} \to 0\}$$ denotes the weak-stable space for $\{A_n([\lambda])\}_{n \geq 0}$.
Let then
\begin{equation*}
\mathcal{E}_{t,h} := \{[\lambda] \in \Delta,\ th \in W^s([\lambda])\}.
\end{equation*}

\noindent In Section \ref{proofoftheomaintheo} we exhibit a set $ \mathcal{E}_h$ depending only on $h$ (in fact on the line $J_h$) and such that $\bigcup\limits_{t\in \R,\ th \not\in \Z^d} \mathcal{E}_{t,h} \subset \mathcal{E}_h$. Theorem \ref{thmA} follows from the next result, whose proof is deferred to Subsection \ref{proofoftheomaintheo}.

\begin{theo}\label{maintheo}
We have $\mathrm{HD}(\mathcal{E}_h) < d-1.$
\end{theo}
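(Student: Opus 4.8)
The plan is to follow the two-step strategy announced in the outline: first establish an exponential large deviation estimate for the measure of the ``survivor'' sets, and then feed this into the dimension-from-measure conversion of Theorem \ref{thm:HD}. Fix the line $J_h$ spanned by $h$ and, following \cite{AF1}, for $\delta>0$ and $m\in\N$ introduce the set $W_{\delta,m}^s([\lambda])$ of vectors $w$ with $\|w\|<\delta$ whose iterates $A_k([\lambda])\cdot w$ remain of small $\|\cdot\|_{\R^d/\Z^d}$-pseudonorm for all $0\leq k\leq m$, and for any line $J$ set
\begin{equation*}
\Gamma_\delta^m(J):=\{[\lambda]\in\Delta^\infty : J\cap W_{\delta,m}^s([\lambda])\neq\emptyset\}.
\end{equation*}
Because $(T,A)$ is locally constant and the defining conditions of $W_{\delta,m}^s$ only involve $A_0,\dots,A_m$, the map $[\lambda]\mapsto W_{\delta,m}^s([\lambda])$ is constant on each cylinder $\Delta^{\underline{l}}$ with $|\underline{l}|=m$; hence each $\Gamma_\delta^m(J)$ is a union of level-$m$ cylinders, which is exactly the form required by Theorem \ref{thm:HD} (the escaping set $\Delta\backslash\Delta^\infty$ is negligible for dimension by Proposition \ref{escapingpoints}). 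Moreover the $\Gamma_\delta^m(J)$ decrease in $m$, so $\bigcap_m\Gamma_\delta^m(J)=\liminf_m\Gamma_\delta^m(J)$.

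The analytic core, which I expect to be the main obstacle, is the large deviation bound
\begin{equation*}
\mu(\Gamma_\delta^m(J))\leq C e^{-\kappa m}\|J\|^{-\rho},\quad m\geq 0,
\end{equation*}
for constants $C,\kappa,\rho>0$ independent of $J$ and $m$. To prove it I would adapt the probabilistic argument of \cite{AF1}, decoupling the ``local'' behaviour near the origin from the ``global'' recurrence. The local analysis exploits the hypothesis $g>1$: the restriction of the Rauzy cocycle to $H(\pi)$ has at least two positive Lyapunov exponents, the largest driving the expansion of the length of iterated segments and the second producing a transverse drift that pushes them away from $\Z^d$, so that the genealogy of children of a segment tends to terminate. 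The global analysis upgrades the almost-sure statement of \cite{AF1} to a quantitative one: choosing a finite set $S$ of matrices with controlled action (each forcing only a trivial child together with a definite expansion away from the origin), one estimates the proportion of times $k\leq m$ at which $A(T^k([\lambda]))\in S$. Feeding the resulting Birkhoff-type deviation into the general large deviations theorem of \cite{AD}, together with the bounded distortion of $T$ (Lemma \ref{boundddd}) and the fast decay of $(T,A)$ (Lemma \ref{fastdddd}), yields the claimed exponential decay; the factor $\|J\|^{-\rho}$ merely records the initial length of the segment.

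With the estimate in hand, Theorem \ref{thm:HD} applies to $X_m:=\Gamma_\delta^m(J)$, whose associated rate satisfies $\limsup_m -\tfrac1m\ln\mu(X_m)\geq\kappa$; crucially the $\|J\|^{-\rho}$ prefactor affects only lower-order terms and leaves this exponential rate unchanged, so we obtain the \emph{uniform} bound
\begin{equation*}
\mathrm{HD}\Big(\bigcap_m\Gamma_\delta^m(J)\Big)\leq d-1-\min(\kappa,\alpha_1)<d-1
\end{equation*}
for every line $J$, with $\alpha_1$ the fast-decay constant of $T$.

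Finally I would assemble $\mathcal{E}_h$ from the genealogy of $J_h$. If $th\in W^s([\lambda])$ with $th\notin\Z^d$, then along the orbit of $[\lambda]$ the vectors $A_n([\lambda])\cdot th$ approach $\Z^d$; subtracting the nearby integer vector produces, at an appropriate renormalization time $n$, a descendant line $J'$ near the origin whose small representative lies in $W_{\delta,m}^s(T^n([\lambda]))$ for every $m$, so that $T^n([\lambda])\in\bigcap_m\Gamma_\delta^m(J')$. The collection of descendant lines and of inverse branches of the $T^n$ is countable, so defining $\mathcal{E}_h$ as the countable union $\bigcup_{n,J'}(T^n)^{-1}\big(\bigcap_m\Gamma_\delta^m(J')\big)$ yields a set containing $\bigcup_{t,\,th\notin\Z^d}\mathcal{E}_{t,h}$. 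Since each inverse branch of $T^n$ is the restriction of a projective contraction, hence bi-Lipschitz on $\overline{\Delta}$ and dimension-preserving, and since Hausdorff dimension is stable under countable unions, the uniform bound of the previous step propagates to give $\mathrm{HD}(\mathcal{E}_h)\leq d-1-\min(\kappa,\alpha_1)<d-1$, as desired.
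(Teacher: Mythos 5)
Your overall architecture matches the paper's: the large-deviation estimate $\mu(\Gamma_\delta^m(J)) \leq Ce^{-\kappa m}\|J\|^{-\rho}$ (Proposition \ref{mainresultprob}), its conversion into $\mathrm{HD}\left(\bigcap_m \Gamma_\delta^m(J)\right) \leq d-1-\min(\kappa,\alpha_1)$ via Theorem \ref{thm:HD}, and a countable union over return times $n$ and over the countable family of lines $M\cdot J_h - c$, $(M,c)\in \mathrm{SL}(d,\Z)\times\Z^d$. (Your treatment of the preimages $T^{-n}(\cdot)$ by bi-Lipschitz inverse branches is a legitimate variant; the paper instead uses $T$-invariance of $\mu$ to get the same exponential rate for $\mu(T^{-n}(\Gamma_\delta^m(J)))$ and re-applies Theorem \ref{thm:HD}.)

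However, there is a genuine gap in your assembly step. All of the dimension estimates apply only to lines $J \in \mathcal{J}$, i.e.\ lines \emph{not passing through the origin}: if $0\in J$, then $0 \in J \cap W^s_{\delta,m}(x)$ for every $x$ and every $m$, so $\Gamma_\delta^m(J) = \Delta$ identically, and both the large-deviation bound (whose right-hand side contains $\|J\|^{-\rho}$) and the dimension bound become vacuous. Now for $x \in \mathcal{E}_{t,h}$ the descendant line at time $n$ is $J_{n,h}(x) = A_n(x)\cdot J_h - c_n(x)$, and this passes through the origin precisely when $c_n(x)$ is proportional to $A_n(x)\cdot h$ --- for instance whenever $c_n(x)=0$ --- which nothing in your argument excludes; a priori this could happen for \emph{every} $n$, in which case $x$ would not belong to your set $\mathcal{E}_h$ at all (or, if you allow lines through $0$ in the union, the dimension bound on $\mathcal{E}_h$ would collapse). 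The paper closes exactly this hole with a dedicated lemma: for every $n_1 \geq n_0(x)$ there exists $n \geq n_1$ with $J_{n,h}(x) \in \mathcal{J}$. Its proof is by contradiction: if $J_{n,h}(x)\ni 0$ for all $n \geq n_1$, the expansion properties \eqref{3.2}--\eqref{3.3} of the matrices on $\Theta$ force $\left\|A_{n-n_1}(T^{n_1}(x))^{-1}(A_n(x)\cdot th - c_n(x))\right\|\to 0$; splitting this vector as $\left(A_{n_1}(x)\cdot th - c_{n_1}(x)\right) + (\text{integer vector})$ shows both summands vanish, whence $th = (A_{n_1}(x))^{-1}\cdot c_{n_1}(x) \in \Z^d$ by integrality of the cocycle, contradicting $th \notin \Z^d$. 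Note that this lemma is the only point of the entire proof where the hypothesis $th \notin \Z^d$ is actually used; your proposal states that hypothesis but never invokes it, which is the symptom of the missing step.
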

\end{proof}

\subsection{Proof of Theorem \ref{theoprincipal}}

Using the previous results, we give here the proof of Theorem \ref{theoprincipal}. Let $\mathfrak{R} \subset \mathfrak{S}_d^0$ and take $\pi\in\mathfrak{R}$ an irreducible permutation which is not a rotation. Assume $[\lambda] \in \mathbb{P}_+^{d-1}$ is such that $f=f([\lambda],\pi)$ is not weakly mixing. If $(1,\dots,1) \not\in H(\pi)$, then Theorem \ref{thm111bis} gives the result. Let us then consider the case where $g > 1$ and $(1,\dots,1) \in H(\pi)$. As we have said, the fact that $f$ is not weakly mixing means that either it is not ergodic or that for some $t \in \R \backslash \Z$, there is a nonzero measurable solution $\phi\colon I \to \C$ to 
\begin{equation}\label{eqfff}
\phi \circ f=e^{2 \pi \mathrm{i} t} \phi.
\end{equation} 
By Theorem \ref{theouniq}, we know that $f([\lambda],\pi)$ is uniquely ergodic, hence ergodic, but for a set of $[\lambda]$ with non-full Hausdorff dimension. It thus remain to deal with Equation (\ref{eqfff}) for $t \not\in \Z$. 

Assume (\ref{eqfff}) holds for some nonzero $\phi$ and $t \not\in \Z$. Let $U_\mathfrak{R}$ be the open set given by Veech criterion (Theorem \ref{veechcriterion}), that we apply here with this choice of $t$ and $h=(1,\dots,1) \in H(\pi)$. We take a connected component $\Delta \times \{\pi\} \subset U_\mathfrak{R}$ of the domain of $\mathcal{R}_R^n$, $n \gg 1$, such that $\Delta \Subset \mathbb{P}_+^{d-1}$, and we consider the cocycle $(T,A)$ where $T\colon \Delta\to \Delta$ is the map induced by Rauzy renormalization, and $A$ is induced by the Rauzy cocycle $\restriction{B^R}{H(\pi)}$. We know from Theorem \ref{escapingpoints} that we can restrict ourselves to the set $\Delta^\infty$ of $[\lambda]$ to which we can apply $T$ infinitely many times since the set of escaping points does not have full Hausdorff dimension. By Veech criterion, (\ref{eqfff}) implies that
\begin{equation*}
\lim_{n \to \infty}
\|A_n([\lambda]) \cdot t h\|_{\R^d/\Z^d}=0,
\end{equation*}
hence $[\lambda] \in \mathcal{E}_{t,h}$ with our previous notations. We deduce that $[\lambda] \in \mathcal{E}_h=\mathcal{E}_{(1,\dots,1)}$, but by Theorem \ref{maintheo}, the latter does not have full Hausdorff dimension, which concludes. 

\subsection{Weak mixing for flows}

\subsubsection{Special flows}

Any translation flow on a translation surface can be regarded, by considering its return map to a transverse interval, as a {\it special flow} (suspension flow) over some interval exchange
transformation with a roof function constant on each subinterval.

Let $F=F(h,\lambda,\pi)$ be the special flow over the i.e.t.\  $f=f(\lambda,\pi)$ with roof function
specified by the vector $h\in \R^d_+$, that is, the roof function is constant equal to $h_{i}$ on the
subinterval $I_{i}= I_{i}^\lambda$, for all $i\in \{1,\dots,d \}$. It is possible to see that if $F$ is a translation flow then necessarily $h\in H(\pi)$. As for interval exchange transformations, we can projectivize the length data $\lambda$.

As explained earlier, the phase space of $F$  is the union of disjoint rectangles $D:=\bigcup\limits_{i} I_i \times [0,h_i)$, and the flow $F$
is completely determined by the conditions $F_s(x,0)=(x,s)$, $x \in I_i$, $s<h_i$, $F_{h_i}(x,0) =(f(x),0)$,
for all $i\in \{1,\dots,d\}$. Weak mixing for the flow $F$ is equivalent to the non-existence of non-constant measurable solutions $\phi\colon D \to \C$ to
\begin{equation*}
\phi(F_s(x))=e^{2 \pi \mathrm{i} t s} \phi(x),
\end{equation*}
for any $t \in \R$, or,  in terms of the i.e.t.\ $f$,
\begin{enumerate}
\item $f$ is ergodic;
\item  for any $t \neq 0$ there are no nonzero measurable solutions $\phi\colon I \to \C$ to Equation (\ref {phih}).
\end{enumerate}

\begin{thm}\label{theeeo}

Let $\pi \in \mathfrak{S}_d^0$ with $g>1$.
For every $h \in (H(\pi) \cap \R^d_+)$ and every $[\lambda] \in \mathbb{P}_+^{d-1}$ except for a subset of Hausdorff dimension strictly less than $d-1$,
the special flow $F(h,\lambda,\pi)$ is weakly mixing.

\end{thm}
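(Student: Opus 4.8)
The plan is to run, for a fixed roof vector $h\in H(\pi)\cap\R_+^d$, the exact flow analogue of the argument proving Theorem \ref{theoprincipal}. I first recall that $F(h,\lambda,\pi)$ is weakly mixing precisely when $f=f(\lambda,\pi)$ is ergodic and, for every $t\neq 0$, the twisted equation
\begin{equation*}
\phi(f(x))=e^{2\pi\mathrm{i}\,t h_j}\phi(x),\qquad x\in I_j^\lambda,
\end{equation*}
admits no nonzero measurable solution $\phi\colon I\to\C$. Thus the set of $[\lambda]$ for which $F$ fails to be weakly mixing is contained in $N\cup S$, where $N$ collects the parameters with $f$ non-ergodic and $S$ those for which the displayed equation has a nonzero solution for some $t\neq 0$. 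Since a finite union of sets of Hausdorff dimension below $d-1$ again has dimension below $d-1$, it suffices to treat $N$ and $S$ separately.

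The set $N$ is immediate: unique ergodicity implies ergodicity, so $N$ is contained in the set of non-uniquely-ergodic parameters, which has Hausdorff dimension strictly less than $d-1$ by Theorem \ref{theouniq}. For $S$ I would proceed as in Subsection \ref{genresup1}. Fix $t\neq 0$ and a nonzero solution $\phi$, and apply Veech's criterion (Theorem \ref{veechcriterion}) to the vector $h\in H(\pi)$: after choosing a connected component $\Delta\times\{\pi\}\subset U_\mathfrak{R}$ of the domain of a high iterate $\mathcal{R}_R^n$ with $\Delta\Subset\mathbb{P}_+^{d-1}$ (cut out by a positive matrix), and discarding the escaping set $\Delta\setminus\Delta^\infty$, which has dimension strictly below $d-1$ by Proposition \ref{escapingpoints}, the criterion forces
\begin{equation*}
\lim_{n\to\infty}\bigl\|A_n([\lambda])\cdot t h\bigr\|_{\R^d/\Z^d}=0,
\end{equation*}
where $(T,A)$ is the induced locally constant, integral, fast decaying cocycle on $\Delta$ furnished by Lemmas \ref{boundddd} and \ref{fastdddd}. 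In other words $th\in W^s([\lambda])$.

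It then remains to distinguish two cases according to $th$. If $th\notin\Z^d$ then $[\lambda]\in\mathcal{E}_{t,h}$, and since the set $\mathcal{E}_h$ of Subsection \ref{proofoftheomaintheo} depends only on the line $J_h$ and satisfies $\bigcup\limits_{t\in\R,\ th\not\in\Z^d}\mathcal{E}_{t,h}\subset\mathcal{E}_h$, Theorem \ref{maintheo} (the hypotheses $g>1$ and $h\in H(\pi)$ of Theorem \ref{thmA} being in force) yields $\mathrm{HD}(\mathcal{E}_h)<d-1$. If instead $th\in\Z^d$ with $t\neq 0$, the twisted equation degenerates to $\phi\circ f=\phi$, so that the constant functions already furnish a nonzero solution; this is the rotational eigenvalue carried by a roof whose coordinates are pairwise commensurable, and it obstructs weak mixing for every recurrent $[\lambda]$. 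Such a $t\neq 0$ exists only on the commensurability locus of $h$, so outside it the only relevant values satisfy $th\notin\Z^d$ and this degenerate subcase does not occur. Granting this, $S$ is covered by $\mathcal{E}_h$ up to the escaping set and hence has dimension strictly below $d-1$, whence so does $N\cup S$, which is the claim.

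I expect the genuine difficulty to be entirely absorbed into Theorem \ref{maintheo}/\ref{thmA}, which I invoke as a black box: it is there that the assumption $g>1$ enters through the two positive Lyapunov exponents of the restricted Rauzy cocycle, driving the large-deviation and fast-decay estimates of Subsection \ref{proofoftheomaintheo}. Once that input is available, the only care needed in the present statement is the reduction of flow weak mixing to the twisted interval-exchange equation and the clean separation of the non-integral eigenvalue case $th\notin\Z^d$ (where the whole machinery applies verbatim with general $h$ in place of $(1,\dots,1)$) from the rotational case $th\in\Z^d$.
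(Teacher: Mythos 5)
Your proposal takes the same route as the paper: the paper's entire proof of Theorem \ref{theeeo} is the single sentence that it is ``an immediate consequence of Veech criterion and of Theorem \ref{thmA}'', and your argument is precisely that deduction written out --- non-ergodicity absorbed by Theorem \ref{theouniq} (with Proposition \ref{escapingpoints} for escaping points), and eigenvalues handled by Theorem \ref{veechcriterion} followed by the inclusion $\bigcup_{t,\ th\notin\Z^d}\mathcal{E}_{t,h}\subset\mathcal{E}_h$ and Theorem \ref{maintheo}, the key structural point (which you state correctly) being that $\mathcal{E}_h$ depends only on the line $\R h$, so one Hausdorff-dimension bound absorbs all eigenvalues $t$ at once.

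Where you go beyond the paper is the degenerate case $th\in\Z^d$, $t\neq 0$, and there your discussion is in fact more careful than the paper's, but your phrase ``granting this'' conceals an essential hypothesis rather than a harmless reduction. The case is nonvacuous: $H(\pi)$ is a rational subspace of $\R^d$ (the annulator of the integer vectors $b^s$), so the open cone $H(\pi)\cap\R_+^d$ contains vectors proportional to integer vectors; for such $h$ the constant functions solve Equation (\ref{phih}) for a suitable $t\neq 0$, the flow has eigenvalue $t$ for \emph{every} $[\lambda]$, and the conclusion fails on a set of full dimension $d-1$. So the statement is simply false on the commensurability locus, and ``for every $h\in H(\pi)\cap\R_+^d$'' must be read as excluding the countably many rational rays (a dimension-one subset of $H(\pi)$); your proof is complete exactly under that added hypothesis, which is also the restriction the paper makes silently, since Theorem \ref{thmA} only treats those $t$ with $th\notin\Z^d$. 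Two small points to tighten: (i) Veech's criterion as stated requires a \emph{non-constant} solution, so it should be invoked only after the case split --- when $th\notin\Z^d$ any nonzero solution of (\ref{phih}) is automatically non-constant, since a constant one forces $e^{2\pi\mathrm{i}th_j}=1$ for all $j$, whereas when $th\in\Z^d$ the conclusion $th\in W^s([\lambda])$ holds trivially and gives no information; (ii) the omission is harmless for the product statement (Theorem \ref{wmixt}), because there the commensurable rays contribute a set of dimension at most $1+(d-1)=d<2g+d-1$.
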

\noindent The proof is an immediate consequence of Veech criterion and of Theorem \ref{thmA}.

\subsubsection{Weak mixing for translation flows}\label{sect flows trans}

Let $\mathfrak{R} \subset \mathfrak{S}_d^0$ be a Rauzy class with $d > 1$, and let $\pi \in \mathfrak{R}$. We have seen that the associate space of translation flows is parametrized by $(h,[\lambda]) \in H(\pi) \times \mathbb{P}_+^{d-1}$, where $H(\pi)$ has dimension $2g$. As for i.e.t.'s, we want to prove that the set of $(h,[\lambda]) \in H(\pi) \times \mathbb{P}_+^{d-1}$ such that the corresponding flow is not weakly mixing has Hausdorff dimension strictly less than $2g + d -1$. If we look at Theorem \ref{theeeo}, the point is that the Hausdorff dimension of a product is not necessarily less than the sum of the Hausdorff dimension of the two factors.\footnote{For instance, it is possible to find $X$ and $Y$, $\mathrm{HD}(X)=\mathrm{HD}(Y)=0$, but $\mathrm{HD}(X \times Y)=1$.} Considering the return map to a transverse interval, and by Veech criterion, it is enough to show the following.

\begin{theo}
Let $U \subset \mathbb{P}^{d-1}_+\times \mathfrak{R}$ be any open set. The set of $(h,[\lambda]) \in (H(\pi)\backslash \mathbb{Z}^d) \times \mathbb{P}_+^{d-1}$ satisfying
\begin{equation*}
\limsup_{\ntop {n \to \infty} {\mathcal{R}_R^n([\lambda],\pi) \in U}}
\|B_n^R([\lambda],\pi) \cdot h\|_{\R^d/\Z^d}=0
\end{equation*}
has Hausdorff dimension strictly less than $2g+d-1$.
\end{theo}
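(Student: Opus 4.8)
The plan is to reduce, exactly as in the proof of Theorem \ref{thmA}, to the fast-decaying cocycle $(T,A)$ on a simplex $\Delta\Subset\mathbb{P}_+^{d-1}$: I choose a connected component $\Delta\times\{\pi\}\subset U$ of the domain of some $\mathcal{R}_R^n$ with $\Delta\Subset\mathbb{P}_+^{d-1}$, discard the escaping set $\Delta\setminus\Delta^\infty$ (of non-full dimension by Proposition \ref{escapingpoints}), and use expansiveness of Rauzy renormalization to transfer the final bound back to all of $\mathbb{P}_+^{d-1}$. The condition in the statement then reads $\|A_m([\lambda])\cdot h\|_{\mathbb{R}^d/\mathbb{Z}^d}\to 0$, so I must bound the Hausdorff dimension of $\mathcal{B}:=\{(h,[\lambda])\in(H(\pi)\setminus\mathbb{Z}^d)\times\Delta^\infty:\ \|A_m([\lambda])\cdot h\|_{\mathbb{R}^d/\mathbb{Z}^d}\to 0\}$. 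Since Hausdorff dimension is stable under countable unions, it is enough to work with $h$ ranging in a fixed bounded box $Q\subset H(\pi)$ bounded away from the origin, and then to cover $H(\pi)\setminus\{0\}$ by countably many such boxes.

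First I would recall the cover in the $[\lambda]$-direction. For a fixed $h$, the set $\Gamma^m_\delta(J_h)=\{[\lambda]:\ J_h\cap W^s_{\delta,m}([\lambda])\neq\emptyset\}$ attached to the line $J_h=\mathbb{R}h$ is a union of depth-$m$ cylinders $\Delta^{\underline{l}}$, on each of which $A_m$ equals the locally constant value $A^{\underline{l}}$; the large deviations estimate established for interval exchange transformations gives $\mu(\Gamma^m_\delta(J_h))\le Ce^{-\kappa m}\|J_h\|^{-\rho}$, and since $h\in Q$ the factor $\|J_h\|^{-\rho}$ is bounded by a uniform constant. This is the cover I will use, and the whole difficulty is to make it serve simultaneously for all nearby $h$.

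To this end I fix $C'>\lambda_1$, where $\lambda_1$ denotes the top Lyapunov exponent of the Rauzy cocycle on $H(\pi)$, and partition $Q$ into $\approx\epsilon_m^{-2g}$ boxes of side $\epsilon_m:=\tfrac{\delta}{2}e^{-C'm}$. Let $G_m:=\{[\lambda]:\ \max_{k\le m}\|A_k([\lambda])\|<e^{C'm}\}$, which is a union of depth-$m$ cylinders. For a box with center $h$, any $h'$ in it, and any $[\lambda]\in G_m$, one has $\|A_k([\lambda])\cdot(h'-h)\|\le e^{C'm}\epsilon_m=\tfrac{\delta}{2}$ for all $k\le m$; hence $\|A_k([\lambda])\cdot h'\|_{\mathbb{R}^d/\mathbb{Z}^d}<\delta$ forces $\|A_k([\lambda])\cdot h\|_{\mathbb{R}^d/\mathbb{Z}^d}<\tfrac{3\delta}{2}$, so that $[\lambda]$ already lies in the cylinder cover of $\Gamma^m_{3\delta/2}(J_h)$. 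On the complement $\Delta\setminus G_m$ the cocycle grows faster than its top exponent permits, and a large deviations bound (available because $(T,A)$ is fast decaying and $C'>\lambda_1$) yields $\mu(\Delta\setminus G_m)\le Ce^{-cm}$. Thus, uniformly in $h'$ inside the box, the parameters bad for $h'$ are covered by depth-$m$ cylinders of total measure at most $C''e^{-\kappa''m}$ with $\kappa'':=\min(\kappa,c)>0$; in other words the cover is locally constant in $h$.

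It remains to assemble the product cover and count, organizing the cylinder-size bookkeeping as in the proof of Theorem \ref{thm:HD}. Using bounded distortion and the comparability of $\mu$ with Lebesgue measure, the cylinders of each $h$-box's $\lambda$-cover are subdivided into $\approx e^{-\kappa''m}\epsilon_m^{-(d-1)}$ boxes of side $\epsilon_m$; together with the $\approx\epsilon_m^{-2g}$ tiles of $Q$ this produces $\approx\epsilon_m^{-2g}\,e^{-\kappa''m}\,\epsilon_m^{-(d-1)}$ product boxes of diameter $\approx\epsilon_m$. For $s=2g+d-1-\eta$ the resulting sum satisfies $\sum(\operatorname{diam})^s\approx e^{(C'\eta-\kappa'')m}$, which tends to $0$ as soon as $0<\eta<\kappa''/C'$; hence $\mathrm{HD}(\mathcal{B})\le 2g+d-1-\tfrac{\kappa''}{2C'}<2g+d-1$. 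The main obstacle is precisely this balancing act, flagged by the footnote on products of sets: the variable $h$ injects $2g$ genuine dimensions into the cover, and the estimate survives only because the scale $\epsilon_m=e^{-C'm}$ is chosen with $C'$ finite, just above $\lambda_1$, so that the exponential measure gain $e^{-\kappa''m}$ in the $\lambda$-direction is not overwhelmed. Controlling the exceptional fast-growth set $\Delta\setminus G_m$ by large deviations is exactly what makes the cover genuinely locally constant in $h$ and keeps the dimension strictly below $2g+d-1$.
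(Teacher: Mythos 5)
Your handling of the $h$-direction (the Lyapunov large-deviations bound making the $\lambda$-cover locally constant in $h$, then the product-cover counting with the $2g$ extra dimensions) is essentially the paper's argument, which uses Proposition \ref{prop anom lyap behh} and the sets $X_m$ in the same role as your $G_m$. But the reduction in the $[\lambda]$-direction has a genuine gap: you apply Proposition \ref{mainresultprob} to the line $J_h=\mathbb{R}h$, which passes through the origin. That proposition only concerns lines $J\in\mathcal{J}$, i.e.\ lines \emph{not} passing through $0$, and for $J_h=\mathrm{Span}(h)$ one has $\|J_h\|=\dist(J_h,0)=0$ — this has nothing to do with $h$ itself being bounded away from the origin or from $\Z^d$ — so the prefactor $\|J_h\|^{-\rho}$ is infinite. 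Worse, the claimed estimate is actually false for such a line: since $0\in J_h$ and $0\in W^s_{\delta,mN}(x)$ for every $x$ (the zero vector trivially satisfies $\|A_k(x)\cdot 0\|_{\R^d/\Z^d}=0<\delta$), one has $\Gamma_\delta^m(J_h)=\Delta$ for all $m$, a set of full measure rather than exponentially small measure. So the fixed-$h$ cover that your whole construction rests on does not exist as stated.

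The missing idea is the one the paper imports from the proof of Theorem \ref{maintheo}. The condition $\|A_n([\lambda])\cdot h\|_{\R^d/\Z^d}\to 0$ only produces, for $n$ past a waiting time $n_0([\lambda])$, integer vectors $c_n\in\Z^d$ with $A_n([\lambda])\cdot h-c_n\in W^s_\delta(T^n([\lambda]))$, and the lines to which the children process applies are the translates $A_n([\lambda])\cdot J_h-c_n$. One must (i) invoke the lemma of Subsection \ref{proofoftheomaintheo} (this is where $h\notin\Z^d$ enters) to find times $n$ at which such a translated line avoids the origin, hence lies in the countable family $\mathcal{J}_h=\{M\cdot J_h-c\ \vert\ (M,c)\in\mathrm{SL}(d,\Z)\times\Z^d\}\cap\mathcal{J}$; and (ii) decompose the bad set as $\bigcup_{n\ge 0,\,J\in\mathcal{J}_h}\bigcap_{m\ge 0}T^{-n}\left(\Gamma_{\delta/2}^m(J)\right)$, applying Proposition \ref{mainresultprob} to each $J\in\mathcal{J}_h$ separately (now $\|J\|>0$, and the factor $\|J\|^{-\rho}$ is harmless since it does not affect the exponential rate $\kappa$), recovering the final bound by countable stability of Hausdorff dimension over $(n,M,c)$. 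Note also that your covering criterion ``$\|A_k([\lambda])\cdot h'\|_{\R^d/\Z^d}<\delta$ for all $k\le m$'' is not implied by membership in your set $\mathcal{B}$: the weak-stable condition controls iterates only after the waiting time, which is the second reason the shift by $T^{-n}$ is indispensable. Once the argument is organized per fixed $(n,M,c)$ in this way, your local-constancy and counting steps do go through — this is exactly the structure of the paper's proof — but without the $\mathcal{J}_h$-decomposition the starting estimate is vacuous.
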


Again we restrict ourselves to a simplex $\Delta\Subset \mathbb{P}_+^{d-1}$, and we define a cocycle $(T,A)$, where $T\colon \Delta \to \Delta$ is the map induced by Rauzy renormalization. We let 
\begin{equation*}
\mathcal{F}:=\{(h,[\lambda]) \in H(\pi) \times \Delta,\ h \in W^s([\lambda])\backslash \mathbb{Z}^d\}.
\end{equation*}

\noindent It is then sufficient to show the following result, which is proved in Subsection \ref{proofweakmixtransl}. 
\begin{theo}\label{wmixt}
We have $\mathrm{HD}(\mathcal{F})< 2g+d-1$.
\end{theo}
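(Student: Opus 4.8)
The plan is to treat $\mathcal{F}$ as a set which is ``almost a product'' of the $2g$-dimensional space $H(\pi)$ with the $(d-1)$-dimensional simplex $\Delta$, and to show that the fibrewise dimension drop already established for interval exchange transformations (Theorem \ref{maintheo}) survives the extra $2g$ directions with the \emph{same} loss $\min(\delta,\alpha_1)$. Since Hausdorff dimension is countably stable, I would first cover $H(\pi)$ by countably many bounded regions $R$ (say dyadic annuli $\{\|h\|\sim N\}$) and bound $\mathrm{HD}(\mathcal{F}\cap(R\times\Delta))$ for each fixed bounded $R$; all estimates below are meant to be uniform over such an $R$, which is legitimate because in $\mu(\Gamma_\delta^m(J_h))\le Ce^{-\kappa m}\|J_h\|^{-\rho}$ the factor $\|J_h\|$ stays bounded away from $0$ and $\infty$ on $R$. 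The cocycle $(T,A)$ obtained by inducing the Rauzy cocycle on $\Delta$ and restricting to $H(\pi)$ is, by Lemmas \ref{boundddd} and \ref{fastdddd}, locally constant, integral, bounded distortion and fast decaying, and it has $g>1$ positive Lyapunov exponents. The engine behind Theorem \ref{maintheo} is precisely the large deviation bound above together with Theorem \ref{thm:HD}: for a \emph{fixed} direction the level-$m$ bad cylinders $\Delta^{\underline{l}}$ have total measure $\le Ce^{-\kappa m}$, and feeding this into Theorem \ref{thm:HD} gives $\mathrm{HD}(\mathcal{E}_h)\le (d-1)-\min(\delta,\alpha_1)$.

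For the product I would cover, at each level $m$, the set $\mathcal{F}\cap(R\times\Delta)$ by pieces $B\times\Delta^{\underline{l}}$, where $\Delta^{\underline{l}}$ runs over the level-$m$ cylinders (on which the partial products $A_k^{\underline{l}}:=\restriction{A_k}{\Delta^{\underline{l}}}$, $0\le k\le m$, are constant) and $B$ is a ball in $H(\pi)$. The decisive point is that membership of $(h,[\lambda])$ in the level-$m$ bad set — that $\|A_k^{\underline{l}}\cdot h\|_{\R^d/\Z^d}$ stays small for $0\le k\le m$ — depends on $h$ only through these finitely many linear maps, hence is \emph{locally constant in $h$} at the scale $r_{\underline{l}}:=(\max_{0\le k\le m}\norm{\restriction{A_k^{\underline{l}}}{H(\pi)}})^{-1}$: two directions $h,h'$ with $\|h-h'\|\le r_{\underline{l}}$ satisfy $\|A_k^{\underline{l}}(h-h')\|\le 1$ for all $k\le m$. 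Thus the set $H^{\underline{l}}_{\mathrm{bad}}\subset R$ of bad $h$ is a union of balls of radius $r_{\underline{l}}$, and I would refine the cover to cubes of side $d_{\underline{l}}:=\mathrm{diam}(\Delta^{\underline{l}})$. Provided $d_{\underline{l}}\le r_{\underline{l}}$, covering $H^{\underline{l}}_{\mathrm{bad}}$ at scale $d_{\underline{l}}$ costs $\asymp \mathrm{Leb}_{2g}(H^{\underline{l}}_{\mathrm{bad}})\,d_{\underline{l}}^{-2g}$ cubes (this uses only local constancy, not the geometry of $H^{\underline{l}}_{\mathrm{bad}}$), each of diameter $\asymp d_{\underline{l}}$.

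Summing $(\mathrm{diam})^s$ over this cover then gives, with $s':=s-2g$,
\[
\sum_{\underline{l}\ \mathrm{bad}} \mathrm{Leb}_{2g}(H^{\underline{l}}_{\mathrm{bad}})\,d_{\underline{l}}^{\,s'}
=\int_{R}\Bigg(\sum_{\underline{l}:\ h\ \mathrm{bad}} d_{\underline{l}}^{\,s'}\Bigg)\,dh
\]
by Fubini, since $\mathrm{Leb}_{2g}(H^{\underline{l}}_{\mathrm{bad}})=\int_R \mathbf{1}[h\in H^{\underline{l}}_{\mathrm{bad}}]\,dh$. For each fixed $h$ the inner sum is exactly the $s'$-dimensional cover-sum of the level-$m$ bad cylinders for the single direction $h$, which tends to $0$ as $m\to\infty$ for every $s'>(d-1)-\min(\delta,\alpha_1)$ by the proof of Theorem \ref{maintheo} (through Theorem \ref{thm:HD}), uniformly over $R$. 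Hence the whole $s$-sum tends to $0$ for every $s>2g+(d-1)-\min(\delta,\alpha_1)$, yielding $\mathrm{HD}(\mathcal{F})\le 2g+d-1-\min(\delta,\alpha_1)<2g+d-1$.

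The main obstacle is the scale-matching hypothesis $d_{\underline{l}}\le r_{\underline{l}}$: it may fail on cylinders along which the cocycle norm is anomalously large compared with the geometric size of $\Delta^{\underline{l}}$, and there the $h$-cover is inefficient (each bad ball is smaller than $d_{\underline{l}}$, so one cannot descend to cubes of side $d_{\underline{l}}$ without loss). I would isolate these into an exceptional fast-growth set $\{[\lambda]:\max_{k\le m}\|A_k([\lambda])\|>e^{\beta m}\}$ and estimate its product with $R$ separately: a large deviation bound for the top exponent, available from fast decay as in \cite{AD}, shows this set has exponentially small measure once $\beta$ exceeds the top Lyapunov exponent, so its contribution to $\mathcal{F}$ again has dimension strictly below $2g+d-1$. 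Checking that all the large deviation constants can be taken uniform over $R$, and verifying the elementary geometric comparison between $\|B_\gamma^*\|$, $\mathrm{diam}(\Delta^{\underline{l}})$ and $\max_{k\le m}\norm{\restriction{A_k^{\underline{l}}}{H(\pi)}}$ needed for $d_{\underline{l}}\le r_{\underline{l}}$ off the exceptional set, are the remaining technical points.
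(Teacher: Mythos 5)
Your overall architecture --- local constancy of badness in $h$ at a scale dictated by the cocycle norm, an exceptional fast-growth set controlled by large deviations for the top exponent, and product covers fed into the machinery of \cite{AD} --- is the same as the paper's, but the scale calibration at the heart of your cover is wrong, and it fails on a set of \emph{full} measure rather than on the exceptional set you propose to excise. You cover the $h$-direction by cubes of side $d_{\underline{l}}=\mathrm{diam}(\Delta^{\underline{l}})$ and need $d_{\underline{l}}\leq r_{\underline{l}}\asymp\big(\max_{k\leq m}\|A_k^{\underline{l}}\|\big)^{-1}$. Denote by $\theta_1\geq\theta_2$ the two largest Lyapunov exponents of the induced cocycle; since $g>1$ we have $\theta_2>0$. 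For a typical level-$m$ cylinder, Oseledets gives $\|A_m^{\underline{l}}\|\approx e^{\theta_1 m}$, so $r_{\underline{l}}\approx e^{-\theta_1 m}$, whereas the diameter of the cylinder is governed by the \emph{projective} contraction of the matrix, i.e.\ by the ratio of its first two singular values: $d_{\underline{l}}\approx e^{-(\theta_1-\theta_2)m}$. Hence $d_{\underline{l}}/r_{\underline{l}}\approx e^{\theta_2 m}\to\infty$ almost surely: the very positivity of the second exponent that drives the whole weak-mixing argument is what destroys your scale-matching hypothesis. This is typical behavior, not an anomaly, so no large-deviation bound on the top exponent can sweep it into a small exceptional set. (A separate, minor point: for local constancy you need $\|A_k^{\underline{l}}\cdot(h-h')\|\lesssim\delta$, not $\leq 1$, together with the $\delta/2$-versus-$\delta$ trick; that is fixable by a constant adjustment of $r_{\underline{l}}$, unlike the issue above.)

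There is a second gap in the Fubini step. Even granting scale matching, your inner sum $\sum_{\underline{l}:\,h\ \mathrm{bad}}d_{\underline{l}}^{\,s'}$ amounts to covering the level-$m$ bad set for a fixed direction $h$ by the cylinders themselves, one ball of diameter $d_{\underline{l}}$ per cylinder. That is \emph{not} what Theorem \ref{thm:HD} or its proof provides: the covers constructed in \cite{AD} consist of balls at the scale of the \emph{smallest width} of each cylinder, precisely because level-$m$ cylinders can be extremely elongated (tiny measure but moderate diameter), and for such slivers the single-ball sum $\sum d_{\underline{l}}^{\,s'}$ is not controlled by $\mu(\Gamma_\delta^m(J))\leq Ce^{-\kappa m}\|J\|^{-\rho}$ and can blow up as $m\to\infty$. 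The paper's proof of Theorem \ref{wmixt} resolves both points at once: it extracts from the proof of Theorem \ref{thm:HD} a cover of $T^{-n}\left(\Gamma_{\delta}^m(J)\cup X_m\right)$ by balls $B_k$ of diameter at most $e^{-C'm}$ with $\sum_k\mathrm{diam}(B_k)^{d-1-\delta'}\leq K$, shows that this \emph{same} cover serves all $h'$ within distance $e^{-C''m}$ of the reference direction (this is where $X_m$ and the $\delta/2$ trick enter), and then covers each $h$-ball of radius $e^{-C''m}$ by $O\left(\left(e^{-C''m}/r_k\right)^{2g}\right)$ balls of diameter $r_k=\mathrm{diam}(B_k)$, so that the $2g$-dimensional cost factors out against $\sum_p(\delta_{m,p}^j)^{2g}\leq K'$ while the $[\lambda]$-factor contributes $\sum_k r_k^{d-1-\delta'}\leq K$. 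In short, the $h$-scale must be matched to the fine radii of the $[\lambda]$-cover (both of size $e^{-Cm}$), not to the cylinder diameters. If you recalibrate your Fubini argument accordingly --- cover \emph{both} factors at scale $\approx e^{-L'm}$ off the exceptional set $X_m$ --- it does go through and gives $\mathrm{HD}(\mathcal{F})\leq 2g+d-1-c$ with $c$ of order $\kappa/L'$, which suffices for the theorem, though with a weaker loss than the paper's $\delta'$.
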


\section{Study of the weak-stable lamination}\label{sectionweakstable}

\subsection{The probabilistic argument of Avila and Forni \cite{AF1}}\label{subsproba}

As in \cite{AF1}, we consider here an abstract locally constant integral cocycle $(T,A)$. We also assume that $T$ has bounded distortion\footnote{with respect to a measure $\mu$.}, and that $(T,A)$ is fast decaying; as we have already seen, this implies that $(T,A)$ is log-integrable. Let $\Theta \subset \mathbb{P}_{+}^{d-1}$ be a compact set \textit{adapted} to $(T,A)$ (see (\ref{3.2}) and (\ref{3.3}) for the definition). We define $\mathcal{J}=\mathcal{J}(\Theta)$ to be the set of lines in $\R^d$, parallel to some element of $\Theta$ and not passing through $0$.

Recall the definition of the \textit{weak-stable lamination} associated to the cocycle $(T,A)$ at some point $x \in \Delta\text{:}$
\begin{equation*}
W^s(x):=\{w \in \mathbb{R}^d,\ \|A_n(x) \cdot w \|_{\R^d/\Z^d} \to 0\},
\end{equation*}
where $\|\cdot\|_{\R^d/\Z^d}$ is the euclidean distance from $\Z^d$.\comm{For almost every $x \in \Delta$, the space $W^s(x)$ is a union of translates of $E^s(x)$. In the case where the cocycle is bounded, namely if $A : \Delta \to \mathrm{GL}(d,\R)$ is essentially bounded, we can see that $W^s(x)=\bigcup\limits_{c \in \Z^d} E^s(x) + c$. However, most of the time $W^s(x)$ appears to have a more complicated structure ; in particular, it may be the union of uncountably many translates of $E^s(x)$.
Veech criterion tells us that the problem of weak mixing can essentially be reduced to proving some exclusion result for the weak-stable lamination of the cocycle introduced in Section \ref{mainresultsection}. More precisely, given $(t,h) \in \R\times H(\pi)$ such that $th \not \in \Z^d$, we want the set of parameters $[\lambda]$ for which $th \in W^s([\lambda])$ to be ``small" (in terms of Hausdorff dimension). This is the reason why it is crucial for our study to have a better understanding of the weak-stable space.\\} For $0 < \delta < 1/10$ and $n \geq 0$, we define
\begin{equation*}
W_{\delta,n}^s(x):=\{w \in B_\delta(0),\ \forall k \leq n,\ \|A_k(x) \cdot w\|_{\R^d/\Z^d} < \delta\}
\end{equation*}
and we let
\begin{equation*}
W_\delta^s(x):=\bigcap\limits_{n \geq 0} W_{\delta,n}^s(x).
\end{equation*}
For every such $\delta$ and for all $w \in W^s(x)$, we see that there exists $n_0 \geq 0$ such that for every $n \geq n_0$, there exists $c_n(w) \in \Z^d$ with $A_n(x) \cdot w -c_n(w) \in W_\delta^s(T^{n}(x))$.\\

Recall that $\Omega$ denotes the set of all finite words with integer entries.
For any $0<\delta < 1/10$ and $\underline{l} \in \Omega$, let $\phi_\delta(\underline{l},J)$ be the number of connected components of the set $A^{\underline{l}}(J \cap B_\delta(0)) \cap B_\delta(\Z^d\backslash\{0\})$. If $J \in \mathcal{J}$, we let $\|J\|$ denote the distance between $J$ and $0$. Given $J$ with $\|J\| < \delta$ and $\underline{l} \in \Omega$, let $J_{\underline{l},1},\dots,J_{\underline{l},\phi_\delta(\underline{l},J)}$ be all the lines of the form $A^{\underline{l}} \cdot J - c$ where $A^{\underline{l}}\cdot(J \cap B_\delta (0)) \cap B_\delta(c) \neq \emptyset$ with $c \in \Z^d\backslash\{0\}$: such lines are called \textit{non-trivial children} of $J$. We also define $J_{\underline{l},0}:=A^{\underline{l}} \cdot J$. If $\|A^{\underline{l}} \cdot J\|< \delta$, then $J_{\underline{l},0}$ is called a \textit{trivial child} of $J$.  

\begin{rem}\label{remnontrivchildren}
We define $\phi_\delta(\underline{l}):=\sup\limits_{J\in \mathcal{J}}\phi_\delta(\underline{l},J)$. It is clear that by making $\delta \to 0$, non-trivial children become rarer, which means that for any (fixed) $\underline{l} \in \Omega$, the function $\delta \mapsto \phi_\delta(\underline{l})$ is non-decreasing. We see that there exists some $\delta_{\underline{l}} > 0$ such that for $\delta < \delta_{\underline{l}}$, we have
$\phi_\delta(\underline{l})=0$.
\end{rem}

\begin{figure}[H]
\begin{center}
    \includegraphics [width=8cm]{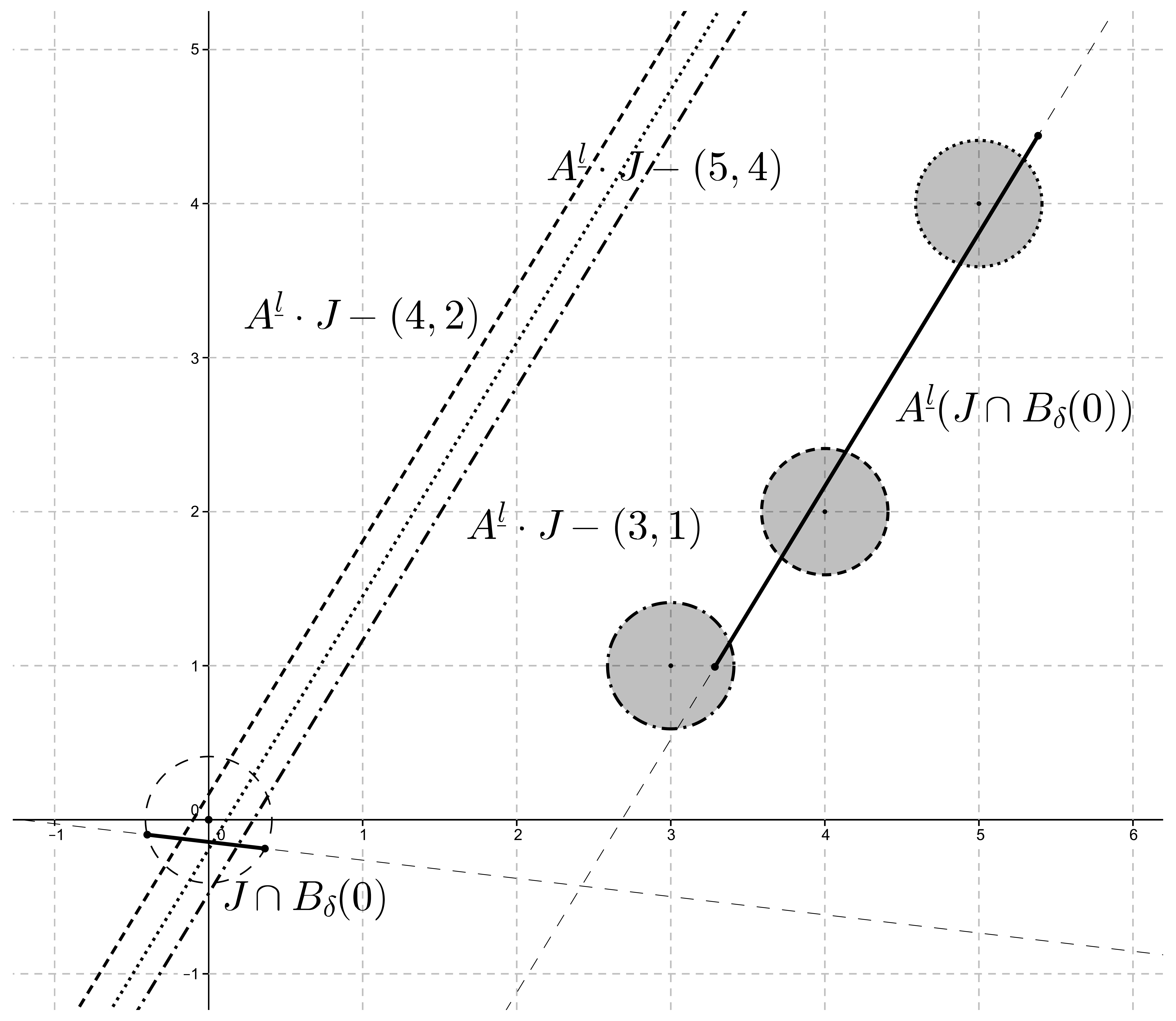}
    \caption{\label{Child}A line and its first-generation children}
\end{center}
\end{figure}

Fix $N \in \N\backslash\{0\}$, and let $\Omega^{N}$ be the set of all words of length $N$.
The set $\Delta$ admits a countable partition $\bigcup\limits_{l \in \Z} \Delta^{(l)}$, and since $A$ is locally constant, it is possible to associate to each $x \in \Delta$ a word $\underline{l_0}(x)\underline{l_1}(x)\underline{l_2}(x)\dots$, where $\underline{l_m}(x) \in \Omega^N$ is such that $T^{mN}(x) \in \Delta^{\underline{l_m}(x)}$. We call such a $\underline{l_m}(x)$ a ``slice" of $x$. To each slice $\underline{l_m}(x)$ of $x$ corresponds the matrix $A^{\underline{l_m}(x)}=A_N(T^{mN}(x))$. The integer $N$ enables us to accelerate the process so as to ``see'' the Lyapunov exponents.\\

In order to study the weak-stable lamination, Forni and the first author introduced the following process. Given $x \in \Delta$, $\delta > 0$ and a line $J$ with $\|J\| < \delta$, we apply successively the matrices $(A^{\underline{l_m}(x)})_m$ to $J$ and estimate the number of children at each step. \textit{A priori} this number might grow exponentially fast; in fact the probability that this process survives up to time $m$ goes to zero exponentially fast with $m$ as we will see.\\

Given $0 < \eta < 1/10$, select a finite set $Z \subset \Omega^N$ of big measure:
\begin{equation}\label{aux}
\mu\left(\bigcup\limits_{\underline{l} \in Z} \Delta^{\underline{l}}\right) > 1 - \eta.
\end{equation}
\indent Since the cocycle is locally constant and log-integrable, there exists $0 < \eta_0 < 1/10$ such that for $0<\eta < \eta_0$, 
\begin{equation}\label{3.13}
    \sum\limits_{\underline{l} \in \Omega^N\backslash Z}\ln \|A^{\underline{l}}\|_0 \mu(\Delta^{\underline{l}}) < \frac{1}{2},
\end{equation}
which will ensure that in average, matrices with label in $Z$ overcome the others.

Moreover, since $Z$ is finite, it is possible by Remark \ref{remnontrivchildren} to ensure that for each $J \in \mathcal{J}$, when the matrix we apply to $J$ is labeled by a word in $Z$, then the only potential child is trivial. But the existence of at least two positive Lyapunov exponents generates a drift that makes each child further from the origin than the original line was. Therefore, if we can do so that a typical $x \in \Delta$ has enough slices in $Z$, then the previous process has good chances to have finite life expectancy.

For every $m \geq 0$, we define 
$$\Gamma_\delta^m(J):=\{x \in \Delta,\ J \cap W_{\delta,mN}^s(x) \neq \emptyset\}.
$$ 
It is the set of points for which the above process will last a minimum of $m$ steps. The main result of this section is the following.
\begin{prop}\label{mainresultprob}
There exists $N_0 \in \mathbb{N}\backslash\{0\}$ such that for $N>N_0$, we may find constants $C >0$, $\kappa > 0$ and $\rho >0$ such that for every $J \in \mathcal{J}$, 
\begin{equation*}
\mu(\Gamma_\delta^m(J)) \leq C e^{-\kappa m} \|J\|^{-\rho}.
\end{equation*}
\end{prop}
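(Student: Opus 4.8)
The plan is to recast the survival condition defining $\Gamma_\delta^m(J)$ as a large deviation event for an additive functional over the slices of $x$, and then to invoke the general large deviations estimate of \cite{AD} for fast decaying cocycles with bounded distortion. First I would record the combinatorial structure of the process: for $x\in\Gamma_\delta^m(J)$ there is a \emph{surviving branch}, that is, a sequence of children $J=J^{(0)},J^{(1)},\dots,J^{(m)}$ in which $J^{(k+1)}$ is one of the children of $J^{(k)}$ under the slice matrix $A^{\underline{l_k}(x)}$ and each $J^{(k)}$ lies in $B_\delta(0)$, so that $\|J^{(k)}\|<\delta$ for all $k\le m$. Since the cocycle is locally constant the whole tree of children depends only on the word $\underline{l_0}\cdots\underline{l_{m-1}}$, hence $\Gamma_\delta^m(J)$ is a union of slice-cylinders and it suffices to bound a measure-weighted count of surviving branches.

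The key analytic device is the weighted sum
\begin{equation*}
P_m:=\sum_{b\ \mathrm{surviving\ to\ depth}\ m}\mu\bigl(\Delta^{\underline{l_0}(b)\cdots\underline{l_{m-1}}(b)}\bigr)\,\|J_b^{(m)}\|^{-\rho},
\end{equation*}
with $\rho>0$ small. Because a surviving branch satisfies $\|J_b^{(m)}\|<\delta$, one has $\mu(\Gamma_\delta^m(J))\le\delta^{\rho}P_m$, so it is enough to prove $P_m\le C e^{-\kappa m}\|J\|^{-\rho}$, noting that $P_0=\|J\|^{-\rho}$. Next I would analyse the effect of a single slice on the weight $\|\cdot\|^{-\rho}$, distinguishing the two types of words. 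For $\underline{l}\in Z$, Remark \ref{remnontrivchildren} (applied with $\delta<\min_{\underline{l}\in Z}\delta_{\underline{l}}$) guarantees that the only child is trivial, while the outward drift coming from the two positive Lyapunov exponents, encoded in the choice of $Z$ and of a large $N$, gives $\|A^{\underline{l}}\cdot J'\|\ge\rho_0\|J'\|$ for a fixed $\rho_0>1$; hence such a slice contributes the contraction factor $\rho_0^{-\rho}$. For $\underline{l}\notin Z$ the branch may split into at most $1+\phi_\delta(\underline{l})$ children: the trivial child changes the weight by a factor at most $\|A^{\underline{l}}\|_0^{\rho}$, while the non-trivial children, obtained by subtracting nonzero lattice vectors $c$, contribute $\sum_c\|A^{\underline{l}}J'-c\|^{-\rho}$, a sum that converges for $\rho<d-1$ (lattice points within distance $r$ of a segment number $\lesssim r^{d-1}\cdot\mathrm{length}$) and is controlled by $\|A^{\underline{l}}\|_0$ and $\phi_\delta(\underline{l})$.

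Assembling these per-slice estimates produces a one-step multiplier of the form
\begin{equation*}
\Lambda(\rho):=\rho_0^{-\rho}\sum_{\underline{l}\in Z}\mu(\Delta^{\underline{l}})+\sum_{\underline{l}\notin Z}\mu(\Delta^{\underline{l}})\,W_\rho(\underline{l}),
\end{equation*}
where $W_\rho(\underline{l})$ bounds the total relative weight of all children of any $J'\in B_\delta(0)$ and depends only on $\|A^{\underline{l}}\|_0$ and $\phi_\delta(\underline{l})$, and the goal becomes to establish $P_m\le C\,\Lambda(\rho)^m\,\|J\|^{-\rho}$ with $\Lambda(\rho)<1$. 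Taking $N>N_0$ large makes $\rho_0$ large, so the $Z$-sum is close to $\rho_0^{-\rho}\ll1$; fast decay of $A$, together with $\mu(\bigcup_{\underline{l}\notin Z}\Delta^{\underline{l}})<\eta$ and the bound (\ref{3.13}), keeps the $Z^c$-sum small for $\rho$ small, whence $\Lambda(\rho)<1$ and we may set $e^{-\kappa}:=\Lambda(\rho)$, the surviving factor $\|J\|^{-\rho}$ being exactly $P_0$. The genuinely delicate point is that a naive iteration of the bounded distortion inequality (\ref{2.3}) loses a factor $K$ per slice and hence a fatal $K^m$; the large deviations theorem of \cite{AD} is precisely what converts the per-slice multiplier $\Lambda(\rho)$ into the clean bound $P_m\le C\Lambda(\rho)^m\|J\|^{-\rho}$ with a single distortion constant $C$. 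I expect the main obstacle to be twofold: securing the uniform outward drift $\rho_0>1$ on $Z$ from the two positive Lyapunov exponents, and fitting the branching caused by the non-trivial children into the scalar large deviations framework of \cite{AD}; once both are in place the exponential decay in $m$ and the $\|J\|^{-\rho}$ dependence follow simultaneously.
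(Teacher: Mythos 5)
Your overall scheme (conditioning on slices, separating $Z$ from its complement, and trying to contract a weighted count of surviving branches, with the correct reduction $\mu(\Gamma_\delta^m(J))\leq \delta^\rho P_m$) has the right shape, but the two pointwise estimates on which your one-step multiplier $\Lambda(\rho)$ rests are false, and they are precisely the probabilistic heart of the argument. First, there is no deterministic drift $\|A^{\underline{l}}\cdot J'\|\geq \rho_0\|J'\|$ with $\rho_0>1$ valid for all $\underline{l}\in Z$ and all $J'\in\mathcal{J}$: the adaptedness condition (\ref{3.2}) gives $\|A\cdot w\|\geq\|w\|$ only for \emph{vectors} in the cone, not for distances of affine lines to the origin, and here $A$ is the restriction of the Rauzy cocycle to $H(\pi)$, hence (being of determinant $\pm1$ with unbounded norms) has singular values $<1$; a line whose closest point to $0$ lies along a contracted singular direction is pulled \emph{toward} the origin. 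The correct statement is probabilistic: for every line $J$, the proportion, measured by any $\nu\in\mathcal{M}$, of $Z$-slices whose matrix keeps the image of $J$ within $\delta$ of the origin is at most $1-\rho$. This is exactly the content of Lemma \ref{keylemma} (Claim 3.7 of \cite{AF1}), it is where the two positive Lyapunov exponents and bounded distortion enter, and it is the source of the factor $(1-\rho)$ in (\ref{3.27}). Second, no quantity $W_\rho(\underline{l})$ bounding the total relative weight of the children of an arbitrary $J'$ can depend only on $\|A^{\underline{l}}\|_0$ and $\phi_\delta(\underline{l})$: a non-trivial child $A^{\underline{l}}\cdot J'-c$ may pass arbitrarily close to (or through) the origin, so a single term $\|A^{\underline{l}}\cdot J'-c\|^{-\rho}$ is unbounded; your lattice-point count controls the \emph{number} of children, not their distances to $0$. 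Moreover, even granting such a bound, your $\Lambda(\rho)$ is an expectation of per-slice factors of size at least $\|A^{\underline{l}}\|_0(2\delta)^\rho$, i.e.\ a first moment, and fast decay (\ref{fastdecay2}) only yields finiteness of moments of order $<\alpha_2$, which may well be $<1$. This is why the paper never averages the per-slice factors: it keeps them inside the conditional constant $C(\underline{d})$ of (\ref{3.26}), uses the truncation $C(\underline{d})\leq 1$, and works with logarithms.

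The second gap is the role you assign to Theorem \ref{thm:LD_for_expansion}. That theorem bounds the measure of the set where a fast-decaying locally constant cocycle expands a vector more slowly than any $c'$ below its expansion constant; it neither removes distortion constants from an iterated estimate nor converts a one-step multiplier into a product bound, so it cannot repair the $K^m$ loss you correctly identify in your induction on $P_m$. In the paper that loss never arises, because the multiplicative structure is built into Lemma \ref{keylemma} itself: the quantity bounded there is $P(\cdot\,|\,\hat\Delta^{\underline{d}})=\sup_{\nu\in\mathcal{M}}P_\nu(\cdot\,|\,\hat\Delta^{\underline{d}})$, and the family $\mathcal{M}=\{\mu^{\underline{l}}\}$ is \emph{exactly} invariant under conditioning on a cylinder and renormalizing (the normalized push-forward of the restriction of $\mu^{\underline{l}'}$ to $\Delta^{\underline{l}}$ is $\mu^{\underline{l}'\underline{l}}$), so the induction behind (\ref{3.27}) is submultiplicative with no loss of constants. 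The large deviations theorem of \cite{AD} is used afterwards for a different purpose: since $\ln C_m(x)\leq S_m(x)$, a Birkhoff-type sum with negative mean $\alpha$ by (\ref{3.24}), applying Theorem \ref{thm:LD_for_expansion} to the scalar cocycle $B=e^{-\gamma}$ gives $\mu\{x\in\Delta,\ S_m(x)\geq m\alpha/2\}\leq C_3e^{-\alpha_3 m}$, whence $\int_\Delta C_m\,d\mu\leq Ce^{-\kappa m}$, and the proposition follows by total probability. As it stands, your proposal amounts to reproving Claim 3.7 of \cite{AF1} with deterministic estimates in place of its probabilistic ones; since those deterministic estimates fail, the argument does not go through without importing that claim (as the paper does) or reconstructing its probabilistic proof.
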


Note that the dependence in $J$  in the previous expression is quite reasonable: the process will survive longer if the initial line $J$ is chosen close to the origin, that is, when $\|J\|$ is small.\\

To prove Proposition \ref{mainresultprob}, we condition the probabilities according to whether we are in $Z$ or not: given measurable sets $X,Y \subset \Delta$ such that $\mu(Y) >0$, and with the notations of Section \ref{background}, let
\begin{equation}\label{3.11}
    P_{\nu}(X|Y) := \frac{\nu(X \cap Y)}{\nu(Y)},\quad \nu \in \mathcal{M}:=\{\mu^{\underline{l}},\ \underline{l} \in \Omega^N\},
\end{equation}
\begin{equation}\label{3.12}
    P(X|Y):=\sup\limits_{\nu \in \mathcal{M}}P_\nu(X|Y).
\end{equation}

We introduce functions $\psi,\Psi$ to encode whether slices belong to $Z$ or not. More precisely, let $\psi\colon \Omega^N \to \Z$ be such that $\psi(\underline{l})=0$ if $\underline{l} \in Z$, and $\psi(\underline{l})\neq\psi(\underline{l}')$ whenever $\underline{l} \neq \underline{l}'$ and $\underline{l}, \underline{l}' \not\in Z$. We denote by $\hat{\Omega}^N$ the set of all words whose length is a multiple of $N$. Let then $\Psi\colon \hat\Omega^N \to \Omega$ be given by $\Psi(\underline{l}^{1}\dots\underline{l}^{m}) = \psi(\underline{l}^{1})\dots\psi(\underline{l}^{m})$, where $\underline{l}^{i}\in\Omega^N$, $i=1,\dots,m$. For any $\underline{d} \in \Omega$, define $$\hat\Delta^{\underline{d}}:=\bigcup\limits_{\underline{l} \in \Psi^{-1}(\underline{d})}\Delta^{\underline{l}}.$$

Let $\rho > 0$. For each $\underline{d} \in \Omega$, $|\underline{d}|=m$, we define $C(\underline{d}) \geq 0$ as the smallest number such that
\begin{equation}\label{3.26}
    P(\Gamma_\delta^m(J)|\hat\Delta^{\underline{d}}):=\sup\limits_{\nu \in \mathcal M}P_\nu(\Gamma_\delta^m(J)|\hat\Delta^{\underline{d}}) \leq C(\underline{d}) \|J\|^{-\rho},\quad J \in \mathcal{J}.
\end{equation}

Note that $C(\underline{d}) \leq 1$ for all $\underline{d}$; indeed, if $\|J\|>\delta$, then $\Gamma_\delta^m(J) = \emptyset$, else $P(\Gamma_\delta^m(J)|\hat\Delta^{\underline{d}}) \|J\|^{\rho} \leq \delta^\rho \leq 1$. The following technical estimate is the key step in the proof of Proposition \ref{mainresultprob}.

\begin{lem}[Claim 3.7, Avila-Forni \cite{AF1}]\label{keylemma} There exists $N_0\in \mathbb{N}\backslash\{0\}$ such that for $N>N_0$, if $Z \subset \Omega^N$ and $0 < \eta < \eta_0$ are taken such that \eqref{aux} and \eqref{3.13} hold, then there exists $\rho_0(Z) > 0$ such that for $0<\rho< \rho_0(Z)$, and for $0<\delta < 1/10$ sufficiently small, we have for any $\underline{d}=(d_1\dots d_m)\in \Omega$:
\begin{equation}\label{3.27}
    C(\underline{d}) \leq \prod\limits_{d_i=0}(1-\rho) \prod\limits_{d_i \neq 0,\ \psi(\underline{l}^{i})=d_i} \|A^{\underline{l}^i}\|_0^\rho (1+\|A^{\underline{l}^i}\|_0(2\delta)^\rho).
\end{equation}
\end{lem}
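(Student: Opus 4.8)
The plan is to prove \eqref{3.27} by induction on $m=|\underline d|$, establishing a one-step recursion that peels off the \emph{first} slice. Writing $\underline d=d_1\underline d'$ with $|\underline d'|=m-1$, I would show
\begin{equation*}
C(d_1\underline d')\le f(d_1)\,C(\underline d'),\qquad f(0):=1-\rho,\quad f(d_1):=\|A^{\underline l^1}\|_0^\rho\big(1+\|A^{\underline l^1}\|_0(2\delta)^\rho\big)\ \ (d_1\neq0),
\end{equation*}
where for $d_1\neq0$, $\underline l^1$ is the unique slice outside $Z$ with $\psi(\underline l^1)=d_1$. Since the empty word gives $C(\emptyset)\le1$ (the remark preceding the statement), iterating this recursion over the $m$ indices reproduces the claimed product.

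The first ingredient is a decomposition of $\Gamma_\delta^m(J)$ through the children of $J$. Using the cocycle relation $A_{mN}(x)=A_{(m-1)N}(T^Nx)\,A^{\underline l_0(x)}$ together with integrality of the matrices, any $w\in J\cap W_{\delta,mN}^s(x)$ satisfies $A^{\underline l_0(x)}\cdot w=c+w'$ with $c\in\Z^d$ and $w'\in J'\cap W_{\delta,(m-1)N}^s(T^Nx)$, where $J'$ is a child of $J$ under the first slice (trivial if $c=0$, non-trivial otherwise). Hence
\begin{equation*}
\Gamma_\delta^m(J)\subseteq\bigcup_{\underline l\in\Omega^N}\Big(\Delta^{\underline l}\cap T^{-N}\!\!\bigcup_{J'\text{ child of }J\text{ under }A^{\underline l}}\!\!\Gamma_\delta^{m-1}(J')\Big).
\end{equation*}
Conditioning on $\hat\Delta^{\underline d}=\bigcup_{\psi(\underline l)=d_1}(\Delta^{\underline l}\cap T^{-N}\hat\Delta^{\underline d'})$ and summing over the admissible first slices, the bounded distortion of $T$ (Lemma \ref{bdddistt}) is exactly what lets me push the conditional measure forward by $T^N$ and bound each term by $P(\Gamma_\delta^{m-1}(J')|\hat\Delta^{\underline d'})\le C(\underline d')\|J'\|^{-\rho}$ uniformly in $\nu\in\mathcal M$; this is why the supremum over $\mathcal M$ is built into the definition of $P$. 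The recursion thus reduces to bounding $\sum_{J'}\|J'\|^{-\rho}$ in the two cases.

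For $d_1\neq0$ the first matrix is the fixed $A:=A^{\underline l^1}$. The trivial child $A\cdot J$ contributes $\|A\cdot J\|^{-\rho}\le\|A\|_0^\rho\|J\|^{-\rho}$ by adaptedness \eqref{3.2}, producing the factor $\|A\|_0^\rho$ and the leading $1$. The non-trivial children sit within $\delta$ of the nonzero integer points met by the segment $A\cdot(J\cap B_\delta(0))$, whose length is $\lesssim\|A\|_0\,\delta$; a lattice-point count for a thin tube around this segment, valid for $\delta$ small and $\rho<\rho_0(Z)$, bounds their total contribution by $\|A\|_0^{1+\rho}(2\delta)^\rho\|J\|^{-\rho}$, the remaining factor. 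For $d_1=0$ I would use that, by finiteness of $Z$ and Remark \ref{remnontrivchildren}, for $\delta$ small each $Z$-slice produces only its trivial child $A^{\underline l}\cdot J$; the gain then comes from the drift. Because $g>1$ furnishes a second positive Lyapunov exponent (Subsection \ref{genresup1}), for $N$ large a set of $Z$-slices of conditional measure close to $1$ expands the distance to the origin by a definite factor $\theta=\theta(N)>1$, so that $\|A^{\underline l}\cdot J\|^{-\rho}\le\theta^{-\rho}\|J\|^{-\rho}$ there, while adaptedness gives $\le\|J\|^{-\rho}$ on the rest and the contribution is $0$ where the trivial child already escapes $B_\delta(0)$. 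Averaging and linearizing $\theta^{-\rho}\approx1-\rho\log\theta$ yields $(1-\rho)\|J\|^{-\rho}$ once $N>N_0$ and $\rho<\rho_0(Z)$.

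The main obstacle is precisely this strict gain $(1-\rho)$ in the case $d_1=0$: adaptedness alone only gives $\|A^{\underline l}\cdot J\|\ge\|J\|$, hence the non-improving factor $1$, so the whole argument hinges on upgrading this to a definite expansion of the line-to-origin distance on a large-measure set of $Z$-slices. This is the genuinely dynamical step, relying on positivity of the second Lyapunov exponent — the reason the genus-one case had to be excluded — and it forces the calibration $N>N_0$, then $\rho<\rho_0(Z)$, then $\delta$ small, in that order. A secondary technical point is making the bounded-distortion transfer of the second paragraph uniform in $\nu\in\mathcal M$ and compatible with the conditioning on $\hat\Delta^{\underline d}$, so that the one-step factors compose cleanly along the induction.
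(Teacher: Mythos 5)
Your attempt cannot be compared with a proof in this paper, because the paper gives none: Lemma \ref{keylemma} is imported verbatim as Claim 3.7 of \cite{AF1}, so the benchmark is the argument there. Your skeleton does match it — induction on $m$ peeling off the first slice, the children decomposition via integrality, bounded distortion and the class $\mathcal{M}$ to transfer conditional measures, base case $C(\emptyset)\le 1$ — but both halves of your one-step estimate rest on claims that are false or missing, and these are precisely the two substantive points. The more serious one concerns $d_1=0$: you assert (twice, including where you identify the ``main obstacle'') that adaptedness gives $\|A^{\underline l}\cdot J\|\ge\|J\|$ for every slice and every line. This is false. Property \eqref{3.2} gives $\|A\cdot w\|\ge\|w\|$ only for vectors $w$ in the cone; the point of $J$ realizing $\dist(0,J)$ need not lie in the cone, and a line merely \emph{parallel} to $\Theta$ can be pulled toward the origin. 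Indeed, let $v_d$ be the most contracted singular direction of $A=A^{\underline l}$, with singular value $\sigma_d\le 1$ since $\det A=1$: either $\pm v_d\in\Theta$, which by \eqref{3.2} forces $\sigma_d=1$ and $A$ to be a permutation matrix, or $v_d$ has mixed signs, in which case there is $u\in\Theta\cap v_d^{\perp}$ and the line $J=\{\epsilon v_d+tu\}$ satisfies $\|A\cdot J\|\le\sigma_d\|J\|\le\|J\|$ (already $A=\left(\begin{smallmatrix}2&1\\1&1\end{smallmatrix}\right)$ contracts toward $0$ the lines parallel to its top eigendirection). This is not a technicality: lines whose translation part points into contracting directions being attracted to $0$ is exactly why the weak-stable lamination is nontrivial and why a second positive exponent is needed at all. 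Consequently the inequality to prove is not ``$q\theta^{-\rho}+(1-q)\cdot 1\le 1-\rho$'': on the exceptional slices the factor is not $1$ but can be as large as $e^{C\rho N}$, and what must be arranged is the quantitative balance (measure, uniform over lines, of exceptional slices in $Z$) $\times$ (worst-case log-contraction over $Z$) small compared with $\ln\theta\sim\lambda_2 N$ — a condition of the same nature as \eqref{3.13} and \eqref{3.24}, which the choice of $Z$ and $N$ in \cite{AF1} is engineered to guarantee and which your proposal never formulates.

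The second gap is in the case $d_1\neq0$. The recursion needs $\sum_k\|J_{\underline l,k}\|^{-\rho}\le\|A^{\underline l}\|_0^{1+\rho}(2\delta)^\rho\|J\|^{-\rho}$, and a ``lattice-point count in a thin tube'' cannot yield this: counting bounds only the \emph{number} of non-trivial children (roughly $2\delta\|A^{\underline l}\|_0+2$), whereas the sum requires a \emph{lower} bound on the distance of each non-trivial child to the origin. Nothing in a counting argument prevents a lattice point $c$ from lying on, or arbitrarily close to, the line $A^{\underline l}\cdot J$; if it did, the child $A^{\underline l}\cdot J-c$ would contain or nearly contain $0$, its term $\|J_{\underline l,k}\|^{-\rho}$ would be infinite or huge, and in the extreme case one even has $\Gamma_\delta^{m-1}(J_{\underline l,k})=\Delta$ (since $0\in W^s_{\delta,n}(x)$ for every $x,n$), so the induction collapses. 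Ruling this out is the real content of this step: one must combine integrality ($q=(A^{\underline l})^{-1}c$ is a nonzero lattice point, hence $\|q\|\ge1$) with $\|A^{\underline l}u\|\ge\|u\|$ for $u\in\Theta$ to show that any $c$ generating a non-trivial child stays at a definite distance from $A^{\underline l}\cdot J$; that estimate is where the factor $\|A^{\underline l}\|_0(2\delta)^\rho$ actually comes from. Since both key estimates of your induction step are unjustified — and, as stated, false — the proposal does not prove the lemma.
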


This means that at each occurrence of a slice in $Z$, the previous quantity decreases by at least a given factor. Therefore, if the behavior of matrices with label in $Z$ is prevalent, it follows that for a typical infinite word $\underline{d}=d_1 d_2 d_3\dots$, the probability $P(\Gamma_\delta^m(J)|\hat\Delta^{\underline{d}^m})$ goes to zero exponentially fast with respect to $m$, where we have set $\underline{d}^m:=d_1 d_2 \dots d_m$.

\subsection{Large deviations: proof of Proposition \ref{mainresultprob}}

At this point we fix $N>N_0$, $Z \subset \Omega^N$, $0 < \eta < \eta_0$, $0< \rho < \rho_0(Z)$ and $0<\delta<1/10$ in such a way that the assumptions of Lemma \ref{keylemma} are satisfied; we also assume that $\delta$ is small enough such that\footnote{By \eqref{aux} and \eqref{3.13}, the left hand side is less than $\frac{1}{2} \rho(\eta-\frac{1}{2})<0$ when $\delta \to 0$.}
\begin{equation}\label{3.24}
    \sum\limits_{\underline{l} \in \Omega^N\backslash Z}(\rho \ln \|A^{\underline{l}}\|_0 + \ln(1+\|A^{\underline{l}}\|_0(2\delta)^\rho))\mu(\Delta^{\underline{l}})-\rho \mu\left(\bigcup\limits_{\underline{l} \in Z} \Delta^{\underline{l}}\right)=\alpha <0.
\end{equation}
This ensures that on average, the behavior of words in $Z$ prevails. For any $x \in \Delta$, we let
\begin{equation*}
\gamma(x) := \left\{
    \begin{array}{ll}
        -\rho & \mbox{if}\ x \in \bigcup\limits_{\underline{l} \in Z} \Delta^{\underline{l}}, \\
        \rho \ln \|A^{\underline{l}}\|_0+\ln(1+\|A^{\underline{l}}\|_0(2 \delta)^\rho) & \mbox{if}\ x \in \Delta^{\underline{l}},\ {\underline l}\in \Omega^N\backslash Z.
    \end{array}
\right.
\end{equation*}

For each $k \in \N$, we define the random variable
\begin{displaymath}
X_k\colon
\left\{
  \begin{array}{rcl}
    \Delta & \longrightarrow &\mathbb{R} \\
    x & \longmapsto & \gamma(T^{kN}(x)),
  \end{array}
\right.
\end{displaymath}
and we set $S_m:= \sum\limits_{k=0}^{m-1} X_k$. From (\ref{3.24}) and the $T-$invariance of $\mu$, we get: for every $k \geq 0$,
\begin{equation*}
\mathbb{E}(X_k):=\int_\Delta \gamma(T^{kN}(x))\ d\mu(x)=\alpha < 0.
\end{equation*}

For $x \in \hat{\Delta}^{\underline{d}}$, $|\underline{d}|=m$, let $C_m(x):=C(\underline{d})$. Then by (\ref{3.27}),
\begin{equation*}
    \ln C_m(x) \leq S_m(x).
\end{equation*}
Since $T$ is ergodic, Birkhoff's ergodic theorem implies that for almost every $x \in \Delta$,
\begin{equation*}
\limsup \limits_{m \to \infty}\frac{\ln C_m(x)}{m} \leq \limsup \limits_{m \to \infty}\frac{S_m(x)}{m} = \alpha<0.
\end{equation*}
It follows that for almost every $x \in \Delta$, there exists $m_0$ such that for $m \geq m_0$ we have $C_m(x) < e^{\frac{m \alpha}{2}}$. Define
\begin{equation*}
E_m:=\{x \in \Delta,\ C_m(x) \geq e^{\frac{m\alpha}{2}}\}.
\end{equation*}
We show the following large deviations result for $C_m$:
\begin{lemma}\label{lemmalargedev}
There exist $\beta < 1$ and $\widetilde C > 0$ such that $\mu(E_m) \leq \widetilde C \beta^m$.
\end{lemma}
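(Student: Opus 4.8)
The plan is to deduce the lemma from the inequality $\ln C_m(x)\leq S_m(x)$ proved above, together with a large--deviation upper bound for the sums $S_m$ furnished by the general large deviations result for maps with bounded distortion of \cite{AD}. First I would observe that if $x\in E_m$, then $\ln C_m(x)\geq \frac{m\alpha}{2}$, hence $S_m(x)\geq \frac{m\alpha}{2}$; therefore
\begin{equation*}
E_m\subseteq\Big\{x\in\Delta,\ S_m(x)\geq \tfrac{m\alpha}{2}\Big\},
\end{equation*}
and it suffices to estimate the measure of the right--hand side. Since $\mathbb{E}(X_k)=\alpha$ for every $k$, the sum $S_m$ has mean $m\alpha$; as $\alpha<0$, the threshold $\frac{m\alpha}{2}$ lies strictly \emph{above} this mean, so $\{S_m\geq \frac{m\alpha}{2}\}$ is a genuine upper--deviation event, of size $-\frac{\alpha}{2}=\frac{|\alpha|}{2}>0$ per step.

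The analytic input I would check next is that the observable $\gamma$ has a finite exponential moment. By construction $\gamma\geq-\rho$ (on $\bigcup_{\underline l\in Z}\Delta^{\underline l}$ one has $\gamma=-\rho$, while on the complement $\|A^{\underline l}\|_0\geq1$ forces $\gamma\geq0$), so only the upper tail matters. On $\Delta^{\underline l}$ with $\underline l=l_1\dots l_N\in\Omega^N\backslash Z$, submultiplicativity gives $\|A^{\underline l}\|_0\leq\prod_{i=1}^N\|A^{(l_i)}\|_0$, whence $\gamma\leq(\rho+1)\ln\|A^{\underline l}\|_0+C$ for a constant $C=C(\delta)$. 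Fast decay of $A$, i.e.\ \eqref{fastdecay2}, gives $\mu(\|A^{(l)}\|_0\geq n)\leq C_2n^{-\alpha_2}$, so that $\ln\|A^{(l)}\|_0$ has exponential tails; combining this with bounded distortion to factorise $\mu(\Delta^{\underline l})$ over the $N$ slices, I obtain
\begin{equation*}
\Lambda(s):=\int_\Delta e^{s\gamma}\,d\mu<\infty\qquad\text{for all sufficiently small }s>0 .
\end{equation*}
Moreover $\Lambda(0)=1$ and $\Lambda'(0)=\int_\Delta\gamma\,d\mu=\alpha$ by the very definition \eqref{3.24} of $\alpha$.

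Finally I would run the exponential Chebyshev argument, the factorisation being exactly what bounded distortion of the accelerated map $T^N$ provides (this is the content of the large deviations estimate of \cite{AD}): for $s$ in the range where $\Lambda(s)<\infty$ one has $\mathbb{E}(e^{sS_m})\leq K\,\Lambda(s)^m$ for a uniform constant $K$. Since $\Lambda(s)e^{-s\alpha/2}=1+\frac{s\alpha}{2}+o(s)$ as $s\to0^+$ and $\alpha<0$, I may fix $s_0>0$ small with $\beta:=\Lambda(s_0)e^{-s_0\alpha/2}<1$; Markov's inequality then yields
\begin{equation*}
\mu\Big(S_m\geq\tfrac{m\alpha}{2}\Big)\leq e^{-s_0m\alpha/2}\,\mathbb{E}\big(e^{s_0S_m}\big)\leq K\beta^m,
\end{equation*}
so that $\mu(E_m)\leq\widetilde C\beta^m$ with $\widetilde C:=K$. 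The main obstacle is precisely the step $\mathbb{E}(e^{sS_m})\leq K\Lambda(s)^m$: the increments $X_k=\gamma\circ T^{kN}$ are not independent and $\gamma$ is unbounded, so one cannot simply multiply single--step moments. Both difficulties are overcome by the hypotheses verified above --- bounded distortion (Lemma \ref{boundddd}) decouples the slices up to the uniform factor $K$, while fast decay of $A$ guarantees the finiteness of $\Lambda(s)$ for small $s$ --- which together are exactly the assumptions under which the large deviations machinery of \cite{AD} applies.
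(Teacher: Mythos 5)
Your first step is exactly the paper's: from $\ln C_m(x)\leq S_m(x)$ you correctly get $E_m\subseteq\{x\in\Delta,\ S_m(x)\geq m\alpha/2\}$, and your verification that $\gamma$ has finite exponential moments of small order (using $\gamma\geq-\rho$ and fast decay of $A$) is sound. The genuine gap is the central factorisation claim $\mathbb{E}(e^{sS_m})\leq K\,\Lambda(s)^m$ with a \emph{single} uniform constant $K$. Bounded distortion does not provide this. Conditioning successively on the cylinders $\Delta^{\underline{l}^1},\Delta^{\underline{l}^1\underline{l}^2},\dots$, what (\ref{2.2}) gives is $\mathbb{E}_{\mu^{\underline{l}}}(e^{s\gamma})\leq K\Lambda(s)$ for \emph{every} conditioning word $\underline{l}$, hence only $\mathbb{E}(e^{sS_m})\leq (K\Lambda(s))^m$: a factor $K$ per slice, not once. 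With this weaker bound the Chernoff step needs $K\Lambda(s)e^{-s\alpha/2}<1$, i.e.\ roughly $s>2\ln K/|\alpha|$; but $s$ is confined to the range where $\Lambda(s)<\infty$ (about $s(\rho+1)<\alpha_2$, by fast decay) and where your expansion $\Lambda(s)e^{-s\alpha/2}=1+s\alpha/2+o(s)$ is meaningful, and nothing guarantees these ranges overlap, since $K$ may be large while $|\alpha|$ and $\alpha_2$ are small. Nor can you repair this by replacing $\Lambda(s)$ with $\sup_{\underline{l}}\mathbb{E}_{\mu^{\underline{l}}}(e^{s\gamma})$: bounded distortion rescales the positive and negative parts of $\gamma$ by $K^{\pm 1}$ in opposite directions, so the conditional means $\int\gamma\,d\mu^{\underline{l}}$ need not be negative at all, and that supremum can be increasing at $s=0$. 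Attributing the inequality to ``the large deviations machinery of \cite{AD}'' does not close the gap, because that machinery does not prove this moment bound; it proves the exponential decay by a different mechanism.

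The missing idea is how the distortion constant gets absorbed: one works with blocks of a large fixed length $n_0$ and with the conditional measures, first showing that the block means are uniformly negative, $\int(\text{block sum})\,d\mu^{\underline{l}}\leq -\theta<0$ for all $\underline{l}$ once $n_0$ is large (Birkhoff averaging beats the constant $K$), so that the conditional exponential moment over one block is $\leq e^{-\xi_0 n_0}<1$ \emph{uniformly} in the conditioning; iterating over blocks then costs nothing. This is precisely the structure of the proof of Proposition \ref{prop anom lyap behh} in the paper and of Theorem 25 in \cite{AD}. The paper's own proof of Lemma \ref{lemmalargedev} sidesteps the issue entirely by a clean reduction: it defines the scalar cocycle $U:=T^N$, $B(x):=e^{-\gamma(x)}$, checks that $B$ is fast decaying (since $A$ is and $Z$ is finite), computes its expansion constant $c=-\alpha>0$ by Birkhoff's theorem from $B_m(x)=e^{-S_m(x)}$, and applies Theorem \ref{thm:LD_for_expansion} with $c'=-\alpha/2$, which yields $\mu\{x,\ S_m(x)/m\geq\alpha/2\}\leq C_3e^{-\alpha_3 m}$ directly. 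Your argument becomes correct if you replace the unjustified factorisation by this reduction (or by the long-block conditional argument).
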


To prove this result, we will use a general estimate obtained in \cite{AD} and that we now recall. Let $U\colon \Delta \to \Delta$ be a transformation with bounded distortion with respect to the reference measure $\mu$, let $\nu$ be the invariant measure, and $(U,B)$ a locally constant cocycle over $U$. The {\it expansion constant} of $(U,B)$ is the maximal $c \in \R$ such that for all $v\neq 0$ and almost every $x \in \Delta$,\footnote{The limit exists by Oseledets theorem applied to $\nu$.}
\begin{equation*}
\lim\limits_{n \to \infty} \frac{1}{n}\ln\|B_n(x) \cdot v\| \geq c.
\end{equation*}

The following theorem tells us that the measure of points which exhibit anomalous Lyapunov behavior at time $n$ decays exponentially fast with $n$.

\begin{thm}[Theorem 25, Avila-Delecroix \cite{AD}] \label{thm:LD_for_expansion}
Assume that $B$ is fast decaying. For every $c'<c$,
there exist $C_3>0$, $\alpha_3>0$ such that for every unit vector $v$,
\begin{equation*}
\mu \{x,\ \|B_n(x) \cdot v\| \leq e^{c' n}\} \leq C_3 e^{-\alpha_3 n}.
\end{equation*}
\end{thm}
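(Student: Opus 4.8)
The plan is to prove this large deviation estimate by the exponential moment (Chernoff) method, using the bounded distortion of $U$ to obtain submultiplicativity of the relevant negative moments and the fast decay of $B$ to guarantee their finiteness. First I would reduce the statement to a moment bound. For $s>0$, Markov's inequality applied to $\|B_n(x)\cdot v\|^{-s}$ gives, for every unit vector $v$,
\begin{equation*}
\mu\{x:\ \|B_n(x)\cdot v\|\le e^{c'n}\}\le e^{sc'n}\int_\Delta\|B_n(x)\cdot v\|^{-s}\,d\mu(x)\le e^{sc'n}L_n(s),
\end{equation*}
where $L_n(s):=\sup_{\|v\|=1}\int_\Delta\|B_n(x)\cdot v\|^{-s}\,d\mu(x)$. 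It thus suffices to find $s>0$ for which the exponential growth rate of $L_n(s)$ is strictly below $-sc'$.

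Next I would show that $K^2L_n(s)$ is submultiplicative. Writing $B_{m+n}(x)=B_n(U^m(x))\,B_m(x)$ and setting $w=B_m(x)\cdot v/\|B_m(x)\cdot v\|$, one has $\|B_{m+n}(x)\cdot v\|=\|B_m(x)\cdot v\|\,\|B_n(U^m(x))\cdot w\|$. Since $(U,B)$ is locally constant, both $B_m(x)$ and the renormalized vector $w$ are constant on each cylinder $\Delta^{\underline{l}}$ with $|\underline{l}|=m$; the bounded distortion property (Lemma \ref{bdddistt}, cf.\ \eqref{2.3}) then lets me push $\restriction{\mu}{\Delta^{\underline{l}}}$ forward under $U^m$ at the cost of a factor $K^2\mu(\Delta^{\underline{l}})$. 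Summing over $\underline{l}$ yields $L_{m+n}(s)\le K^2L_m(s)L_n(s)$, so by Fekete's lemma $p(s):=\lim_n\frac1n\ln L_n(s)$ exists. Finiteness of $L_1(s)$ for $0<s<\alpha_2$ follows from fast decay \eqref{fastdecay2}: from $\|B^{(l)}\cdot v\|\ge\|B^{(l)}\|_0^{-1}$ we get $\|B^{(l)}\cdot v\|^{-s}\le\|B^{(l)}\|_0^{s}$, and a layer-cake summation gives $\sum_l\mu(\Delta^{(l)})\|B^{(l)}\|_0^{s}<\infty$.

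With submultiplicativity in hand, everything reduces to a single base-case estimate: it is enough to exhibit $N_*\ge1$ and $s>0$ with $e^{sc'N_*}K^2L_{N_*}(s)<1$, for then $e^{sc'n}L_n(s)$ decays geometrically along multiples of $N_*$, the residual block of length $<N_*$ costing only a bounded factor, which yields $C_3e^{-\alpha_3 n}$ after optimizing over $s$. To prove the base case I would fix $c''$ with $c'<c''<c$ and split the integral defining $L_{N_*}(s)$ over the good set $\{x:\|B_{N_*}(x)\cdot v\|\ge e^{c''N_*}\}$, where the integrand is $\le e^{-sc''N_*}<e^{-sc'N_*}$, and its complement, which I would bound by Hölder's inequality using the positive-moment estimate $\int_\Delta\|B_{N_*}\|_0^{sp}\,d\mu<\infty$ (again finite, for fixed $N_*$ and $sp<\alpha_2$, by fast decay and bounded distortion) together with the smallness of the measure of the bad set.

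\textbf{The main obstacle} is precisely this last point: making $\sup_{\|v\|=1}\mu\{x:\|B_{N_*}(x)\cdot v\|<e^{c''N_*}\}$ small \emph{uniformly in} $v$ as $N_*\to\infty$. This is where the definition of the expansion constant $c$ must be used, and a naive reduction to finitely many directions by covering the sphere $S^{d-1}$ is circular, since the number of $\varepsilon$-balls required grows exponentially at exactly the rate one is trying to beat. For the \emph{guaranteed} expansion of every vector one can bypass the difficulty through the identity $\sup_{\|v\|=1}\|B_N(x)\cdot v\|^{-s}=\|B_N(x)^{-1}\|^{s}$, which replaces the bad set by the single $v$-free event $\{\|B_N(x)^{-1}\|\ge e^{-c''N}\}$; obtaining the full statement for the expansion constant instead requires keeping the supremum inside throughout and exploiting the submultiplicative structure above, combined with a compactness argument on $S^{d-1}$, and it is this interplay between the definition of $c$ and fast decay that constitutes the delicate step.
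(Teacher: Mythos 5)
You have set this up against a statement the paper itself does not prove: Theorem \ref{thm:LD_for_expansion} is imported verbatim as Theorem 25 of \cite{AD}, so the only proof to compare with is the one in that reference. Your skeleton does match the moment-method argument there, and the steps you carry out are correct: the Chernoff reduction via $L_n(s)=\sup_{\|v\|=1}\int_\Delta\|B_n(x)\cdot v\|^{-s}\,d\mu$; the almost-submultiplicativity $L_{m+n}(s)\leq K^2L_m(s)L_n(s)$, which is legitimate precisely because the cocycle is locally constant (both $B_m(x)$ and the renormalized direction $w$ are constant on $m$-cylinders) and because of \eqref{2.2}--\eqref{2.3}; the finiteness of $L_n(s)$ for $0<s<\alpha_2$ by layer-cake summation from \eqref{fastdecay2}; and the reduction to a single-scale base case via Fekete. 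I would add one quantitative remark in your favor: if in the base case you take H\"older exponent $p=s_1/s$ and scale $s\sim u_0/N$, the positive-moment factor is $e^{O(u_0)}$, so the base case requires only that $\sup_{\|v\|=1}m_N(v)$, with $m_N(v):=\mu\{x:\ \|B_N(x)\cdot v\|<e^{c''N}\}$, drop below a fixed constant for a single large $N$ --- not that it be exponentially small.

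The genuine gap is exactly the one you flag and then leave open: nothing in your proposal establishes the uniform-in-$v$ smallness of $m_N(v)$, and this is the only place where the definition of the expansion constant $c$ (quantified as: for \emph{each} $v$, for a.e.\ $x$) can enter; everything else is soft. Pointwise decay $m_N(v)\to 0$ follows from the definition of $c$ together with the integrability supplied by fast decay, but $m_N$ is merely lower semicontinuous in $v$ (it is a countable sum of indicators of open conditions over cylinders carrying unbounded matrices), so compactness of $S^{d-1}$ does not upgrade pointwise to uniform, and you correctly diagnose the $\varepsilon$-net count as circular. What a completion needs --- and what the argument in \cite{AD} supplies --- is two further ideas: first, for fixed $N$ all but $\varepsilon$ of the measure lies in finitely many $N$-cylinders, so at the cost of lowering $c''$ to some $c'''\in(c',c'')$ one gets an \emph{open} neighborhood of each $v$ on which $\mu\{x:\|B_N(x)\cdot w\|<e^{c'''N}\}\leq 2\varepsilon$, and compactness yields finitely many directions with \emph{adapted} times $N_1,\dots,N_K$; second, since different directions demand different times, these blocks must be concatenated adaptively, choosing the next block length according to the current renormalized direction --- which is feasible precisely because that direction is constant on cylinders and $T$ has bounded distortion --- to manufacture a single uniform scale feeding your geometric iteration. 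Your proposed bypass $\sup_{\|v\|=1}\|B_N(x)\cdot v\|^{-s}=\|B_N(x)^{-1}\|^s$ proves only the weaker statement in which $c$ is replaced by the a.s.\ uniform (bottom) expansion rate, which in general is strictly smaller than the expansion constant as defined. It is worth noting that the sole application in this paper, Lemma \ref{lemmalargedev}, concerns $B(x)=e^{-\gamma(x)}\in\mathrm{GL}(1,\R)$, where the supremum over $v$ is vacuous and $c=-\alpha$ by Birkhoff, so your argument does close there; but as a proof of Theorem \ref{thm:LD_for_expansion} as stated, for general $d$, the key step is missing.
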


\noindent \textit{Proof of Lemma \ref{lemmalargedev}.} With the notations introduced before, we define the cocycle $(U,B)$, where $U:=T^N$ and $B(x):=e^{-\gamma(x)} \in \mathrm{GL}(1,\R)$. When $x \in \Delta^{\underline{l}},\ |{\underline{l}}|=N$, we denote $B^{({\underline{l}})} := B(x)$. Note that the map $U$ has bounded distortion.

Since $Z$ is finite, for $n$ big enough, $|B^{({\underline{l}})}|_0 \geq n$ implies that ${\underline l}\in \Omega^N \backslash Z$, and it follows from the definition of $\gamma$ that  
\begin{equation*}
|B^{({\underline{l}})}|_0 =\|A^{{\underline{l}}}\|_0^\rho (1+\|A^{{\underline{l}}}\|_0(2\delta)^{\rho}) \geq \|A^{{\underline{l}}}\|_0^\rho,
\end{equation*}
and then $B$ is fast decaying since $A$ is.

We deduce from (\ref{3.24}) the value of the expansion constant of the cocycle $(U,B)$: we have $c=-\alpha > 0$. Indeed, $B_m(x)=e^{-S_m(x)}$, so by Birkhoff's ergodic theorem, for $|v|=1$ and almost every $x \in \Delta$,
\begin{equation*}
\lim\limits_{n \to \infty} \frac{1}{m} \ln |B_m(x) \cdot v| = \lim\limits_{m \to \infty} -\frac{S_m(x)}{m} \stackrel{(\ref{3.24})}{=} -\alpha.
\end{equation*}
We apply Theorem \ref{thm:LD_for_expansion} with $c':=-\alpha/2 < c$ to get $C_3>0$, $\alpha_3 > 0$ such that
\begin{equation*}
\mu\{x\in \Delta,\ B_m(x) \leq e^{c' m}\} \leq C_3 e^{-\alpha_3 m}.
\end{equation*}
Since $e^{S_m(x)} = (B_m(x))^{-1}$, the last inequality gives
\begin{equation*}
\mu\left(\left\{x \in \Delta,\ \frac{S_m(x)}{m} \geq \frac{\alpha}{2}\right\}\right) \leq C_3 e^{-\alpha_3 m}.
\end{equation*}
By definition, $\ln C_m(x) \leq S_m(x)$, hence
\begin{align*}
\mu(E_m) &= \mu(\{x\in \Delta,\ C_m(x) \geq e^{\frac{m\alpha}{2}}\})\\ 
&\leq \mu\left(\left\{x\in \Delta,\ \frac{S_m(x)}{m} \geq \frac{\alpha}{2}\right\}\right)\leq C_3 e^{-\alpha_3 m},
\end{align*} which concludes. \qed\\

Using Lemma \ref{lemmalargedev} and the fact that $C_m(x) \leq 1$, we therefore obtain
\begin{align*}\label{3.33}
    \int_{\Delta} C_m(x)\ d\mu(x) &= \int_{E_m}C_m(x) d\mu(x) + \int_{\Delta\backslash E_m} C_m(x) d\mu(x)\\
    &\leq \mu(E_m) + \int e^{m\alpha/2} d\mu(x)\\
    &\leq \widetilde C \beta^m+ e^{m\alpha/2}\\
    &\leq C e^{-\kappa m},
\end{align*}
with $\kappa := \min(-\ln \beta,-\alpha/2) > 0$ and $C:=1+\widetilde C >0$.\\

From (\ref{3.26}) and the formula of total probability, we thus get
\begin{align*}
\mu(\Gamma_\delta^m(J)) &\leq \sum\limits_{\underline{d} \in \hat{\Omega},\ |\underline{d}|=m} \mu(\hat{\Delta}^{\underline{d}}) P_\mu(\Gamma_\delta^m(J)|\hat{\Delta}^{\underline{d}})\\
& \leq \int_\Delta C_m(x) \|J\|^{-\rho}\ d\mu(x) \leq C e^{-\kappa m}\|J\|^{-\rho},
\end{align*}
which concludes the proof of Proposition \ref{mainresultprob}. \qed

\section{End of the proof of weak mixing}

\subsection{Weak mixing for i.e.t.'s: proof of Theorem \ref{maintheo}}\label{proofoftheomaintheo}

We conclude here the proof of Theorem \ref{maintheo}. 
We consider the cocycle $(T,A)$ defined in Subsection \ref{genresup1}. Note that it meets all the assumptions made in Section \ref{sectionweakstable}. Since $T$ is fast decaying, it is also legitimate to use the results of Subsection \ref{fastdecayhausdorffdim} to convert Proposition \ref {mainresultprob}
into an estimate on Hausdorff dimension. With the notations introduced in Subsection \ref{subsproba}, let $\Theta:=\overline{\mathbb{P}^{d-1}_+}$ and $\mathcal{J}=\mathcal{J}(\Theta)$, and choose some integer $N \geq 0$ sufficiently big so that the assumptions of Lemma \ref{keylemma} are satisfied.

Let $J$ be a line in $\mathcal{J}$. For every $m \geq 0$, we denote
\begin{equation}\label{defgamma}
\Gamma_\delta^m(J):=\{[\lambda] \in \Delta,\ J \cap W_{\delta,mN}^s([\lambda]) \neq \emptyset\}.
\end{equation}
With our previous notations, it is clear that $\Gamma_\delta^m(J)$ is a union of $\hat\Delta^{\underline{d}}$, with $|\underline{d}|=m$. Now, Proposition \ref{mainresultprob} implies that there exist constants $C,\kappa,\rho >0$ such that $\mu(\Gamma_\delta^m(J)) \leq C e^{-\kappa m}\|J\|^{-\rho}$, which yields
\begin{equation*}
\limsup\limits_{m \to \infty} -\frac{1}{m} \ln \mu(\Gamma_\delta^m(J)) \geq \kappa >0.
\end{equation*}
Recall that $T$ is fast decaying for some fast decay constant $\alpha_1>0$. By Theorem \ref{thm:HD}, we get\footnote{The sequence $(\Gamma_\delta^m(J))_m$ is decreasing, so that $\liminf\limits_{m \to \infty} \Gamma_\delta^m(J)=\bigcap\limits_{m\geq 0} \Gamma_\delta^m(J)$.}
\begin{equation}\label{esthd}
\mathrm{HD}\left(\bigcap\limits_{m\geq 0} \Gamma_\delta^m(J)\right) \leq d-1-\min(\kappa,\alpha_1).
\end{equation}

For every $n \geq 0$, the same reasoning remains true for $T^{-n}\left( \bigcap\limits_{m \geq 0} \Gamma_\delta^m(J)\right)$. Indeed, since $T$ leaves $\mu$ invariant, we get as before
\begin{equation*}
\limsup\limits_{m\to \infty}-\frac{1}{m} \ln \mu(T^{-n}\left(\Gamma_\delta^m(J)\right)) \geq \kappa,
\end{equation*}
and therefore,
\begin{equation}\label{esthd2}
\mathrm{HD}\left(\bigcap\limits_{m\geq 0}T^{-n}\left(\Gamma_\delta^m(J)\right)\right) \leq d-1-\min(\kappa,\alpha_1).
\end{equation}

As in Subsection \ref{genresup1}, we consider $h \in H(\pi)$ and $t \in \R$ satisfying $th \not \in \Z^d$. 
Let $x=[\lambda] \in \Delta$ be such that $th \in W^s(x)$, i.e., $x \in \mathcal{E}_{t,h}$. We fix some small $\delta > 0$. By definition, $th \in W^s(x)$ means that there is $n_0=n_0(x)\geq 0$ such that for every $n \geq n_0$, there exists $c_n(x) \in \Z^d$ satisfying
\begin{equation}\label{propth}
A_n(x) \cdot th - c_n(x) \in W_\delta^s(T^n(x)).
\end{equation}

Let us denote by $J_h$ the line generated by $h$, and set $J_{n,h}(x):=A_n(x) \cdot J_h - c_n(x)$.
\begin{lem}
For any $n_1\geq n_0$, there exists $n \geq n_1$ such that $J_{n,h}(x)$ does not pass through $0$, i.e., $J_{n,h}(x)\in \mathcal{J}$.
\end{lem}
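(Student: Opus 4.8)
The plan is to argue by contradiction: suppose there exists $n_1 \geq n_0$ for which $J_{n,h}(x)$ passes through $0$ for \emph{every} $n \geq n_1$, and deduce that $th \in \Z^d$, contrary to hypothesis. The starting observation is a description of when the line $J_{n,h}(x) = \{s\,A_n(x) h - c_n(x) : s \in \R\}$ contains the origin. Since $A_n(x) h \neq 0$, this happens exactly when $c_n(x) = \sigma_n A_n(x) h$ for some $\sigma_n \in \R$. As the cocycle $(T,A)$ is integral, $A_n(x)^{-1} \in \mathrm{GL}(d,\Z)$, so applying $A_n(x)^{-1}$ to this identity gives $\sigma_n h = A_n(x)^{-1} c_n(x) \in \Z^d$. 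Hence, under our assumption, for every $n \geq n_1$ the scalar $\sigma_n$ belongs to the set $G := \{\sigma \in \R : \sigma h \in \Z^d\}$.

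Next I would exploit the defining membership $A_n(x) \cdot th - c_n(x) \in W_\delta^s(T^n(x)) \subset B_\delta(0)$. Substituting $c_n(x) = \sigma_n A_n(x) h$ yields $A_n(x) \cdot th - c_n(x) = (t - \sigma_n) A_n(x) h$, whence $|t - \sigma_n|\, \|A_n(x) h\| < \delta$ for all $n \geq n_1$. Two facts then close the argument. First, the direction of $J_h$ lies in $\Theta = \overline{\mathbb{P}_+^{d-1}}$ (this is the case in all our applications, e.g. $h = (1,\dots,1)$, or $h \in H(\pi) \cap \R_+^d$ for flows), so $h \in \R_+^d \setminus \{0\}$ projectivizes into $\Theta$ and \eqref{3.3} gives $\|A_n(x) h\| \to \infty$; consequently $|t - \sigma_n| \to 0$, i.e. $\sigma_n \to t$. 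Second, since $th \notin \Z^d$ we have $h \neq 0$, so choosing a coordinate $h_i \neq 0$ shows $G \subset h_i^{-1}\Z$ is a discrete subgroup of $\R$. As each $\sigma_n \in G$ and $\sigma_n \to t$, and $G$ is closed, the limit $t$ lies in $G$ and the sequence is eventually constant equal to $t$; in particular $th = \sigma_n h \in \Z^d$ for $n$ large, which is the desired contradiction.

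The integrality and discreteness steps are routine; the substantive point, and the only place where the dynamics really enters, is the claim $\|A_n(x) h\| \to \infty$. I expect this to be the main obstacle: it rests on the expansion property \eqref{3.3} of the adapted cocycle, which is available precisely because the relevant line $J_h$ points into the positive cone $\Theta$, so that the nonnegativity of the matrices $A_n(x)$ together with the growth of their entries (see \cite{MMY}, \textsection 1.2.3--1.2.4) force the norms to blow up. Were one to consider a direction $h$ \emph{not} projectivizing into $\Theta$, \eqref{3.3} would no longer apply and one would instead have to rule out $h$ lying in the stable Oseledets subspace of the cocycle; but for the vectors relevant to weak mixing this positivity is exactly what is guaranteed, so I would simply record the hypothesis that $J_h \in \mathcal{J}$ (equivalently, that $h$ points into $\Theta$) and invoke \eqref{3.3} directly.
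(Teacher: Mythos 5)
Your proof is correct and is essentially the paper's own argument in different bookkeeping: both proceed by contradiction and combine integrality of the cocycle (so that $A_n(x)^{-1}\in\mathrm{SL}(d,\Z)$) with the expansion property \eqref{3.3} to force $th\in\Z^d$. Where you scalarize — writing $c_n(x)=\sigma_n A_n(x)h$, deducing $\sigma_n\to t$ from $|t-\sigma_n|\,\|A_n(x)h\|<\delta$, and invoking discreteness of $\{\sigma\in\R:\sigma h\in\Z^d\}$ — the paper instead pulls back by $A_{n-n_1}(T^{n_1}(x))^{-1}$ (whose image of $A_n(x)\cdot th-c_n(x)$ is exactly $(t-\sigma_n)A_{n_1}(x)h$ in your notation) and uses that a sum of a fixed small vector and integer vectors can only tend to $0$ if both vanish; these are the same mechanism, and your explicit hypothesis that $h$ projectivizes into $\Theta$ is precisely what the paper uses implicitly when it asserts $J_{n,h}(x)\in\Theta$ in order to apply \eqref{3.2} and \eqref{3.3}.
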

\begin{proof}
Fix $n_1 \geq n_0$. Assume by contradiction that $J_{n,h}(x)$ passes through $0$ for all $n \geq n_1$. Since from (\ref{3.2}) and (\ref{3.3}), the matrices $A_n(x)$ expand $\Theta=\overline{\mathbb{P}_+^{d-1}}$, and because $J_{n,h}(x) \in \Theta$ with $\|J_{n,h}(x)\|<\delta$, we obtain
\begin{equation}\label{aux2}
    \lim\limits_{n \to \infty}\left\|A_{n-n_1}(T^{n_1}(x))^{-1}(A_n(x) \cdot th - c_n(x))\right\| = 0.
\end{equation}
But 
\begin{align*}
&A_{n-n_1}(T^{n_1}(x))^{-1}(A_n(x) \cdot t h - c_n(x))\\ &= A_{n_1}(x) \cdot t h - A_{n-n_1}(T^{n_1}(x))^{-1} \cdot c_n(x)\\
&= \underbrace{A_{n_1}(x) \cdot t h - c_{n_1}(x)}_{\|\cdot\|<\delta,\ \mathrm{constant\ w.r.t.\ }n}+\underbrace{c_{n_1}(x)+A_{n-n_1}(T^{n_1}(x))^{-1}\cdot c_n(x)}_{\in\ \Z^d}
\end{align*} so from (\ref{aux2}), $c_{n_1}(x)+A_{n-n_1}(T^{n_1}(x))^{-1}\cdot c_n(x)=0$ and $A_{n_1}(x) \cdot t h - c_{n_1}(x)=0$, whence $t h=(A_{n_1}(x))^{-1}\cdot c_{n_1}(x) \in \Z^d$ (since $A_{n_1}(x) \in \mathrm{SL}(d,\Z)$), a contradiction.
\end{proof}

Let $x=[\lambda] \in \mathcal{E}_{t,h}$. By the previous lemma, there are infinitely many $n \geq n_0(x)$ such that $J_{n,h}(x)\in \mathcal{J}$. Let us choose such an $n$.
By (\ref{propth}), we also know that $J_{n,h}(x) \cap W_\delta^s(T^n(x)) \neq \emptyset$. From the definition introduced in (\ref{defgamma}), we get $T^n(x) \in \Gamma_\delta^m(J_{n,h}(x))$, for any $m \geq 0$, that is,
\begin{equation*}
x \in \bigcap\limits_{m\geq 0}
T^{-n}(\Gamma_\delta^m(J_{n,h}(x))).
\end{equation*}
Let us define $\mathcal{J}_h:=\{M \cdot J_h - c\ \vert\ (M,c) \in \mathrm{SL}(d,\Z) \times \Z^d\} \cap \mathcal{J}$. By definition, we have that for every $n \geq 0$, $J_{n,h}(x)$ belongs to the countable set $\mathcal{J}_h$. We have thus obtained
\begin{equation*}
\bigcup\limits_{t \in \R,\ th \not \in \Z^d}\mathcal{E}_{t,h}\subset \mathcal{E}_h:=\bigcup\limits_{n\geq 0,\ J \in \mathcal{J}_h} \bigcap\limits_{m\geq 0} 
T^{-n}(\Gamma_\delta^m(J)).
\end{equation*}
Moreover, we deduce from estimate (\ref{esthd2}) that
$$
\mathrm{HD}(\mathcal{E}_h) \leq d-1-\min(\kappa,\alpha_1)< d-1.
$$\qed

\subsection{Weak mixing for flows: proof of Theorem \ref{wmixt}}\label{proofweakmixtransl}

We consider the cocycle $(T,A)$ introduced in Subsection \ref{sect flows trans}. Recall that by Subsection \ref{rauzyfast}, we know that there exists a probability measure $\mu$ for which $T$ has the property of bounded distortion; we denote by $\{\Delta^{(l)}\}_{l \in \mathbb{Z}}$ the associate partition. Moreover, the cocycle $(T,A)$ is fast decaying.
Let $L>0$ be the maximal Lyapunov exponent, i.e., $L:=\lim_{n \to +\infty} \frac{1}{n} \int_\Delta \ln \|A_n(x)\| d\mu (x)$. In the next proposition, we estimate the measure of the set of points with anomalous Lyapunov behavior, that is, those for which the iterates of the cocycle $(T,A)$ grow too fast. 
This large deviations result follows from fast decay of the cocycle and almost independence, which itself is implied by bounded distortion.
\begin{prop}\label{prop anom lyap behh}
For any $\widetilde L>L$, there exist $\widetilde C,\xi>0$ such that for every $n \geq 0$, 
$$
\mu(\{x \in \Delta\ \vert\ \|A_n(x)\|\geq e^{\widetilde L n}\})\leq \widetilde C e^{-\xi n}.
$$
\end{prop}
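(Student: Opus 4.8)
The plan is to deduce the operator-norm bound from a large deviations estimate for a \emph{fixed} vector, and then to obtain the latter by a Chernoff (moment) argument, in the spirit of Theorem \ref{thm:LD_for_expansion} of \cite{AD}. First I would reduce to finitely many directions. Fixing an orthonormal basis $(e_j)_j$ of the space on which the cocycle acts, there is a finite constant $C_0$, depending only on the dimension, such that $\|A_n(x)\|\le C_0\max_j\|A_n(x)\cdot e_j\|$; since $C_0$ is subexponential in $n$, for $n$ large one has
\begin{equation*}
\{x\in\Delta\ \vert\ \|A_n(x)\|\ge e^{\widetilde L n}\}\subset\bigcup_{j}\{x\in\Delta\ \vert\ \|A_n(x)\cdot e_j\|\ge e^{L' n}\},\qquad L':=\tfrac{1}{2}(L+\widetilde L).
\end{equation*}
As $L'>L$, it suffices, after a union bound over the finitely many directions, to prove that for each fixed unit vector $v$ there are $\widetilde C,\xi>0$ with $\mu\{x\ \vert\ \|A_n(x)\cdot v\|\ge e^{L'n}\}\le\widetilde C e^{-\xi n}$. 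This is an upper large deviation above the top Lyapunov exponent $L$, and crucially it involves no $\varepsilon$-net on the sphere, the passage from the operator norm to columns being purely algebraic.

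For a fixed $v$ I would run a Chernoff estimate: for $s>0$,
\begin{equation*}
\mu\{x\ \vert\ \|A_n(x)\cdot v\|\ge e^{L'n}\}\le e^{-sL'n}\int_\Delta\|A_n(x)\cdot v\|^s\,d\mu(x).
\end{equation*}
Fast decay of $A$ guarantees that the single-step exponential moment $\int\|A(x)\|_0^s\,d\mu$ is finite for $s$ small, hence so is the integral above. To extract the sharp rate I would decompose time into blocks of length $m$, writing $n=qm+r$ and using submultiplicativity $\|A_n(x)\cdot v\|\le\|A_r(T^{qm}x)\|\prod_{i=0}^{q-1}\|A_m(T^{im}x)\|$. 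The role of the block length is that $\tfrac1m\int_\Delta\ln\|A_m\|\,d\mu\to L$ by the Furstenberg--Kesten theorem (log-integrability being ensured by fast decay), so $m$ can be chosen with $\tfrac1m\int_\Delta\ln\|A_m\|\,d\mu$ as close to $L$ as desired. Bounded distortion of $T$ (Lemma \ref{boundddd}), i.e.\ the almost independence of successive blocks, then lets one factorize the exponential moment up to a multiplicative constant per block, yielding
\begin{equation*}
\frac1n\ln\int_\Delta\|A_n(x)\cdot v\|^s\,d\mu(x)\le\frac{\ln K}{m}+\frac1m\ln\int_\Delta\|A_m(x)\|^s\,d\mu(x)+o(1).
\end{equation*}

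The main obstacle is to make the two parameters cooperate so as to reach the \emph{true} exponent $L$ rather than the crude bound $\int_\Delta\ln\|A\|_0\,d\mu\ge L$. Since the slope of $s\mapsto\tfrac1m\ln\int_\Delta\|A_m\|^s\,d\mu$ at $s=0$ equals $\int_\Delta\ln\|A_m\|\,d\mu\approx mL$, for each fixed $m$ the right-hand side above is $<sL'$ once $s$ is small; but the term $\tfrac{\ln K}{m}$ coming from distortion forces $m$ to be large, and one must keep a window of admissible $s$ open by controlling the second-order (variance) term of the cumulant generating function \emph{uniformly in} $m$. Concretely, what is needed is a sub-Gaussian-type bound $\int_\Delta\|A_m\|^s\,d\mu\le\exp\!\big(s\!\int_\Delta\ln\|A_m\|\,d\mu+O(s^2 m)\big)$ with a constant independent of $m$, which is precisely the concentration furnished by combining fast decay (uniform exponential moments of the increments) with bounded distortion (weak dependence). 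This is the quantitative heart of the argument and is exactly the mechanism underlying Theorem \ref{thm:LD_for_expansion}; I would therefore either invoke that machinery of \cite{AD} directly or reproduce its proof, after which choosing $m$ large and then $s$ suitably small produces a positive decay rate $\xi=sL'-\big(\tfrac{\ln K}{m}+\tfrac1m\ln\int_\Delta\|A_m\|^s\,d\mu\big)>0$, whence the claim.
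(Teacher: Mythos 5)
Your skeleton --- Chernoff bound, decomposition into blocks via submultiplicativity, a factor $K$ per block from bounded distortion, and the Furstenberg--Kesten choice of block length so that $\frac1m\int_\Delta\ln\|A_m\|\,d\mu$ approaches $L$ --- is the same skeleton as the paper's proof, and you correctly locate the crux: for each fixed $m$ the Chernoff gain $s(L'-L_m)$ vanishes as $s\to 0$ while the distortion cost $\frac{\ln K}{m}$ does not, so you need second-order control of $s\mapsto\frac1m\ln\int_\Delta\|A_m\|^s\,d\mu$ \emph{uniformly in} $m$. The gap is that this uniform sub-Gaussian estimate is asserted rather than proved, and the attribution is wrong: it is not ``precisely the concentration furnished'' by fast decay plus bounded distortion, and it is not what Theorem \ref{thm:LD_for_expansion} provides. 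That theorem bounds the measure of points where $\|B_n(x)\cdot v\|$ grows too \emph{slowly} (below the expansion constant) for a genuine cocycle $B$. Your quantity $\ln\|A_m\|$ is subadditive but is not a Birkhoff sum, so neither $\|A_m\|$ nor $\|A_m\|^{-1}$ is a one-dimensional cocycle over $T$, and the conversion trick of Lemma \ref{lemmalargedev} (upper deviations of a Birkhoff sum $S_m$ rewritten as lower deviations of the cocycle $e^{-S_m}$) is unavailable here. Proving your estimate would require a Bernstein-type martingale argument with sub-exponential increments under weak dependence --- machinery of at least the same weight as the proposition itself; indeed your claimed bound is essentially equivalent to the proposition.

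The paper arranges the same ingredients so that no uniform-in-$m$ variance control is ever needed, and this is the real difference. It fixes \emph{one} block length $n_0$ and proves the per-block moment bound uniformly over all \emph{conditional} measures: writing $I(x,n)=\frac1n\ln\|A_n(x)\|$, one has $\int_\Delta (I(x,n)-\widetilde L)\,d\mu^{\underline{l}}(x)\le-\theta<0$ for all $\underline{l}$ and all $n\ge n_0$ (this uses $L^1$-convergence in Kingman's theorem together with $K^{-1}\le d\mu^{\underline{l}}/d\mu\le K$), whence $\mathscr{F}_{n_0,\underline{l}}(\delta_0)\le e^{-\xi_0 n_0}$ for some small $\delta_0$ and \emph{every} $\underline{l}$. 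Because the bound holds for every conditional measure, the Markov property $\mu(\Delta^{\underline{l}\,\underline{l}'})=\mu(\Delta^{\underline{l}})\,\mu^{\underline{l}}(\Delta^{\underline{l}'})$ makes the blocks telescope exactly, $\mathscr{F}_{n+n_0}(\delta_0)\le e^{-\xi_0 n_0}\mathscr{F}_{n}(\delta_0)$, with no $\ln K$ loss per block, and Markov's inequality concludes. If you insist on invoking \cite{AD} as a black box, it can be done, but differently from what you describe: apply Theorem \ref{thm:LD_for_expansion} to the one-dimensional locally constant cocycle $B^{(m)}(x):=\|A_m(x)\|^{-1}$ over $U:=T^m$ (which still has bounded distortion, and $B^{(m)}$ inherits fast decay), whose expansion constant is $-\int_\Delta\ln\|A_m\|\,d\mu$; combining the resulting lower-deviation bound with the submultiplicative inequality $\ln\|A_{qm}(x)\|\le\sum_{i=0}^{q-1}\ln\|A_m(T^{im}x)\|$ and a block length $m$ chosen so that $\frac1m\int_\Delta\ln\|A_m\|\,d\mu<\widetilde L$ yields the proposition with no sub-Gaussian input at all.
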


\begin{proof}
For any $(x,n) \in \Delta \times \mathbb{N}$, we set $I(x,n) := \frac{1}{n} \ln\|A_n(x)\|$; similarly, for any word $\underline{l}\in \Omega$ of length $n$, we set $I(\underline{l}):=\frac{1}{n} \ln\|A^{\underline{l}}\|$. By definition of $L$ and $\widetilde L$, and by bounded distortion (see Subsection \ref{backstrong} for the definition), there exist $n_0 \geq 0$ and $\theta>0$  such that for any $\underline{l} \in \Omega$, any $n \geq n_0$, we have $\int_\Delta (I(x,n)-\widetilde L)d\mu^{\underline{l}}(x) \leq -\theta$. Given an integer $n \geq 0$ and a word $\underline{l}\in \Omega$, let us consider the function $\mathscr{F}_{n,\underline{l}}\colon t \mapsto \int_\Delta e^{tn (I(x,n)-\widetilde L)}d\mu^{\underline{l}}(x)$ (we just write $\mathscr{F}_{n}$ when $\underline{l}=\emptyset$); it is well defined by fast decay of $A$. Moreover, we have $\mathscr{F}_{n_0,\underline{l}}(0)=1$ and $\mathscr{F}'_{n_0,\underline{l}}(0) \leq -n_0\theta<0$. Therefore there exist $\delta_0,\xi_0>0$ such that for any $\underline{l}\in \Omega$, we have $\mathscr{F}_{n_0,\underline{l}}(\delta_0) \leq e^{-\xi_0 n_0}$ and for any $0\leq n< n_0$,  $\mathscr{F}_{n,\underline{l}}(\delta_0) \leq 2 e^{-\xi_0 n}$. By definition, for any two words $\underline{l},\underline{l}'$, we have $\mu(\Delta^{\underline{l}\underline{l}'}) =T_*^{\underline{l}}(\restriction{\mu}{\Delta^{\underline{l}}})(\Delta^{\underline{l}'}) = \mu (\Delta^{\underline{l}}) \mu^{\underline{l}} (\Delta^{\underline{l}'})$. Besides, for any $x \in\Delta$ and integers $n,n' \geq 0$, we have $(n+n')I(x,n+n')\leq n I(x,n) + n' I(T^n(x),n')$. Let $n\geq 0$ be any integer. We deduce that 
\begin{align*}
\mathscr{F}_{n+n_0}(\delta_0)&=\sum_{|\underline{l}|=n,|\underline{l}'|=n_0}  e^{\delta_0(n+n_0) (I(\underline{l} \underline{l}')-\widetilde L)}d\mu(\Delta^{\underline{l} \underline{l}'})\\
&\leq \sum_{|\underline{l}'|=n_0}  e^{\delta_0 n_0 (I( \underline{l}')-\widetilde L)}d\mu^{\underline{l}}(\Delta^{ \underline{l}'})\sum_{|\underline{l}|=n}  e^{\delta_0 n (I( \underline{l})-\widetilde L)}d\mu(\Delta^{ \underline{l}})\\
&\leq \sup_{|\underline{l}|=n}\mathscr{F}_{n_0,\underline{l}} (\delta_0)\cdot \mathscr{F}_{n} (\delta_0) \leq e^{-\xi_0 n_0} \mathscr{F}_{n} (\delta_0).
\end{align*}
Therefore, for any $n \geq 0$, considering the euclidean division $n=kn_0+r$ of $n$ by $n_0$, with $k\geq 0$ and $0 \leq r < n_0$, we obtain $\mathscr{F}_{n}(\delta) \leq  e^{-k \xi_0 n_0} \cdot 2 e^{-\xi_0 r} \leq 2 e^{-\xi_0 n}$. We conclude by noting that $\mu\{x \in \Delta\ \vert\ \|A_n(x)\|\geq e^{\widetilde L n}\} \leq \mathscr{F}_{n}(\delta_0)\leq 2 e^{-\xi_0 n}$.
\end{proof}

We now come to the proof of Theorem \ref{wmixt}. Fix $0<\delta < 1/10$ sufficiently small. With the notations introduced in Subsection \ref{sect flows trans}, let $(h,[\lambda]) \in \mathcal{F}$; we denote $J_h:=\mathrm{Span}(h)$, and $\mathcal{J}_h:=\{M \cdot J_h - c\ \vert\ (M,c) \in \mathrm{SL}(d,\Z) \times \Z^d\} \cap \mathcal{J}$. As previously, we have
\begin{equation}\label{eqlambdaa}
[\lambda] \in \bigcup\limits_{n\geq 0,\ J \in \mathcal{J}_h}  \bigcap\limits_{m \geq 0}T^{-n}\left( \Gamma_{\delta/2}^m(J)\right).
\end{equation}

Fix $n\geq 0$, and take $(M,c) \in \mathrm{SL}(d,\Z) \times \Z^d$ such that $J=J_{M,c}(h):=M \cdot J_{h} -c \in \mathcal{J}_h$. We know from Proposition \ref{mainresultprob} that there exist constants $C,\kappa,\rho>0$ such that for any $m \geq 0$, $\mu\left(T^{-n}\left( \Gamma_{\delta}^m(J)\right)\right) \leq C e^{-\kappa m} \|J\|^{-\rho}$. Recall that we denote by $L$ the maximal Lyapunov exponent of $(T,A)$. Given $L'>NL$, let $X_m:=\{x \in \Delta\ \vert\ \|A_{mN}(x)\|\geq e^{L' m}\}$; by Proposition \ref{prop anom lyap behh}, we  know that $\mu(T^{-n}(X_m)) \leq \widetilde C e^{-\xi {m}}$ for certain constants $\widetilde C,\xi>0$. Then the proof of Theorem \ref{thm:HD} given in \cite{AD}  provides us with constants $C',\delta',K >0$ together with a cover $\{B^{m,n, M,c,h}_{k}\}_{k \geq 0}$ of $T^{-n}\left( \Gamma_{\delta}^m(J)\cup X_{m}\right)$ by balls of
diameter at most $e^{-C' m}$, satisfying
\begin{equation}\label{HD1}
\sum_{k \geq 0} \mathrm{diam}(B^{m,n,M,c,h}_{k})^{d-1-\delta'} \leq K.
\end{equation}

Recall that $\Gamma^m_{\delta}(J):=\{x \in \Delta,\ J\cap W_{\delta,mN}^s(x) \neq \emptyset\}$ is the set of points $x \in \Delta$ for which the process introduced earlier lasts for at least $m$ steps. By definition, $x \in T^{-n}(\Gamma_{\delta}^m(J))$ if and only if there exists $w \in J \cap B_{\delta}(0)$ such that 
$$
\|A_k(T^n(x)) \cdot w\|_{\R^d/\Z^d} < \delta,\quad \forall k \leq mN.
$$

If instead of $J$, we start the process with a line $J'$ close to $J$, we see that the first iterates of $J' \cap B_{\delta/2}(0)$ under the cocycle remain close to those of $J \cap B_{\delta/2}(0)$. More precisely, given  $0<\varepsilon<\delta/2 \cdot e^{-L' m}$, assume that the line $J'$ is $\varepsilon-$close to $J$ in the sense that for any $w' \in J' \cap B_{\delta/2}(0)$, $\|\pi_{J}(w') - w'\| < \varepsilon$, where $\pi_{J}$ denotes the orthogonal projection on $J$. Let $x' \in T^{-n}(\Gamma_{\delta/2}^m(J'))$; then there exists $w' \in J' \cap B_{\delta/2}(0)$ such that $\|A_k(T^n(x')) \cdot w'\|_{\R^d/\Z^d} < \delta/2$, for any $k \leq mN$.
Set $w:=\pi_{J}(w') \in J\cap B_{\delta}(0)$. Since $k \mapsto \|A_k(T^n(x'))\|$ is increasing, we deduce that either $x' \in T^{-n}(X_m)$, or for any $0\leq k \leq mN$,
$$
\|A_k(T^n(x')) \cdot w\|_{\R^d/\Z^d} < \delta/2 + \|A_k(T^n(x'))\| \varepsilon<\delta/2(1+ \|A_k(T^n(x'))\| e^{-L' m})<\delta,
$$
hence $x'\in T^{-n}(\Gamma_{\delta}^m(J))$. 
Therefore, 
there exists $C''>0$ depending only on $\delta, L'$ such that for each $h' \in H(\pi)$ with $d(h,h') < e^{-C'' m}$, and if $J':=J_{M,c}(h')$ denotes the corresponding line, then the same cover will work for the ``bad'' parameters associated with $J'$, that is, 
$$T^{-n}\left( \Gamma_{\delta/2}^m(J')\right) \subset \bigcup_k B^{m,n,M,c,h}_{k},$$
and $[\lambda'] \in T^{-n}\left( \Gamma_{\delta/2}^m(J')\right)$ whenever $(h',[\lambda']) \in \mathcal{F}$.

Let us choose a cover $\{U_j\}_{j\geq 0}$ of $H(\pi)\sim \mathbb{R}^{2g}$ by open balls of diameter less than $1$.  For any $j,m \geq 0$, we take a countable collection $\{h_{m,p}^j\}_{p \geq 0} \subset U_j$ such that $U_j \subset \cup_p B(h_{m,p}^j,\delta_{m,p}^j)$, where $0<\delta_{m,p}^j\leq e^{-C'' m}$ for all $p \geq 0$, and such that for some $0<K'<+\infty$, we have
\begin{equation}\label{eq 2g HD}
\sum_{p \geq 0} (\delta_{m,p}^{j})^{2g} \leq K'.
\end{equation}
From the previous discussion, we may also assume that there exists a constant $0 < K <+\infty$ such that for any integers $j,m,p,n\geq 0$, 
any $M\in \mathrm{SL}(d,\mathbb{Z})$, $c\in \Z^d$, there exists
an open cover $\{B_k^{m,n,M,c,h_{m,p}^j}\}_{k\geq 0}$ of the set $T^{-n}(\Gamma_{\delta}^m(J_{M,c}(h_{m,p}^j))\cup X_m)$ by balls of diameter at most $e^{-C' m}$, which satisfies 
\begin{equation}\label{HD12}
\sum_{k\geq 0} \mathrm{diam}(B^{m,n,M,c,h_{m,p}^j}_{k})^{d-1-\delta'} \leq K,
\end{equation} 
and such that for any $(h,[\lambda]) \in \mathcal{F}$, we have: 
\begin{itemize}
\item $h \in U_j$ for some $j \geq 0$, and for any $m \geq 0$, there exists $p \geq 0$ such that $h \in B(h_{m,p}^j,\delta_{m,p}^j)$.
\item There exist $n\geq 0$, $M\in \mathrm{SL}(d,\mathbb{Z})$, $c\in \Z^d$ such that 
 $J_{M,c}(h) \in \mathcal{J}_h$ and for any $m\geq 0$,  $[\lambda] \in T^{-n}(\Gamma_{\delta/2}^m(J_{M,c}(h)))\subset \cup_k B_k^{m,n,M,c,h_{m,p}^j}$.
\end{itemize} 
For each $j,m,n,M,c,p,k$ as above, set $r_{k}^{j,m,n,M,c,p}:=\mathrm{diam}(B^{m,n,M,c,h_{m,p}^j}_{k})$, and take a cover $\{\mathcal{B}_l^{j,m,n,M,c,p,k}\}_{l}$ of $B(h_{m,p}^j,\delta_{m,p}^j)$ by $O\left(\left(\frac{\delta_{m,p}^j}{r_{k}^{j,m,n,M,c,p}}\right)^{2g}\right)$ many balls of diameter $r_{k}^{j,m,n,M,c,p}$. We get
\begin{equation}\label{Funion2}
\mathcal{F} \subset \bigcup_{j,n,M,c} \bigcap_{m \geq 0} \left(\bigcup_{p,k,l} \mathcal{B}_l^{j,m,n,M,c,p,k} \times B^{m,n,M,c,h_{m,p}^j}_{k} \right).
\end{equation}
For any $j,n,M,c$, and for every integer $m \geq 0$, let us denote $Y_{m}^{j,n,M,c}:=\bigcup_{p,k,l}\mathcal{B}_l^{j,m,n,M,c,p,k} \times B^{m,n,M,c,h_{m,p}^j}_{k} $. We deduce from \eqref{eq 2g HD}  and \eqref{HD12} that
\begin{align*}
\sum_{p,k,l} \mathrm{diam}(\mathcal{B}_l^{j,m,n,M,c,p,k} \times B^{m,n,M,c,h_{m,p}^j}_{k})^{2g+d-1-\delta'} &\lesssim \sum_{p,k} \left(\frac{\delta_{m,p}^j}{r_{k}^{j,m,n,M,c,p}}\right)^{2g} (r_k^{j,m,n,M,c,p})^{2g+d-1-\delta'}\\ 
&= \sum_{p} (\delta_{m,p}^j)^{2g} \times \sum_k (r_k^{j,m,n,M,c,p})^{d-1-\delta'}\\ 
&\leq K K'.
\end{align*}
We deduce that $\mathrm{HD}\left(\bigcap_{m \geq 0} Y_m^{j,n,M,c}\right) \leq 2g + d-1-\delta'$. But from \eqref{Funion2}, we know that $\mathcal{F}$ is contained in a countable union of such sets, so that $\mathrm{HD}(\mathcal{F}) < 2g+d-1$, which ends the proof.\qed

\end{document}